\documentclass{amsart}

\usepackage[utf8]{inputenc}
\usepackage[english]{babel}

\usepackage{braket}
\usepackage{amsmath}
\usepackage{amstext}
\usepackage{amssymb}
\usepackage{amsthm}
\usepackage{mathtools}
\usepackage{stmaryrd}
\usepackage{mathrsfs}
\usepackage{extarrows}

\usepackage{microtype}

\usepackage[retainorgcmds]{IEEEtrantools}
\usepackage{graphicx}
\usepackage{amsfonts}

\usepackage{units}
\usepackage{enumerate}
\usepackage{url}

\usepackage{tikz}
\usetikzlibrary{matrix,arrows,calc}
\usepackage{tikz-cd}

\usepackage{hyperref}

\usepackage[margin=1cm]{caption}

\theoremstyle{definition}
\newtheorem{theorem}{Theorem}[section]
\newtheorem{definition}[theorem]{Definition}
\newtheorem{remark}[theorem]{Remark}
\newtheorem{lemma}[theorem]{Lemma}
\newtheorem{proposition}[theorem]{Proposition}
\newtheorem{corollary}[theorem]{Corollary}
\newtheorem{conjecture}[theorem]{Conjecture}

\newcommand{\mb}[1]{\mathbb{#1}}
\newcommand{\mc}[1]{\mathcal{#1}}

\newcommand{\N}{\mb N}
\newcommand{\Z}{\mb Z}

\newcommand{\F}{\mc F}

\newcommand{\D}{\mathcal D}
\newcommand{\M}{\mathcal M}

\newcommand{\sal}{\text{Sal}}

\newcommand{\bsal}{\overline{\sal}}

\newcommand{\h}[1]{\smash{\widehat{#1}}}

\newcommand{\leqr}{\preceq_{\,\text{R}}}
\newcommand{\leql}{\preceq_{\,\text{L}}}

\newcommand{\pr}[3]{{\Pi}(#1,#2,#3)}
\newcommand{\prp}[3]{{\Pi'}(#1,#2,#3)}
\newcommand{\product}{\xi}

\DeclareMathOperator{\id}{id}
\DeclareMathOperator{\rev}{rev}

\newcommand{\xstar}{X^{(*)}}
\newcommand{\ystar}{Y^{(*)}}
\newcommand{\zstar}{Z^{(*)}}

\renewcommand{\sf}{S^f}

%
\usepackage[foot]{amsaddr}

\title{On the classifying space of Artin monoids}
\author{Giovanni Paolini}
\address{Scuola Normale Superiore, Piazza dei Cavalieri 7, 56126 Pisa (Italy)}
\email{giovanni.paolini@sns.it}

\begin{document}

\begin{abstract}
  A theorem proved by Dobrinskaya in 2006 shows that there is a strong connection between the $K(\pi,1)$ conjecture for Artin groups and the classifying spaces of Artin monoids.
  More recently Ozornova obtained a different proof of Dobrinskaya's theorem based on the application of discrete Morse theory to the standard CW model of the classifying space of an Artin monoid.
  In Ozornova's work there are hints at some deeper connections between the above-mentioned CW model and the Salvetti complex, a CW complex which arises in the combinatorial study of Artin groups.
  In this work we show that such connections actually exist, and as a consequence we derive yet another proof of Dobrinskaya's theorem.
\end{abstract}

\maketitle

\noindent {\bf Keywords:} Artin groups, Artin monoids, Discrete Morse theory, Coxeter groups.

\vskip0.2cm

\noindent {\bf 2000 Mathematics Subject Classification:} 20F36, 55R35, 55U10, 52C35.

\vskip0.2cm

The Version of Record of this manuscript has been published and is available in \emph{Communications in Algebra}
(12 Apr 2017) \url{http://www.tandfonline.com/10.1080/00927872.2017.1281931}.

\section{Introduction}

The beginning of the study of Artin groups dates back to the introduction of braid groups in the 20s.
Artin groups where defined in general by Tits and Brieskorn in the 60s, in relation to the theory of Coxeter groups and singularity theory.
Since then some of their general properties were studied (cf. \cite{brieskorn1972artin}) but many questions remain open.

One of these questions is the so called $K(\pi,1)$ conjecture, which states that a certain space $N(A)$ with the homotopy type of a finite CW complex is a classifying space for the corresponding Artin group $A$.
Such conjecture was proved for some particular families of Artin groups (see \cite{charney1995k, callegaro2010k, deligne1972immeubles, hendriks1985hyperplane, okonek1979dask}) but not in the general case.

Every Artin group $A$ has a special submonoid $A^+$, called Artin monoid, whose group of fractions is $A$ itself.
In 2006 Dobrinskaya \cite{dobrinskaya2006configuration} proved that the $K(\pi,1)$ conjecture holds for an Artin group $A$ if and only if the natural map $BA^+\to BA$ between the classifying spaces of $A^+$ and $A$ is a homotopy equivalence.
This revealed an interesting connection between the $K(\pi,1)$ conjecture and the geometry beneath Artin monoids.
A more concrete proof of the same theorem was given more recently by Ozornova \cite{ozornova2017discrete}, using the technique of discrete Morse theory (cf. \cite{forman1998morse, batzies2002discrete}).
This new proof gave some hints, and left some open questions, about a further connection between the standard CW model for $BA^+$ and the Salvetti complex, a finite CW model for the space $N(A)$ introduced by Salvetti \cite{salvetti1987topology, salvetti1994homotopy}.
In this work we will explore such connection, showing that the standard CW model for $BA^+$ can be collapsed (in the sense of discrete Morse theory) to obtain a CW complex which is naturally homotopy equivalent to the Salvetti complex.
This result gives another proof of Dobrinskaya's theorem.
In addition, it implies that the algebraic Morse complex constructed in \cite{ozornova2017discrete} coincides with the algebraic complex which computes the cellular homology of the Salvetti complex.

\section{Preliminaries}

In this section we are going to recall some concepts and known results concerning Coxeter and Artin groups, Artin monoids, the Salvetti complex, the $K(\pi,1)$ conjecture, and discrete Morse theory.

\subsection{Coxeter and Artin groups}
Let $S$ be a finite set, and let $M = (m_{s,t})_{s,t\in S}$ be a square matrix indexed by $S$ and satisfying the following properties:
\begin{itemize}
 \item $m_{s,t} \in \{2,3,\dots \}\cup\{\infty\}$ for all $s\neq t$, and $m_{s,t}=1$ for $s=t$;
 \item $m_{s,t}=m_{t,s}$, i.e.\ $M$ is symmetric.
\end{itemize}
Such a matrix is called a \emph{Coxeter matrix}.
From a Coxeter matrix $M$, we define the corresponding \emph{Coxeter group} as follows:
\[ W = \langle\, S \mid (st)^{m_{s,t}}=1 \;\; \forall\, s,t\in S \text{ such that } m_{s,t}\neq \infty \,\rangle. \]
Given a subset $T\subseteq S$, let $W_T$ be the subgroup of $W$ generated by $T$. This is also a Coxeter group, with the natural structure deriving from the Coxeter matrix $M|_{T\times T}$ \cite{bourbaki1968elements}.
Also, define $\sf$ as the set of subsets $T$ of $S$ such that $W_T$ is finite.

Coxeter groups are naturally endowed with a length function $\ell\colon W \to \N$, namely the function that maps an element $w\in W$ to the minimal length of an expression of $w$ as a product of generators in $S$.

Since in $W$ all the generators of $S$ have order 2, the relations $(st)^{m_{s,t}}=1$ for $s\neq t$ can be also written as
\[ \pr{s}{t}{m_{s,t}} = \pr{t}{s}{m_{s,t}}, \]
where the notation $\pr{a}{b}{m}$ stands for
\[ \pr{a}{b}{m} = \begin{cases}
		    (ab)^{\frac m2} & \text{if $m$ is even}, \\
		    (ab)^{\frac{m-1}{2}}a & \text{if $m$ is odd}.
                  \end{cases} \]
For instance, if $m_{s,t}=3$ the relation $(st)^3=1$ can be written as $sts=tst$.
So we have that
\[ W = \left\langle\, S \;\left\lvert\;
  \begin{array}{ll}
    s^2=1 & \forall\, s\in S, \\
    \pr{s}{t}{m_{s,t}} = \pr{t}{s}{m_{s,t}} & \forall \,\, s,t\in S \text{ such that } m_{s,t}\neq \infty
  \end{array}
  \right. \right\rangle. \]
Consider now the set $\Sigma = \{ \sigma_s \mid s\in S \}$, which is in natural bijection with $S$, and define an \emph{Artin group} as follows.
\[ A = \langle\, \Sigma \mid
  \pr{\sigma_s}{\sigma_t}{m_{s,t}} = \pr{\sigma_t}{\sigma_s}{m_{s,t}} \; \forall \,\, s,t\in S \text{ such that } m_{s,t}\neq \infty
  \rangle. \]
Clearly, for any Coxeter matrix $M$ there is a natural projection $\pi\colon A \to W$ between the corresponding Artin and Coxeter groups, sending $\sigma_s$ to $s$ for all $s\in S$.

If $W$ is finite, then we say that $A$ is an Artin group \emph{of finite type}.

\subsection{Artin monoids}
The \emph{Artin monoid} corresponding to a Coxeter matrix $M$ is the monoid given by
  \[ A^+ = \langle\, \Sigma \mid
  \pr{\sigma_s}{\sigma_t}{m_{s,t}} = \pr{\sigma_t}{\sigma_s}{m_{s,t}} \; \forall \,\, s,t\in S \text{ such that } m_{s,t}\neq \infty
  \rangle, \]
where this is a monoid presentation (as opposed to the group presentation for $A$).
The reason why we take the freedom to use the same generating set $\Sigma$ for $A^+$ and for $A$ is given by the following theorem.

\begin{theorem}[\cite{paris2002artin}]
  The natural monoid homomorphism $A^+ \to A$ is injective.%
  \label{thm:artin-monoid-injects-in-group}
\end{theorem}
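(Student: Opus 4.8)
The plan is to reduce the theorem to the concrete statement that two positive words in the generators $\Sigma$ representing the same element of $A$ already represent the same element of $A^+$; equivalently, between two equal positive words one never needs any rewriting beyond the positive Artin relations $\pr{\sigma_s}{\sigma_t}{m_{s,t}} = \pr{\sigma_t}{\sigma_s}{m_{s,t}}$, and in particular never an insertion or deletion of a cancelling pair $\sigma_s\sigma_s^{-1}$. Throughout I would use the classical combinatorial properties of the Artin monoid, proved by elementary surgery on positive words going back to Brieskorn--Saito: $A^+$ is left and right cancellative; it carries a well-defined additive length function, hence is Noetherian; any two of its elements admit a left and a right greatest common divisor; and any two elements that admit a common multiple admit a least common one. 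These facts are the combinatorial engine of everything that follows.

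First I would settle the spherical case, which also serves as a template. If $W$ is finite then $A^+$ is a Garside monoid: it has a Garside element $\Delta$, every element left-divides a power of $\Delta$, so any two elements have a common multiple; being also cancellative, $A^+$ then satisfies the Ore condition and embeds into its group of fractions. That group has the presentation of $A$ and is generated by the image of $A^+$ with every generator inverted, hence is canonically $A$, and injectivity follows. More generally, for every $T\in\sf$ the submonoid $A_T^+$ generated by $\{\sigma_s\mid s\in T\}$ is Garside, so it embeds into its group of fractions, which one identifies with the parabolic subgroup of $A$ on those generators --- this identification being itself part of what the induction below must establish. These local embeddings are the input for that induction.

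The main obstacle is the general, non-spherical case: there $A^+$ need \emph{not} satisfy the Ore condition --- when $m_{s,t}=\infty$ the submonoid generated by $\sigma_s$ and $\sigma_t$ is free, with no common multiples at all --- so Ore's theorem is unavailable and one must argue directly. I would use the subword-reversing (complement) formalism: the positive presentation of $A$ is right complemented, and the existence of conditional least common multiples in $A^+$ is precisely the statement that the associated reversing process is \emph{complete}. Completeness gives the easy direction at once: if the reversing of $u^{-1}v$ terminates at the empty word, then $u=v$ already in $A^+$. The theorem is the converse: if $u=v$ in $A$ then this reversing, which a priori might run forever, does terminate --- necessarily at the empty word. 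This is the crux, and where I expect all the work to lie: one must show that under the hypothesis $u=v$ in $A$ every step of the reversing stays confined to some spherical parabolic submonoid $A_T^+$ with $T\in\sf$, where termination is guaranteed by the Garside case, because an infinite branch would exhibit a genuine obstruction to the equality $u=v$. Propagating the parabolic embeddings of the previous step to the whole of $A^+$ in this way --- presumably by induction on $|S|$ together with a secondary induction on word length --- is the heart of Paris's argument.
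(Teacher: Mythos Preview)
The paper does not prove this theorem: it is quoted with a citation to \cite{paris2002artin} and then used as a black box throughout. There is no proof in the paper to compare your proposal against.

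For what it is worth, your sketch also does not match Paris's actual argument. Paris's proof is representation-theoretic: he builds an explicit linear representation of $A$ (a generalisation of the Lawrence--Krammer construction to arbitrary Coxeter type) and shows, by a direct positivity argument on matrix entries, that distinct elements of $A^+$ act by distinct operators. There is no subword reversing and no induction on $|S|$ or on word length.

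Your proposed route has a genuine gap at the non-spherical step. You correctly observe that Ore fails in general, and then assert that if $u=v$ in $A$ then the reversing of $u^{-1}v$ ``stays confined to some spherical parabolic submonoid $A_T^+$'', where termination is known. But nothing in the bare hypothesis $u=v\in A$ controls which generators appear during the reversing; the process may well pass through a pair $\sigma_s,\sigma_t$ with $m_{s,t}=\infty$ and diverge. Converting equality in the group into termination of a purely monoid-level rewriting is precisely the content of the theorem, and the monoid combinatorics you list (cancellativity, gcd's, conditional lcm's) are all statements internal to $A^+$ --- they give you no leverage on an equation that holds only in $A$. Some external input about the group is required, and that is exactly what Paris's representation provides.
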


In view of Theorem \ref{thm:artin-monoid-injects-in-group}, from now on we will consider $A^+$ as contained in $A$.
The Artin monoid is also called \emph{positive monoid} of $A$, for its elements are precisely those which can be written as a product (with positive exponents) of generators in $\Sigma$.
An immediate consequence of the previous theorem is that the Artin monoid is left and right cancellative (this was originally proved in \cite{brieskorn1972artin}).

Since the relations $\pr{\sigma_s}{\sigma_t}{m_{s,t}}=\pr{\sigma_t}{\sigma_s}{m_{s,t}}$ involve the same number of generators on the left hand side and on the right hand side, there is a well-defined length function $\ell\colon A^+ \to \N$ that sends an element $\sigma_{s_1}\cdots \sigma_{s_k} \in A^+$ to the length $k$ of its representation.

\begin{definition}
  Given $\alpha,\beta\in A^+$, we say that $\alpha \leql \beta$ if there exists $\gamma\in A^+$ such that $\alpha\gamma=\beta$.
  Similarly we say that $\alpha \leqr\beta$ if there exists $\gamma\in A^+$ such that $\gamma\alpha = \beta$.
\end{definition}

If $\alpha\leql\beta$ we also say that $\alpha$ is a left divisor of $\beta$, that $\alpha$ left divides $\beta$, or that $\beta$ is left divisible by $\alpha$. We do the same for right divisibility.
Both $\leql$ and $\leqr$ are partial order relations on $A^+$ \cite{brieskorn1972artin}.

\begin{definition}
  Let $E$ be a subset of $A^+$.
  A \emph{left common divisor} of $E$ is any element of $A^+$ which left divides all elements of $E$. A \emph{greatest left common divisor} of $E$ is a left common divisor of $E$ which is left multiple of all the left common divisors of $E$.
  Similarly, a \emph{left common multiple} of $E$ is any element of $A^+$ which is left multiple of all elements of $E$, and a \emph{least left common multiple} of $E$ is a left common multiple of $E$ which a left divisor of all the left common multiples of $E$.
  Define in the obvious way the analogous concepts for right divisibility.
\end{definition}

\begin{proposition}[\cite{brieskorn1972artin}]
  When a greatest common divisor or a least common multiple exists for a set $E$, then it is unique.
\end{proposition}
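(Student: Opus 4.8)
The plan is to obtain uniqueness as a purely order-theoretic consequence of the fact, recalled just above from \cite{brieskorn1972artin}, that $\leql$ and $\leqr$ are partial orders; the only property used is antisymmetry, and no further structure of the Artin monoid is needed. First I would treat the case of a greatest left common divisor. Suppose $d_1$ and $d_2$ are both greatest left common divisors of a set $E$. Then each of $d_1$ and $d_2$ is, in particular, a left common divisor of $E$. Since $d_1$ is a greatest left common divisor, it is a left multiple of \emph{every} left common divisor of $E$; applying this to the left common divisor $d_2$ gives $d_2 \leql d_1$. Interchanging the roles of $d_1$ and $d_2$ gives $d_1 \leql d_2$. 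As $\leql$ is antisymmetric on $A^+$, this forces $d_1 = d_2$. The case of a greatest right common divisor is the same argument with $\leql$ replaced by $\leqr$.

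The least common multiple cases are dual. If $\mu_1$ and $\mu_2$ are both least left common multiples of $E$, then each is a left common multiple of $E$, and each is a left divisor of every left common multiple of $E$; applying the extremality property of $\mu_1$ to $\mu_2$, and that of $\mu_2$ to $\mu_1$, yields $\mu_1 \leql \mu_2$ and $\mu_2 \leql \mu_1$, hence $\mu_1 = \mu_2$ by antisymmetry. The right-handed version is again identical, with $\leqr$ in place of $\leql$. This covers all four cases subsumed under the statement.

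I do not expect any real obstacle here: the argument is a two-line formal deduction from the definitions and the partial-order property. If one preferred not to invoke antisymmetry as a cited black box, one could reprove it on the spot using the length function: if $\alpha \leql \beta$ and $\beta \leql \alpha$, write $\beta = \alpha\gamma$ and $\alpha = \beta\delta$ with $\gamma,\delta \in A^+$, so that $\ell(\beta) = \ell(\alpha) + \ell(\gamma)$ and $\ell(\alpha) = \ell(\beta) + \ell(\delta)$; this forces $\ell(\gamma) = 0$, hence $\gamma = 1$ (the only element of length $0$), hence $\alpha = \beta$. Since this is merely a reproof of antisymmetry of $\leql$ (and symmetrically of $\leqr$), it is cleaner to cite \cite{brieskorn1972artin} directly and keep the argument short.
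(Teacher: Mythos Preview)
Your argument is correct and is the standard one. Note, however, that the paper does not actually give a proof of this proposition: it is stated with a citation to \cite{brieskorn1972artin} and left unproved, as it is a well-known and immediate consequence of antisymmetry of $\leql$ and $\leqr$. So there is no ``paper's own proof'' to compare against; your write-up simply fills in the omitted (routine) details.
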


\begin{proposition}[\cite{brieskorn1972artin}]
  Let $E$ be a subset of $A^+$.
  If $E$ admits a left (resp.\ right) common multiple, then it also admits a least left (resp.\ right) common multiple.
  \label{prop:existence-lcm}
\end{proposition}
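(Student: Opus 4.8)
\emph{Strategy.} The plan is to reduce the claim, in several steps, to a single non-formal statement: any two \emph{generators} $\sigma_s,\sigma_t$ that admit a left common multiple admit a least left common multiple, namely $\pr{\sigma_s}{\sigma_t}{m_{s,t}}=\pr{\sigma_t}{\sigma_s}{m_{s,t}}$ (and if $m_{s,t}=\infty$ they admit no left common multiple at all). All the other steps use only left cancellativity and the length function, through two elementary remarks: (i) if $\alpha\leql\beta$ and $\ell(\alpha)=\ell(\beta)$, then the element $\gamma$ with $\alpha\gamma=\beta$ has length $0$, hence $\gamma=1$ and $\alpha=\beta$; (ii) since $\Sigma$ is finite, $A^+$ contains only finitely many elements of each given length, so the set of left divisors of any fixed element is finite.

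\emph{Two elements.} Granting the two-generator statement, I would treat a pair $\alpha,\beta$ with a left common multiple $\mu$ by induction on $\ell(\mu)$, the cases $\alpha=1$ or $\beta=1$ being trivial. If $\alpha$ and $\beta$ share a left divisor $\sigma$ of length one, write $\alpha=\sigma\alpha'$, $\beta=\sigma\beta'$, $\mu=\sigma\mu'$; cancellation gives that $\alpha',\beta'$ have the left common multiple $\mu'$ of length $\ell(\mu)-1$, so by induction they have a least left common multiple $\nu'$, and $\sigma\nu'$ is checked to be the least left common multiple of $\alpha,\beta$. Otherwise, choose generators $\sigma\leql\alpha$ and $\tau\leql\beta$; necessarily $\sigma\neq\tau$, and both left divide $\mu$, so by the two-generator statement their least left common multiple $\rho=\sigma\rho_\sigma=\tau\rho_\tau$ exists and left divides $\mu$, say $\mu=\rho\delta$. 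Writing $\alpha=\sigma\alpha'$ and $\beta=\tau\beta'$, one finds that $\{\alpha',\rho_\sigma\}$ has the left common multiple $\rho_\sigma\delta$ of length $\ell(\mu)-1$; the induction produces its least left common multiple $\theta=\rho_\sigma\theta'$, and then $\{\beta',\rho_\tau\theta'\}$ likewise has a left common multiple of length $\ell(\mu)-1$, so a second use of the induction produces its least left common multiple $\phi$, and one verifies that $\tau\phi$ is the least left common multiple of $\alpha,\beta$.

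\emph{Finite and arbitrary sets.} With the two-element case settled, finite sets follow by induction on $|E|$: for $E=\{\alpha_1,\dots,\alpha_n\}$ with left common multiple $\mu$, let $\nu$ be the least left common multiple of $\{\alpha_1,\dots,\alpha_{n-1}\}$ (it left divides $\mu$), and then the least left common multiple of $\{\nu,\alpha_n\}$ is seen to be that of $E$. For arbitrary $E$ with a left common multiple $\mu$: every least left common multiple of a finite subset of $E$ left divides $\mu$, so by remark (ii) there are only finitely many of them; pick one, $\nu$, of maximal length, say $\nu$ is the least left common multiple of a finite $F_0\subseteq E$. For each $\alpha\in E$, the least left common multiple of $F_0\cup\{\alpha\}$ is a left multiple of $\nu$ of length at most $\ell(\nu)$, hence equals $\nu$ by remark (i), so $\alpha\leql\nu$; thus $\nu$ is a left common multiple of $E$, and being the least left common multiple of $F_0$ it left divides every left common multiple of $E$. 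Hence $\nu$ is the least left common multiple of $E$. The statement for right common multiples is obtained by the symmetric argument.

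\emph{Main obstacle.} The whole weight of the proof is on the two-generator statement, i.e.\ on showing that every $\gamma\in A^+$ which is left divisible by $\sigma_s$ and by $\sigma_t$ is left divisible by $\pr{\sigma_s}{\sigma_t}{m_{s,t}}$ when $m_{s,t}\neq\infty$. This is the classical combinatorial core of the subject, established by a careful induction on word length using the braid relation $\pr{\sigma_s}{\sigma_t}{m_{s,t}}=\pr{\sigma_t}{\sigma_s}{m_{s,t}}$ together with cancellativity; I would either carry out that induction in detail or, since the proposition is anyway attributed to \cite{brieskorn1972artin}, simply cite it there.
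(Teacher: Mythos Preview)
The paper does not actually prove this proposition; it is stated with a bare citation to \cite{brieskorn1972artin} and no argument is given. So there is nothing to compare your proof against on the paper's side.

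That said, your outline is a faithful account of the classical Brieskorn--Saito argument and is correct. The reduction to finite sets via the finiteness of left divisors of a fixed element (using $|S|<\infty$) is sound, the finite-to-pair reduction is routine, and your pair argument by induction on $\ell(\mu)$ is the standard one: in the case where $\alpha$ and $\beta$ have no common length-one left divisor, you correctly use the two-generator lcm $\rho=\sigma\rho_\sigma=\tau\rho_\tau$ to split the problem into two subproblems each governed by a common multiple of strictly smaller length, and the verification that $\tau\phi$ is then the least left common multiple goes through by repeated cancellation. You are also right that the entire content lies in the two-generator case (the ``reduction lemma'' of Brieskorn--Saito), which is the only place where the specific form of the Artin relations enters; citing \cite{brieskorn1972artin} for that step is appropriate and is in fact all the paper itself does for the whole proposition.
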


\begin{proposition}[\cite{brieskorn1972artin}]
  Any non-empty subset $E$ of $A^+$ admits a greatest left common divisor and a greatest right common divisor.
\end{proposition}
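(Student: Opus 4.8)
The plan is to obtain both statements as short corollaries of Proposition~\ref{prop:existence-lcm} (existence of least common multiples). Fix a non-empty subset $E\subseteq A^+$ and choose any element $\alpha_0\in E$. Let $D\subseteq A^+$ be the set of all left common divisors of $E$; it is non-empty, since the identity of $A^+$ belongs to it. The key remark is that $D$ admits a left common multiple: every $d\in D$ satisfies $d\leql\alpha_0$ by definition of a left common divisor, so $\alpha_0$ is a left common multiple of $D$. (In fact $D$ is finite, being contained in the set of left divisors of $\alpha_0$, which is finite because $S$ is finite and $\ell$ bounds the length of any divisor of $\alpha_0$; but we will not need this.) Hence Proposition~\ref{prop:existence-lcm}, applied to the set $D$, yields a least left common multiple $\delta$ of $D$.

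It remains to check that $\delta$ is a greatest left common divisor of $E$. First, $\delta\in D$: given any $\beta\in E$, every $d\in D$ satisfies $d\leql\beta$, so $\beta$ is a left common multiple of $D$, and therefore $\delta\leql\beta$ because $\delta$ is the \emph{least} left common multiple of $D$; since $\beta\in E$ was arbitrary, $\delta$ is a left common divisor of $E$. Second, $\delta$ is by construction a left common multiple of $D$, i.e.\ $d\leql\delta$ for every left common divisor $d$ of $E$. These two properties say precisely that $\delta$ is a greatest left common divisor of $E$; its uniqueness is already recorded above (and also follows from antisymmetry of $\leql$). For greatest right common divisors one runs the identical argument with $\leqr$ in place of $\leql$ and the right-hand version of Proposition~\ref{prop:existence-lcm}; equivalently, one observes that reversing words sends the defining relators $\pr{\sigma_s}{\sigma_t}{m_{s,t}}=\pr{\sigma_t}{\sigma_s}{m_{s,t}}$ to themselves, so the opposite monoid of $A^+$ is again an Artin monoid and the left statement transfers.

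I do not anticipate a genuine obstacle here, given Proposition~\ref{prop:existence-lcm}: the argument is essentially a two-step deduction. The only point that wants a little care is the very first step — rather than trying to prove directly that the finite poset $D$ has a maximum (which would be false for an arbitrary finite poset), one should notice that $D$ automatically admits a common left multiple, namely any element of $E$, which is exactly the hypothesis that lets Proposition~\ref{prop:existence-lcm} manufacture the required maximal element, after which one only has to verify that this element lands back inside $D$.
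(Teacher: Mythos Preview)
Your argument is correct: taking the least left common multiple of the set $D$ of left common divisors of $E$ (which exists by Proposition~\ref{prop:existence-lcm} since any element of $E$ is already a common left multiple of $D$) and then checking that this element lands back in $D$ is a clean and valid deduction.

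As for comparison: the paper does not supply a proof of this proposition at all --- it is simply quoted from \cite{brieskorn1972artin} without argument. So there is no ``paper's own proof'' to compare against. Your route, deriving the gcd from the lcm via Proposition~\ref{prop:existence-lcm}, is the standard lattice-theoretic shortcut and is entirely appropriate here; the original Brieskorn--Saito argument works more directly with the word problem and reduction lemmas in $A^+$, but given that Proposition~\ref{prop:existence-lcm} is already on the table in this paper, your two-line deduction is the natural thing to do.
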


We are now going to introduce the fundamental element of the Artin monoid, which is (when it exists) significantly important.
Recall that $\Sigma=\{\sigma_s\mid s\in S\}$.

\begin{theorem}[\cite{brieskorn1972artin}]
  For an Artin monoid $A^+$, the following conditions are equivalent:
  \begin{itemize}
   \item $A$ is of finite type;
   \item $\Sigma$ admits a least left common multiple;
   \item $\Sigma$ admits a least right common multiple.
  \end{itemize}
  Moreover, if they are satisfied, then the least left common multiple and the least right common multiple of $\Sigma$ coincide.
  \label{thm:existence-fundamental-element}
\end{theorem}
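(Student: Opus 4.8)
Next I set out the plan; it follows the classical Brieskorn--Saito analysis, organised around the canonical set-theoretic section of the projection $\pi\colon A^+\to W$.

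The plan is to compare multiples of $\Sigma$ in $A^+$ with the length function on $W$. Recall (Matsumoto--Tits) that any two reduced expressions of $w\in W$ are related by a sequence of braid relations $\pr{s}{t}{m_{s,t}}=\pr{t}{s}{m_{s,t}}$; since these are precisely the defining relations of $A^+$, the rule $w=s_1\cdots s_k\mapsto\tau(w):=\sigma_{s_1}\cdots\sigma_{s_k}$ (with $s_1\cdots s_k$ a reduced expression) defines a map $\tau\colon W\to A^+$ with $\pi(\tau(w))=w$ and $\ell(\tau(w))=\ell(w)$. The first step is to record the factorisation property: if $\ell(vw)=\ell(v)+\ell(w)$ then $\tau(vw)=\tau(v)\,\tau(w)$, and in particular $\sigma_s\leql\tau(w)$ whenever $\ell(sw)<\ell(w)$, with the mirror statements for $\leqr$.

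Next I would handle the easy implications. If $A$ is of finite type, let $w_0$ be the longest element of $W$; it is characterised by $\ell(sw_0)<\ell(w_0)$ for every $s\in S$, and likewise $\ell(w_0s)<\ell(w_0)$. The factorisation property then gives $\sigma_s\leql\tau(w_0)$ and $\sigma_s\leqr\tau(w_0)$ for all $s$, so $\tau(w_0)$ is at once a common left multiple and a common right multiple of $\Sigma$; by Proposition~\ref{prop:existence-lcm} the least left common multiple and the least right common multiple both exist.

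The substantial direction is the converse; I would treat the left-handed case, the right-handed one then following by applying the anti-automorphism $\rev$ of $A^+$ that fixes each $\sigma_s$ (reversal of positive words), which interchanges $\leql$ and $\leqr$. Suppose $\Sigma$ has a least left common multiple $\Delta$. I would show, by induction on $\ell(w)$, that $\tau(w)\leql\Delta$ for every $w\in W$; granting this, $\ell(w)=\ell(\tau(w))\le\ell(\Delta)$ for all $w$, so every element of $W=\langle S\rangle$ has length bounded by $\ell(\Delta)$ and $W$ is finite. In the inductive step one writes $w=sw'$ with $\ell(w')=\ell(w)-1$, so that $\tau(w)=\sigma_s\tau(w')$ with $\tau(w')\leql\Delta$ by induction; it then suffices to know that $\sigma_s\tau(w')$ is the \emph{least} left common multiple of $\sigma_s$ and $\tau(w')$, for then $\Delta$ --- being a common left multiple of $\sigma_s$ and $\tau(w')$ --- is left-divisible by this lcm, which equals $\tau(w)$. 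The hard part will be precisely this last assertion: that $\ell(sw')>\ell(w')$ forces $\sigma_s\tau(w')=\tau(sw')$ to be the least common left multiple of $\sigma_s$ and $\tau(w')$. I would prove it by a nested induction on $\ell(w')$, whose base case is the pairwise computation --- $\sigma_s$ and $\sigma_t$ have a common left multiple if and only if $m_{s,t}<\infty$, and then their lcm is $\pr{\sigma_s}{\sigma_t}{m_{s,t}}$ (the submonoid of $A^+$ generated by two standard generators with $m_{s,t}=\infty$ being free on them) --- and whose inductive step transports the exchange condition in $W$ into divisibility relations in $A^+$ via left cancellativity, the existence of greatest common divisors, and the existence of least common multiples when a common multiple exists.

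Finally, for the coincidence statement, I would apply the claim $\tau(w)\leql\Delta$ with $w=w_0$ to get $\tau(w_0)\leql\Delta$, which together with $\Delta\leql\tau(w_0)$ from the easy direction and cancellativity yields $\Delta=\tau(w_0)$ for the least left common multiple. Since $\rev$ carries a reduced expression for $w$ to one for $w^{-1}$, we have $\rev(\tau(w))=\tau(w^{-1})$, hence $\rev(\tau(w_0))=\tau(w_0)$ because $w_0^{-1}=w_0$; and since $\rev$ sends the least left common multiple of $\Sigma$ to the least right common multiple of $\rev(\Sigma)=\Sigma$, the latter is $\rev(\tau(w_0))=\tau(w_0)=\Delta$, as asserted. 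The whole argument therefore rests on the nested-induction lemma identifying $\tau(sw')$ with the least common left multiple of $\sigma_s$ and $\tau(w')$ when $\ell(sw')>\ell(w')$; everything else is bookkeeping with the length function and the anti-automorphism $\rev$.
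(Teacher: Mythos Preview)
The paper does not give a proof of this theorem; it is quoted from Brieskorn--Saito in the preliminaries without argument, so there is nothing in the paper to compare your proposal against.

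Independently of that, your plan for the hard direction contains a genuine error. You claim that when $\ell(sw')>\ell(w')$ the element $\sigma_s\tau(w')=\tau(sw')$ is the least left common multiple of $\sigma_s$ and $\tau(w')$ in $A^+$. But $\sigma_s\tau(w')$ is in general not even a \emph{common} left multiple of these two elements: with the paper's convention $\alpha\leql\beta\iff\alpha\gamma=\beta$, having $\tau(w')\leql\sigma_s\tau(w')$ would require $\tau(w')\gamma=\sigma_s\tau(w')$ for some $\gamma\in A^+$, and already for $w'=t$ with $m_{s,t}\geq 3$ the only left divisors of $\sigma_s\sigma_t$ are $1,\sigma_s,\sigma_s\sigma_t$, so $\sigma_t\not\leql\sigma_s\sigma_t$. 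Your own base case confirms this: you correctly state that the pairwise lcm of $\sigma_s$ and $\sigma_t$ is $\pr{\sigma_s}{\sigma_t}{m_{s,t}}$, which has length $m_{s,t}$ and is not $\sigma_s\sigma_t$ unless $m_{s,t}=2$. Thus the nested induction cannot even get started, and the scheme ``$\tau(w')\leql\Delta\Rightarrow\tau(sw')\leql\Delta$'' via an lcm identification collapses.

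The Brieskorn--Saito argument takes a different route: one shows (via their reduction lemma) that any least common multiple of generators is squarefree, and that the squarefree elements of $A^+$ are precisely the $\tau(w)$, $w\in W$. Hence if $\Sigma$ has a least left common multiple $\Delta$, then $\Delta=\tau(w)$ for some $w\in W$ with $\sigma_s\leql\tau(w)$ for every $s$, i.e.\ $\ell(sw)<\ell(w)$ for all $s\in S$; but an element with full left descent set exists in a Coxeter group only when $W$ is finite (and is then the longest element $w_0$). Your treatment of the easy direction and of the coincidence of the two lcm's via $\rev$ and $w_0^{-1}=w_0$ is fine; it is only the mechanism in the converse that needs to be replaced by the squarefree/descent-set argument rather than the false lcm identity.
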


\begin{definition}
  If $M$ is a Coxeter matrix corresponding to an Artin group of finite type, the least left (or right) common multiple of $\Sigma$ in $A^+$ is called \emph{fundamental element} of $A^+$ and is usually denoted by $\Delta$.
\end{definition}

The following theorem summarizes some of the properties of the fundamental element.
Before that, two more definitions are required.

\begin{definition}
  An element $\alpha\in A^+$ is \emph{square-free} if it cannot be written in the form $\beta \sigma_s^2\gamma$ for $\beta,\gamma\in A^+$ and $s\in S$.
\end{definition}

\begin{definition}
  Let $\rev\colon A^+ \to A^+$ be the bijection that sends an element $\sigma_{s_1}\sigma_{s_2}\cdots \sigma_{s_k}\in A^+$ to its ``reverse'' $\sigma_{s_k}\sigma_{s_{k-1}}\cdots \sigma_{s_1}$.
  It is easy to check that it is well-defined.
\end{definition}

\begin{theorem}[\cite{brieskorn1972artin}]
  Let $M$ be a Coxeter matrix corresponding to an Artin group of finite type, so that $A^+$ admits a fundamental element $\Delta$.
  Then the following properties hold:
  \begin{enumerate}[(i)]
    \item $\rev\Delta = \Delta$;
    \item an element of $A^+$ left divides $\Delta$ if and only if it right divides $\Delta$;
    \item an element of $A^+$ is square-free if and only if it is a (left or right) divisor of $\Delta$;
    \item the least (left or right) common multiple of square-free elements of $A^+$ is square-free;
    \item $\Delta$ is the uniquely determined square-free element of maximal length in $A^+$;
    \item any element $\alpha\in A$ can be written in the form $\alpha=\Delta^{-k}\beta$ for some $\beta \in A^+$ and $k\in\N$.
  \end{enumerate}
  \label{thm:properties-fundamental-element}
\end{theorem}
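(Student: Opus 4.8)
The plan is to reduce every assertion to a classical fact about the finite Coxeter group $W$ and its longest element $\delta$, transported through the \emph{Tits section} $\tau\colon W\to A^+$ that the statement already invokes. First I would define $\tau$: for $w\in W$ pick a reduced word $w=s_1\cdots s_k$ and set $\tau(w)=\sigma_{s_1}\cdots\sigma_{s_k}$. By Matsumoto's theorem any two reduced words for $w$ are connected by the braid relations $\pr{s}{t}{m_{s,t}}=\pr{t}{s}{m_{s,t}}$, which hold in $A^+$, so $\tau$ is well defined; it satisfies $\ell(\tau(w))=\ell(w)$, $\pi\circ\tau=\id_W$ (hence $\tau$ is injective), and $\tau(uv)=\tau(u)\tau(v)$ whenever $\ell(uv)=\ell(u)+\ell(v)$. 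The key first lemma I would prove is the dictionary: for $\alpha\in A^+$, the conditions ``$\alpha$ is squarefree'', ``$\ell(\pi(\alpha))=\ell(\alpha)$'' and ``$\alpha=\tau(\pi(\alpha))$'' are equivalent. If $\alpha=\beta\sigma_s^2\gamma$ then $\pi(\alpha)=\pi(\beta)\pi(\gamma)$ has length $<\ell(\alpha)$; conversely, if $\ell(\pi(\alpha))<\ell(\alpha)$ then the word spelling $\alpha$ is non-reduced in $W$, so by the word property of Coxeter groups it can be carried by braid moves into one with two equal adjacent letters, and performing the same moves in $A^+$ exhibits a factor $\sigma_s^2$ in $\alpha$. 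Along the way I would record that any left or right divisor of a squarefree element is squarefree.

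Next I would identify $\Delta$ with $\tau(\delta)$, which is (vi) and gives half of (v). Since $\delta$ is longest, $\ell(s\delta)=\ell(\delta s)=\ell(\delta)-1$ for every $s$, so, prepending (resp.\ appending) $s$ to a reduced word for $s\delta$ (resp.\ $\delta s$), one gets $\tau(\delta)=\sigma_s\,\tau(s\delta)=\tau(\delta s)\,\sigma_s$; thus $\tau(\delta)$ is simultaneously a left and a right common multiple of $\Sigma$, whence $\Delta\leql\tau(\delta)$ and $\Delta\leqr\tau(\delta)$. For the reverse divisibility let $Y$ be the greatest left common divisor of $\Delta$ and $\tau(\delta)$; then $Y\leql\tau(\delta)$, so $Y$ is squarefree, say $Y=\tau(w)$, and each $\sigma_s$, being a left common divisor of $\Delta$ and $\tau(\delta)$, left divides $Y$. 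From $\sigma_s=\tau(s)\leql\tau(w)$ a length-and-image comparison forces $w=sz$ with $\ell(z)=\ell(w)-1$, i.e.\ $\ell(sw)<\ell(w)$, for every $s$, which characterizes $w=\delta$; hence $\tau(\delta)=Y\leql\Delta$ and $\Delta=\tau(\delta)$, which in particular is squarefree. A downward induction on $\ell(\delta)-\ell(w)$, using $\tau(sw)=\sigma_s\,\tau(w)$ when $\ell(sw)>\ell(w)$ and its right-handed analogue, then shows that \emph{every} squarefree element both left and right divides $\Delta$. Together with the dictionary this proves the crisp statement ``left divisors of $\Delta$'' $=$ ``squarefree elements'' $=$ ``right divisors of $\Delta$'', which is exactly (ii) and (iii).

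The remaining items are bookkeeping. For (i): the reverse of a reduced word for $\delta$ is a reduced word for $\delta^{-1}=\delta$, so $\rev\Delta=\rev\tau(\delta)=\tau(\delta)=\Delta$ (equivalently, $\rev$ is an anti-automorphism exchanging $\leql$ with $\leqr$ and fixing $\Sigma$ pointwise, hence exchanging the left and right least common multiples of $\Sigma$, which coincide by Theorem~\ref{thm:existence-fundamental-element}). For (iv): any set $E$ of squarefree elements has $\Delta$ as a left common multiple, hence (Proposition~\ref{prop:existence-lcm}) a least left common multiple $\mu$, and $\mu\leql\Delta$ forces $\mu$ squarefree; symmetrically on the right. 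For (v): $\Delta$ is squarefree of length $\ell(\delta)$, and any squarefree $\alpha=\tau(w)$ has $\ell(\alpha)=\ell(w)\le\ell(\delta)$ with equality only for $w=\delta$ by uniqueness of the longest element of $W$. For (vii): I would first prove $\Delta A^+=A^+\Delta$; by $\rev$ it suffices that $\Delta$ left divides $\sigma_s\Delta$ for each $s$, which holds because $\sigma_s\Delta$ is a left common multiple of $\Sigma$ (for $t\ne s$ one has $\pr{\sigma_t}{\sigma_s}{m_{s,t}}=\pr{\sigma_s}{\sigma_t}{m_{s,t}}=\sigma_s\cdot(\text{squarefree element})$, and a squarefree element left divides $\Delta$, so $\sigma_t\leql\pr{\sigma_s}{\sigma_t}{m_{s,t}}\leql\sigma_s\Delta$). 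Given normality, an easy induction gives $\alpha\leql\Delta^{\ell(\alpha)}$ for every $\alpha\in A^+$, so $A^+$ satisfies the left Ore condition and, being cancellative, embeds into its group of left fractions; the latter coincides with the groupification $A$ of $A^+$, so each $\alpha\in A$ is $\gamma^{-1}\beta$ with $\gamma,\beta\in A^+$; choosing $k$ with $\Delta^k=\gamma\gamma'$ yields $\alpha=\gamma'\Delta^{-k}\beta=\Delta^{-k}(\Delta^k\gamma'\Delta^{-k})\beta$ with $\Delta^k\gamma'\Delta^{-k}\in A^+$ by normality.

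I expect the genuine obstacle to be the dictionary lemma of the first paragraph together with its companion in the second --- namely that a squarefree element of $A^+$ lies in the image of $\tau$, and that every squarefree element divides $\Delta$ (which is what pins down $\Delta=\tau(\delta)$). These are the only places where nontrivial Coxeter-group combinatorics (the word property and the characterization of $\delta$) must be imported; once they are in place, the left--right symmetry built into the notion ``squarefree'' makes (i)--(v) essentially formal, and (vii) is a routine Ore-localization argument.
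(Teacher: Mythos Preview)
The paper does not prove this theorem: it is quoted verbatim from Brieskorn--Saito \cite{brieskorn1972artin} and used as a black box, so there is no ``paper's own proof'' to compare against. Your strategy is essentially the classical Brieskorn--Saito argument and is sound: the Tits section $\tau$ is well defined by Matsumoto's theorem, the ``dictionary'' (squarefree $\Leftrightarrow$ $\ell(\pi(\alpha))=\ell(\alpha)$ $\Leftrightarrow$ $\alpha\in\tau(W)$) follows from the word property of Coxeter groups exactly as you outline, and the identification $\Delta=\tau(\delta)$ via the characterization of $\delta$ by $\ell(s\delta)<\ell(\delta)$ for all $s$ is the standard route. Items (i)--(v) then fall out as you say, and your argument for (vii) via $\Delta A^+=A^+\Delta$ and the Ore condition is correct (note that the group of left fractions automatically satisfies the universal property of the groupification, so it \emph{is} $A$). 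Two small remarks: in your downward induction showing that every squarefree $\tau(w)$ left divides $\Delta$, be sure to use the \emph{right}-handed step $\tau(ws)=\tau(w)\sigma_s$ when $\ell(ws)>\ell(w)$ (your parenthetical ``and its right-handed analogue'' covers this, but the left-handed formula alone gives right-divisibility, not left-divisibility); and in the dictionary lemma, the passage ``non-reduced $\Rightarrow$ braid-equivalent to a word with a square'' deserves one line of justification via the exchange condition plus Matsumoto applied to the length-$(k-1)$ prefix.
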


Consider now some subset $T\subseteq S$. Let $\Sigma_T = \{ \sigma_s\mid s\in T\}$ and let $A_T$ be the subgroup of $A$ generated by $\Sigma_T$.

\begin{theorem}[\cite{van1983homotopy}]
  The natural homomorphism $A_T \to A$ which sends $\sigma_s$ to $\sigma_s$ for all $s\in T$ is injective.
  In other words, $A_T$ is the Artin group corresponding to the Coxeter matrix $M|_{T\times T}$.
\end{theorem}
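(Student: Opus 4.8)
The statement is van der Lek's theorem identifying standard parabolic subgroups of Artin groups, and I would prove it topologically, realizing Artin groups as fundamental groups of complexified reflection-arrangement complements. Let $W$ act on $V=\mathbb R^{S}$ through the geometric representation, let $I$ be its Tits cone and $\mathcal A$ the associated arrangement of reflecting hyperplanes, and set
\[
  Y_{W}=\bigl\{\,z\in V_{\mathbb C}:\operatorname{Re}z\in\operatorname{int} I\,\bigr\}\setminus\bigcup_{H\in\mathcal A}H_{\mathbb C},
\]
on which $W$ acts freely and properly discontinuously. The first step is to invoke van der Lek's computation $\pi_{1}(Y_{W}/W)\cong A$, natural in the Coxeter datum; applied to $(W_{T},T)$ it gives $\pi_{1}(Y_{W_{T}}/W_{T})\cong A_{T}$, and the inclusion of the $W_{T}$-datum into the $W$-datum induces a map $Y_{W_{T}}/W_{T}\to Y_{W}/W$ realizing on $\pi_{1}$ precisely the homomorphism $A_{T}\to A$ of the statement. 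Already the existence and naturality of this map requires a little care: one embeds $Y_{W_{T}}$ into $Y_{W}$ after translating by a generic vector in the span of $\{\alpha_{s}:s\notin T\}$, using that $W_{T}$ fixes that subspace. It therefore suffices to show that this map of fundamental groups is injective.

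For injectivity I would reduce, along a chain $T=T_{0}\subsetneq T_{1}\subsetneq\dots\subsetneq T_{n}=S$ with $|T_{i+1}\setminus T_{i}|=1$, to the single case $T=S\setminus\{s\}$, and then argue as in van der Lek's thesis by an induction that produces a locally trivial fibration modelled on the Fadell–Neuwirth fibrations for pure braid groups: a fibration whose total space is homotopy equivalent to $Y_{W}/W$, whose base has fundamental group $A_{S\setminus\{s\}}$, and whose fibre is the complement of a finite line arrangement in $\mathbb C$, in particular path-connected. The map $Y_{W_{S\setminus\{s\}}}/W_{S\setminus\{s\}}\to Y_{W}/W$ admits a section — take a generic real point — and from the homotopy long exact sequence
\[
  \cdots\longrightarrow\pi_{1}(\mathrm{fibre})\longrightarrow\pi_{1}(Y_{W}/W)\longrightarrow A_{S\setminus\{s\}}\longrightarrow\pi_{0}(\mathrm{fibre})=\ast
\]
the section forces $A_{S\setminus\{s\}}\to A$ to be injective. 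The main obstacle is exactly the construction of this fibration and the verification of its local triviality and of the connectedness of its fibre: this is where one must genuinely use the geometry of the Tits cone and the local product structure of $\mathcal A$ along the face of the fundamental chamber whose stabilizer is $W_{S\setminus\{s\}}$, and it is the part of the argument that is not formal. The remaining ingredients — naturality of $\pi_{1}(Y_{W}/W)\cong A$, the inductive chain $T_{0}\subsetneq\dots\subsetneq T_{n}$, and the exact-sequence argument — are routine once the fibration is available.

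Finally, when $W_{T}$ is of finite type there is a purely monoid-theoretic proof using only the material above. First, $A_{T}^{+}\hookrightarrow A^{+}$: an equality of positive words in $A^{+}$ is witnessed by a finite sequence of substitutions of the defining relations $\pr{\sigma_{u}}{\sigma_{v}}{m_{u,v}}=\pr{\sigma_{v}}{\sigma_{u}}{m_{u,v}}$ (a theorem of Tits), and each such substitution preserves the set of letters occurring, so two words on $\Sigma_{T}=\{\sigma_{s}:s\in T\}$ are equal in $A^{+}$ if and only if they are already equal in $A_{T}^{+}$. Second, since $W_{T}$ is finite, Theorem~\ref{thm:properties-fundamental-element}(vii) applied to $A_{T}^{+}$ writes every $\alpha\in A_{T}$ as $\Delta_{T}^{-k}\beta$ with $\beta\in A_{T}^{+}$ and $k\in\mathbb N$, where $\Delta_{T}$ is the fundamental element of $A_{T}^{+}$. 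If such an $\alpha$ becomes trivial in $A$, then $\beta=\Delta_{T}^{\,k}$ in $A$; both sides lie in $A^{+}$, so $\beta=\Delta_{T}^{\,k}$ in $A^{+}$ by Theorem~\ref{thm:artin-monoid-injects-in-group}, hence in $A_{T}^{+}$ by the first point, and therefore $\alpha$ was already trivial in $A_{T}$.
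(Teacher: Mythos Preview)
The paper does not prove this theorem: it is stated with a citation to van der Lek's thesis and no argument is supplied. There is therefore no ``paper's own proof'' to compare your proposal against.

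On its own merits: your first approach is an accurate outline of van der Lek's original topological argument, with the crucial and genuinely non-trivial step---constructing the locally trivial fibration over the rank-one-smaller stratum and checking connectedness of the fibre---honestly flagged as the part that is ``not formal''. As written this is a sketch pointing at the source rather than a self-contained proof, which is appropriate given that the paper itself only cites the result. Your second, monoid-theoretic argument is correct and pleasant, but note that it handles only the case where $W_T$ is finite, so by itself it does not establish the theorem in the stated generality. You should also watch for circularity: you invoke Theorem~\ref{thm:artin-monoid-injects-in-group}, and depending on which proof of $A^{+}\hookrightarrow A$ one has in mind, that result may itself rely on van der Lek's theorem; if you want this route to stand on its own you must make sure the injectivity of the monoid is proved independently.
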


\begin{theorem}[\cite{brieskorn1972artin}]
  The least (left or right) common multiple of $\Sigma_T$ exists in $A^+$ if and only if the Coxeter matrix $M|_{T\times T}$ is of finite type.
  \label{thm:existence-fundamental-element-subgroups}
\end{theorem}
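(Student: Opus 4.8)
The plan is to bootstrap Theorem~\ref{thm:existence-fundamental-element} from the full generating set $\Sigma$ to the subset $\Sigma_T$. First I would set $A_T^+$ to be the submonoid of $A^+$ generated by $\Sigma_T$ and observe that it is canonically the Artin monoid of the Coxeter matrix $M_{T\times T}$: by Theorem~\ref{thm:artin-monoid-injects-in-group} and the theorem of van der Lek \cite{van1983homotopy} the composite $A_T^+\hookrightarrow A^+\hookrightarrow A$ factors through $A_T$, which is injective, and its image is exactly the positive part $A_T^+$. The ``if'' direction is then the easy half: if $M_{T\times T}$ is of finite type, Theorem~\ref{thm:existence-fundamental-element} applied to $A_T^+$ produces a fundamental element $\Delta_T\in A_T^+$, which is in particular a common left multiple of $\Sigma_T$ inside $A^+$, so Proposition~\ref{prop:existence-lcm} gives a least left common multiple of $\Sigma_T$ in $A^+$ (and the $\rev$ argument handles right multiples).

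For the ``only if'' direction I would start from a left common multiple of $\Sigma_T$ in $A^+$, pass to the least one $\mu$ (Proposition~\ref{prop:existence-lcm}), and argue that $\mu$ is squarefree. The point is that $\mu$ is the least common multiple of the squarefree elements $\{\sigma_s : s\in T\}$, and the least common multiple of squarefree elements is squarefree — this is the statement of Theorem~\ref{thm:properties-fundamental-element}(iv) in the (a priori not finite type) setting, which belongs to the general Artin-monoid theory of \cite{brieskorn1972artin}. Once $\mu$ is known to be squarefree I would invoke the standard identification of the squarefree elements of $A^+$ with the image of the Tits section $\tau\colon W\to A^+$ (well defined since a reduced word determines a unique element of $A^+$, cf.\ \cite{bourbaki1968elements, brieskorn1972artin}), writing $\mu=\tau(w)$ for some $w\in W$. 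Using that $\tau$ is injective, length-preserving, and multiplicative on reduced factorizations, one checks that $\sigma_s\leql\tau(w)$ holds exactly when $w$ has a reduced expression beginning with $s$; hence the hypothesis that $\sigma_s\leql\mu$ for every $s\in T$ says precisely that $T$ is contained in the left descent set of $w$.

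To finish I would use the classical Coxeter-theoretic fact that $T\subseteq\{s\in S : \ell(sw)<\ell(w)\}$ for some $w\in W$ forces $W_T$ to be finite: factoring $w=u\,w'$ with $w'$ the minimal-length representative of the coset $W_T w$ and $u\in W_T$, length additivity gives $\ell(sw)-\ell(w)=\ell(su)-\ell(u)$ for all $s\in T$, so $u\in W_T$ has all of $T$ in its left descent set, which forces $u$ to be the longest element of $W_T$ — and such an element exists only if $W_T$ is finite. This yields that $M_{T\times T}$ is of finite type.

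The genuinely non-trivial inputs, and hence the \emph{main obstacle}, are the two ``infinite-type'' facts used above: that the least common multiple of squarefree elements is squarefree, and that squarefree elements of $A^+$ are exactly the values of $\tau$. I would either cite these from \cite{brieskorn1972artin} or, as an alternative route that sidesteps the Tits section entirely, establish the identity $A_T^+=A^+\cap A_T$ (equivalently, that $A_T^+$ is closed under taking divisors inside $A^+$); that identity lets one transport a common multiple of $\Sigma_T$ from $A^+$ straight back into $A_T^+$ and then apply Theorem~\ref{thm:existence-fundamental-element} to $A_T^+$, but proving the closure property is itself the hard part of that alternative.
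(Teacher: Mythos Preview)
The paper does not give its own proof of this theorem: it is simply quoted from \cite{brieskorn1972artin} and used later as a black box. So there is nothing to compare your proposal against inside the paper.

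That said, your outline is a correct reconstruction of the Brieskorn--Saito argument. The ``if'' direction is exactly as you say. For the ``only if'' direction, the two non-trivial inputs you flag are indeed the crux and are both available in \cite{brieskorn1972artin} without any finite-type hypothesis: the fact that a least common multiple of squarefree elements (when it exists) is squarefree is their reduction lemma, and the bijection between squarefree elements of $A^+$ and $W$ via the Tits section is established there in general. You are right that you cannot cite Theorem~\ref{thm:properties-fundamental-element}(iv) of the present paper for this, since that statement is recorded only under the finite-type hypothesis; you would have to go back to \cite{brieskorn1972artin} directly. The final Coxeter-theoretic step---that an element $u\in W_T$ with $\ell(su)<\ell(u)$ for every $s\in T$ forces $W_T$ to be finite---is standard (e.g.\ Bourbaki), and your coset-factorization reduction to that statement is clean and correct.

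Your alternative route through the closure property $A_T^+=A^+\cap A_T$ is also valid and is in fact how one usually packages Lemma~\ref{lemma:fundamental-element-as-common-multiple}; as you note, proving that closure is essentially the same work as the squarefree argument, so neither route is a genuine shortcut over the other.
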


If $M|_{T\times T}$ is a Coxeter matrix of finite type, it makes sense to consider the fundamental element of the Artin monoid $A_T^+$ corresponding to $M|_{T\times T}$.
Such element will be denoted by $\Delta_T$.

\begin{lemma}[\cite{brieskorn1972artin}]
  $\Delta_T$ is precisely the least (left or right) common multiple of $\Sigma_T$ in $A^+$.
  \label{lemma:fundamental-element-as-common-multiple}
\end{lemma}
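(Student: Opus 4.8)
The plan is to reduce everything to a single elementary observation about \emph{supports}. For $\alpha\in A^+$, the set of generators $\sigma_s$ that occur in an expression of $\alpha$ as a product of elements of $\Sigma$ does not depend on the chosen expression: each defining relation $\pr{\sigma_s}{\sigma_t}{m_{s,t}}=\pr{\sigma_t}{\sigma_s}{m_{s,t}}$ has both sides supported exactly on $\{s,t\}$ (note $m_{s,t}\ge 2$ whenever $s\ne t$), so rewriting one side into the other leaves the set of letters of a word unchanged. Hence there is a well-defined map $\operatorname{supp}\colon A^+\to 2^S$. Using the injectivity of $A_T\hookrightarrow A$ together with Theorem~\ref{thm:artin-monoid-injects-in-group} applied to $M_{T\times T}$, we may regard $A_T^+$ as a submonoid of $A^+$, and it is then exactly $\{\alpha\in A^+:\operatorname{supp}(\alpha)\subseteq T\}$. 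Moreover, if $\alpha\gamma=\beta$ in $A^+$, concatenating expressions of $\alpha$ and $\gamma$ gives $\operatorname{supp}(\alpha)\cup\operatorname{supp}(\gamma)=\operatorname{supp}(\beta)$; in particular \emph{every left divisor and every right divisor in $A^+$ of an element of $A_T^+$ again lies in $A_T^+$}.

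With this in hand the argument is a short sandwich. Let $L$ be the least left common multiple of $\Sigma_T$ in $A^+$, which exists by Theorem~\ref{thm:existence-fundamental-element-subgroups} because $M_{T\times T}$ is of finite type. On one side, $\Delta_T\in A_T^+\subseteq A^+$ is a left common multiple of $\Sigma_T$, so $L\leql\Delta_T$. On the other side, this relation and the closure property force $L\in A_T^+$; and for each $s\in T$, writing $L=\sigma_s\gamma_s$ with $\gamma_s\in A^+$, the cofactor $\gamma_s$ is a right divisor of $L\in A_T^+$, hence $\gamma_s\in A_T^+$, so $\sigma_s\leql L$ already holds inside $A_T^+$. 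Thus $L$ is a left common multiple of $\Sigma_T$ in $A_T^+$, and since $\Delta_T$ is by definition the least such (Theorem~\ref{thm:existence-fundamental-element} applied to $A_T^+$), $\Delta_T\leql L$. Antisymmetry of $\leql$ gives $L=\Delta_T$. The statement for right common multiples then follows by applying the involution $\rev$, which fixes each $\sigma_s$, interchanges $\leql$ and $\leqr$, and fixes $\Delta_T$ by Theorem~\ref{thm:properties-fundamental-element}(i): it carries the least left common multiple of $\Sigma_T$ in $A^+$ to the least right common multiple, which is therefore $\rev(\Delta_T)=\Delta_T$ as well.

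I do not expect a genuinely hard obstacle here; the only point deserving care is the very first one, namely that support is an invariant of elements of $A^+$ and that $A_T^+$ is consequently the sub-monoid of elements supported on $T$ — the latter identification being the place where one must invoke that $A_T^+\hookrightarrow A^+$ is injective (immediate from $A_T^+\hookrightarrow A_T$, Theorem~\ref{thm:artin-monoid-injects-in-group}, composed with $A_T\hookrightarrow A$). One could instead bypass $\operatorname{supp}$ entirely and prove the divisor-closure of $A_T^+$ by induction on length using left cancellativity of $A^+$, but the support argument is shorter and self-contained.
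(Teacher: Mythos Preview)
Your argument is correct. The paper itself gives no proof of this lemma; it is simply quoted from \cite{brieskorn1972artin}. So there is no ``paper's own proof'' to compare against beyond the original Brieskorn--Saito reference.

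Your route via the support invariant is clean and self-contained: the key point, that $A_T^+$ coincides with the set of elements of $A^+$ supported on $T$ and is therefore closed under taking (left or right) divisors in $A^+$, is exactly what makes the sandwich $L\leql\Delta_T$ and $\Delta_T\leql L$ work. The only place one must be slightly careful is, as you note, the injectivity of $A_T^+\to A^+$; your justification via $A_T^+\hookrightarrow A_T\hookrightarrow A$ and the factorisation through $A^+\hookrightarrow A$ is fine. The use of $\rev$ for the right-hand statement is also correct, since $\rev$ is an anti-automorphism fixing each $\sigma_s$ and hence fixing $\Sigma_T$ setwise, while Theorem~\ref{thm:properties-fundamental-element}(i) applied inside $A_T^+$ gives $\rev(\Delta_T)=\Delta_T$.
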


We will finally introduce a normal form for elements of the Artin monoid $A^+$. To do so, define for any $\alpha\in A^+$ the set
\[ I(\alpha) = \{ s\in S \mid \alpha=\beta\sigma_s \text{ for some } \beta \in A^+ \}. \]
In other words, $I(\alpha)$ is the set of elements $s\in S$ such that $\sigma_s$ right divides $\alpha$.

\begin{theorem}[\cite{brieskorn1972artin}]
  For any $\alpha\in A^+$ there exists a unique tuple $(T_1,\dots,T_k)$ of non-empty subsets of $S$ such that
  \[ \alpha = \Delta_{T_k}\Delta_{T_{k-1}}\cdots \Delta_{T_1} \]
  and $I(\Delta_{T_k}\cdots \Delta_{T_j})=T_j$ for $1\leq j\leq k$.
  \label{thm:normal-form-artin-monoid}
\end{theorem}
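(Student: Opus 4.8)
The plan is to prove existence and uniqueness separately, both by induction on the length $\ell(\alpha)$, using the greatest right common divisor to extract the leading block. For the existence part, given $\alpha\in A^+$ with $\alpha\neq 1$, set $T_k := I(\alpha)$, the set of $s\in S$ such that $\sigma_s$ right divides $\alpha$; this is non-empty since $\alpha$ has positive length. The first key step is to show that $\Delta_{T_k}$ is well defined and right divides $\alpha$: indeed every $\sigma_s$ with $s\in T_k$ right divides $\alpha$, so $\alpha$ is a right common multiple of $\Sigma_{T_k}$; by Proposition~\ref{prop:existence-lcm} the least right common multiple of $\Sigma_{T_k}$ exists, and by Theorem~\ref{thm:existence-fundamental-element-subgroups} and Lemma~\ref{lemma:fundamental-element-as-common-multiple} it equals $\Delta_{T_k}$, which therefore right divides $\alpha$. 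Write $\alpha = \Delta_{T_k}\alpha'$ with $\alpha'\in A^+$; since $T_k\neq\varnothing$ we have $\ell(\Delta_{T_k})\geq 1$, so $\ell(\alpha') < \ell(\alpha)$ (using that $\ell$ is additive, which follows from cancellativity and the fact that the defining relations preserve length). By induction $\alpha'$ has a normal form $\alpha' = \Delta_{T_{k-1}}\cdots\Delta_{T_1}$ with $I(\Delta_{T_{k-1}}\cdots\Delta_{T_j})=T_j$ for $1\le j\le k-1$, and prepending $\Delta_{T_k}$ gives the desired expression for $\alpha$; the condition $I(\Delta_{T_k}\cdots\Delta_{T_j})=T_j$ holds for $j=k$ by construction and for $j<k$ by the inductive hypothesis (note $I(\Delta_{T_k}\cdots\Delta_{T_j})$ only depends on $\Delta_{T_{k-1}}\cdots\Delta_{T_j}$ together with the fact that $\Delta_{T_k}$ on the left cannot create new right divisors — this is the point requiring care).

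For uniqueness, suppose $\alpha = \Delta_{T_k}\cdots\Delta_{T_1} = \Delta_{U_\ell}\cdots\Delta_{U_1}$ are two expressions satisfying the stated conditions. Taking $I$ of both sides and using the defining property gives $T_k = I(\alpha) = U_\ell$, so the leading blocks agree: $\Delta_{T_k} = \Delta_{U_\ell}$. By left cancellativity of $A^+$ (a consequence of Theorem~\ref{thm:artin-monoid-injects-in-group}, or as noted directly after it), we may cancel $\Delta_{T_k}$ from the left and obtain two expressions of the shorter element $\Delta_{T_{k-1}}\cdots\Delta_{T_1} = \Delta_{U_{\ell-1}}\cdots\Delta_{U_1}$, each still satisfying the normal-form conditions (the conditions for the tail are inherited verbatim, since they only involve the suffixes). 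Induction on $\ell(\alpha)$ then forces $k-1 = \ell-1$ and $T_j = U_j$ for all $j$, completing the argument.

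The main obstacle I anticipate is the claim that multiplying on the left by $\Delta_{T_k}$ does not enlarge the set of right divisors by single generators, i.e.\ that $I(\Delta_{T_k}\beta)$ can be controlled in terms of $\beta$ so that the conditions $I(\Delta_{T_k}\cdots\Delta_{T_j})=T_j$ stay consistent when one block is added or removed. This should follow from cancellativity together with the divisibility structure: if $\sigma_s$ right divides $\Delta_{T_k}\beta$ but not $\beta$, one derives a relation forcing $\sigma_s$ to interact with $\Delta_{T_k}$ in a way that, via the fundamental-element properties in Theorem~\ref{thm:properties-fundamental-element} (squarefreeness, lcm of squarefree elements being squarefree), leads to a contradiction with the length count or with $T_k = I(\alpha)$ being already maximal. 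A secondary technical point is verifying that the greatest right common divisor machinery interacts correctly with the length function; this is routine given the cited results of Brieskorn--Saito but must be stated carefully to make the induction go through.
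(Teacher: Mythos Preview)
The paper does not prove this theorem: it is quoted from \cite{brieskorn1972artin} as background, with no proof given. So there is no paper proof to compare against.

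That said, your outline has a systematic left/right indexing error that would make the argument fail as written. From the condition $I(\Delta_{T_k}\cdots\Delta_{T_j})=T_j$ with $j=1$ one gets $I(\alpha)=T_1$, not $T_k$. Thus the block determined by $I(\alpha)$ is the \emph{rightmost} factor $\Delta_{T_1}$, and one should write $\alpha=\alpha'\Delta_{T_1}$ (using that $\Delta_{T_1}$ is the least right common multiple of $\{\sigma_s:s\in I(\alpha)\}$ and hence right divides $\alpha$), then apply induction to $\alpha'$. With this orientation the inductive conditions $I(\Delta_{T_k}\cdots\Delta_{T_j})=T_j$ for $2\le j\le k$ come directly from the hypothesis on $\alpha'$, and the case $j=1$ is the definition of $T_1$; so your ``main obstacle'' simply disappears. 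In your version you peel $\Delta_{T_k}$ off the \emph{left} while defining $T_k$ via \emph{right} divisors, and then need the (false) claim that left multiplication cannot enlarge $I(\cdot)$: for instance in type $A_2$ one has $I(\sigma_2\sigma_1)=\{1\}$ but $I(\sigma_1\cdot\sigma_2\sigma_1)=I(\Delta)=\{1,2\}$.

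The uniqueness half has the same issue: from the normal-form condition one reads off $T_1=I(\alpha)=U_1$, cancels $\Delta_{T_1}=\Delta_{U_1}$ on the \emph{right} (the monoid is right cancellative), and the remaining conditions for $j\ge 2$ are literally the normal-form conditions for the shorter element, so induction finishes. Once you fix the orientation, your plan is exactly the standard Brieskorn--Saito argument.
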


\subsection{The Salvetti complex and the \texorpdfstring{$K(\pi,1)$}{K(pi,1)} conjecture}

The Salvetti complex of a hyperplane arrangement was first introduced by Salvetti \cite{salvetti1987topology} for arrangements of affine hyperplanes, thus including Coxeter graphs of finite and affine type. Later it was generalized to the case of arrangements associated to arbitrary Coxeter graphs (see \cite{salvetti1994homotopy,paris2012k}).
We are going to define it as in \cite{paris2012k}, and we will quote some known results about it.

\begin{definition}
  Given a poset $(P,\leq)$, its \emph{derived complex} is a simplicial complex with $P$ as set of vertices and having a $k$-simplex $\{p_0,\dots,p_k\}$ for every chain $p_0 < p_1 < \ldots < p_k$ in $P$.
\end{definition}

\begin{definition}
  Let $T\subseteq S$.
  An element $w\in W$ is \emph{$T$-minimal} if it is the unique element of smallest length in the coset $wW_T$ (the uniqueness of such element is proved in \cite{bourbaki1968elements}).
\end{definition}

Consider now the set $W\times S^f$, with the following partial order: $(u,T)\leq (v,R)$ if $T\subseteq R$, $v^{-1}u\in W_R$ and $v^{-1}u$ is $T$-minimal (for a proof that this is indeed a partial order relation, see \cite{paris2012k}, Lemma 3.2).

\begin{lemma}[\cite{paris2012k}, Corollary 3.7 and Lemma 3.9]
  Let $(u,T)\in W\times \sf$, and set
  \begin{IEEEeqnarray*}{cCl}
    P &=&\{(v,R)\in W \times \sf \mid (v,R)\leq (u,T)\}, \\
    P_1 &=& \{(v,R)\in W \times \sf \mid (v,R)< (u,T)\}.
  \end{IEEEeqnarray*}%
  Call $P'$ and $P_1'$ the geometric realizations of the derived complexes of $(P,\leq)$ and $(P_1,\leq)$, respectively.
  Then the pair $(P',P_1')$ is homeomorphic to the pair $(D^n,S^{n-1})$ for $n=|T|$.
  \label{lemma:cells-salvetti-complex}
\end{lemma}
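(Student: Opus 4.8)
The plan is to realize $P$ as the face poset of a cell structure on a ball and to identify its boundary with $P_1$. First I would observe that the set $P = \{(v,R) : (v,R) \le (u,T)\}$ has a combinatorial description that only depends on $T$: writing $(v,R) \le (u,T)$ as $R \subseteq T$, $u^{-1}v \in W_T$, and $u^{-1}v$ being $R$-minimal, the map $(v,R) \mapsto (u^{-1}v, R)$ is a poset isomorphism from $P$ onto $Q := \{(w,R) : R \subseteq T,\ w \in W_T,\ w\ \text{is}\ R\text{-minimal}\}$ with the induced order. Thus it suffices to prove the statement for $(u,T) = (1,T)$, i.e.\ to show that the derived complex of $Q$ is homeomorphic to $(D^n, S^{n-1})$ with $n = |T|$, and that the sub-poset $Q_1$ (the part with $(w,R) \ne (1,T)$, equivalently $R \subsetneq T$ or $w \ne 1$) realizes the boundary sphere.

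The key step is to recognize $Q$ as (the face poset of) the first barycentric subdivision of the boundary complex of a simplex, together with a single top cell. Concretely, a finite Coxeter group $W_T$ acts on the sphere $S^{n-1}$ (with $n = |T|$) with a fundamental domain a spherical simplex whose walls are indexed by $T$; the reflection arrangement cuts $S^{n-1}$ into chambers indexed by $W_T$, and the closed faces of this arrangement are indexed by pairs $(w, R)$ with $w \in W_T$ and $R \subseteq T$, up to the identification $(w,R) \sim (w', R)$ when $w^{-1}w' \in W_R$. Choosing the $R$-minimal coset representative makes this indexing canonical, which is exactly the description of $Q$; the order relation $(w,R) \le (w',R')$ corresponds to the face $(w,R)$ being contained in the face $(w',R')$. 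Hence the derived complex of $Q$ is the order complex of the face poset of the cell complex $S^{n-1}$, i.e.\ its barycentric subdivision, so $|Q'| \cong S^{n-1}$. Adjoining the maximum element $(1,T)$ — which is the cone point — turns this into the cone on $S^{n-1}$, namely $D^n$, and $|Q_1'|$ is precisely the subcomplex spanned by chains avoiding the maximum, i.e.\ the barycentric subdivision of $S^{n-1}$ itself.

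In more detail the induction I would set up runs on $n = |T|$: for $R \subsetneq T$ the down-set of $(w,R)$ in $Q$ is, after translating by $w$, a copy of the corresponding poset for the pair $(1,R)$ inside $W_R$, which by induction has derived complex a ball $D^{|R|}$; these balls glue along the combinatorics of the simplex-boundary to give $|Q_1'| \cong S^{n-1}$, and then $|Q'| = |Q_1'| * \{(1,T)\} \cong D^n$ with boundary $|Q_1'|$. One has to check the gluing is consistent — that two faces $(w,R)$ and $(w',R')$ with $R, R' \subseteq T$ are comparable in $Q$ exactly when the corresponding closed cells of the arrangement are nested — and this is where the $R$-minimality condition and the standard facts about parabolic cosets in finite Coxeter groups (every element factors uniquely as a minimal coset representative times an element of $W_R$, and minimality is preserved under the relevant restrictions) do the real work.

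The main obstacle I expect is precisely this last verification: showing that the abstract order on $W \times \sf$ given in the paper matches the face-incidence order of the Coxeter arrangement, including the subtle point that $R$-minimality of $v^{-1}u$ is the right normalization and that it behaves well under passing to sub-parabolics (so the inductive step actually produces the claimed sub-poset). Once that dictionary between $(P,\le)$ and the face poset of the sphere $S^{n-1}$ is in place, the homeomorphism $(|P'|,|P_1'|) \cong (D^n, S^{n-1})$ is formal, since the geometric realization of the order complex of the face poset of a regular CW complex is homeomorphic to that complex, and coning off the top face of a sphere gives a ball with the sphere as boundary.
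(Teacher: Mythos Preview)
The paper itself does not prove this lemma; it is quoted from \cite{paris2012k} and used only as input for the definition of the Salvetti complex, so there is no in-paper proof to compare against.

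That said, your outline is essentially the standard argument and is correct. The reduction to $(u,T)=(1,T)$ by left translation is immediate, and identifying the resulting down-set $Q$ with the face poset of the Coxeter complex of $(W_T,T)$ (augmented by the empty face) is the right idea. The verification you single out as the main obstacle does go through: if $(w,R),(w',R')\in Q$ with $R\subseteq R'$ and $w\in w'W_{R'}$, then writing $w=w'x$ with $x\in W_{R'}$ and using that $w'$ is $R'$-minimal gives $\ell(w)=\ell(w')+\ell(x)$ and $\ell(ws)=\ell(w')+\ell(xs)$ for every $s\in R\subseteq R'$; since $w$ is $R$-minimal this forces $\ell(xs)>\ell(x)$, so $x=(w')^{-1}w$ is automatically $R$-minimal. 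Thus on $Q$ the poset order reduces to ``$R\subseteq R'$ and $wW_{R'}=w'W_{R'}$'', which is precisely (reversed) incidence of faces in the Coxeter arrangement.

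Two small corrections to your write-up. First, the phrase ``first barycentric subdivision of the boundary complex of a simplex'' is off on two counts: $Q$ itself is (the opposite of) the face poset of the Coxeter complex, not of any barycentric subdivision, and the Coxeter complex is the boundary of a simplex only in type~$A$. Your subsequent description via the reflection arrangement is the correct one and supersedes this. Second, the direction of face containment you state is reversed: $(w,R)\le(w',R')$ in $Q$ forces $R\subseteq R'$, so the arrangement face indexed by $wW_R$ has \emph{larger} dimension and \emph{contains} the face $w'W_{R'}$. In other words $Q$ is anti-isomorphic, not isomorphic, to the augmented face poset. This is harmless for the conclusion, since a poset and its opposite have the same order complex; the maximum $(1,T)$ of $Q$ corresponds to the empty face and is indeed the cone point, yielding $(|P'|,|P_1'|)\cong(D^n,S^{n-1})$ as claimed.
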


\begin{definition}
  The \emph{Salvetti complex} of a Coxeter matrix $M$, denoted by $\sal(M)$, is the geometric realization of the derived complex of $(W\times \sf,\leq)$.
  By Lemma \ref{lemma:cells-salvetti-complex} it has a CW structure with one cell $C(u,T)$ for all $(u,T)\in W\times \sf$, where the dimension of a cell $C(u,T)$ is $|T|$.
\end{definition}

The Coxeter group $W$ acts on $W \times \sf$ by left-multiplication on the first coordinate, and thus also acts on $\sal(M)$. Such action is free, properly discontinuous and cellular, so the quotient map
\[ \sal(M) \to \sal(M)/W \]
is a covering map. Moreover such covering map induces a CW structure on the quotient space $\bsal(M)=\sal(M)/W$.
The complex $\bsal(M)$ has one cell $\bar C(T)$ of dimension $|T|$ for each $T\in S^f$.

Following \cite{paris2012k}, let us describe in more detail the combinatorics of the low-dimensional cells of the complexes $\sal(M)$ and $\bsal(M)$.
\begin{itemize}
 \item The $0$-cells of $\sal(M)$ are in one-to-one correspondence with the elements of the Coxeter group $W$. For this reason we also denote a $0$-cell $C(w, \varnothing)$ simply by $w$.
 
 \item Since $\{s\}\in \sf$ for all $s\in S$, we have a 1-cell $C(w,\{s\})$ joining vertices $w$ and $ws$ for each $w\in W$ and $s\in S$.
 Notice that the 1-cell $C(ws,\{s\})$ joins vertices $w$ and $ws$, but is different from $C(w,\{s\})$.
 Orient the 1-cell $C(w,\{s\})$ from $w$ to $ws$.

 \item A 2-cell $C(w,\{s,t\})$ exists only if $\{s,t\}\in\sf$, i.e.\ if $m=m_{s,t}\neq \infty$.
 If it exists, such 2-cell is a $2m$-gon with vertices
 \begin{IEEEeqnarray*}{l}
   w, \, ws, \, wst, \, \dots, \, w\,\pr{s}{t}{m-1}, \, w\,\pr{s}{t}{m} = w\,\pr{t}{s}{m},\\
   w\,\pr{t}{s}{m-1},\, \dots,\, wt.
 \end{IEEEeqnarray*}
 See also Figure \ref{fig:salvetti-complex} for a representation of such cell in the case $m=3$.
 
 \begin{figure}[htbp]
  \begin{center}
  \includegraphics{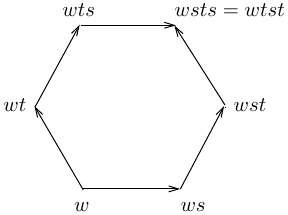}
  \end{center}
  \caption{Example of a $2$-cell $C(w,\{s,t\})$ of the complex $\sal(M)$, in the case $m_{s,t}=3$.}
  \label{fig:salvetti-complex}
 \end{figure}
\end{itemize}
The quotient complex $\bsal(M)$ has one $0$-cell $\bar C(\varnothing)$, a $1$-cell $\bar C(\{s\})$ for each $s\in S$, and a $2$-cell $\bar C(\{s,t\})$ for each $\{s,t\}\in \sf$.
Therefore the fundamental group of $\bsal(M)$ admits a presentation with a generator $\sigma_s$ for each $s\in S$ and a relation for each $2$-cell $\bar C(\{s,t\})$.
Such relation turns out to be exactly of the form
\[ \pr{\sigma_s}{\sigma_t}{m_{s,t}} = \pr{\sigma_t}{\sigma_s}{m_{s,t}}, \]
which means that the fundamental group of $\bsal(M)$ is naturally isomorphic to the corresponding Artin group $A$.

Given a Coxeter matrix $M$, a particular representation of the corresponding Coxeter group gives rise in a natural way to a certain topological space $N(M)$.
The $K(\pi,1)$ conjecture, due to Brieskorn \cite{brieskorn1973groupes} (for groups of finite type), Arnold, Pham, and Thom \cite{van1983homotopy} (in full generality), is the following.

\begin{conjecture}[$K(\pi,1)$ conjecture]
  The space $N(M)$ is a classifying space for the Artin group $A$ corresponding to $M$.
\end{conjecture}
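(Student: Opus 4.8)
Because the final statement is the $K(\pi,1)$ conjecture in full generality — a famous open problem — I will describe the strategy one would follow \emph{within the framework of this paper}, and isolate the precise point at which the general case resists proof while the finite-type case goes through. The plan is to exploit the homotopy equivalence $N(M)\simeq\bsal(M)$ of Salvetti together with the main result of this work, $BA^+\simeq\bsal(M)$. Since $\pi_1(\bsal(M))\cong A$, proving the conjecture amounts to showing that $\bsal(M)$ — equivalently $BA^+$ — is aspherical, i.e.\ that its universal cover is contractible. By Dobrinskaya's theorem this is the same as showing that the map $BA^+\to BA$ induced by the inclusion $A^+\hookrightarrow A$ is a homotopy equivalence; as this map is a $\pi_1$-isomorphism onto $A$ by construction, Whitehead's theorem reduces everything to contractibility of its homotopy fibre, equivalently to an acyclicity statement for homology with $\Z A$-coefficients.

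Concretely, I would first replace $\bsal(M)$ by the regular $W$-cover $\sal(M)$, on which $W$ acts freely; since a covering space shares the higher homotopy groups of its base, asphericity of $\bsal(M)$ is equivalent to asphericity of $\sal(M)$, i.e.\ to $\sal(M)$ being a $K(P,1)$ for the pure Artin group $P=\ker(A\to W)$. The natural attempt is then to build a discrete Morse function on the universal cover of $\sal(M)$ whose critical cells are concentrated in dimension $0$, forcing contractibility. In the finite-type case this is achievable: the fundamental element $\Delta$ and the normal form of Theorem \ref{thm:normal-form-artin-monoid} equip $A^+$ with a Garside (lattice) structure, out of which one constructs an explicit finite contractible model realizing $K(A,1)$ — this is Deligne's theorem, recoverable combinatorially from the normal form. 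The existence of least common multiples and greatest common divisors is exactly what makes this collapse work.

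The hard part — and the reason the conjecture is open — will be the passage to infinite type. When $S\notin\sf$ there is no global fundamental element $\Delta$ and $\Sigma$ admits no common multiple (Theorem \ref{thm:existence-fundamental-element}), so the Garside/lattice machinery is available only locally, on the finite-type subsets $T\in\sf$. One would then try to glue the local $K(\pi,1)$ data indexed by $\sf$ via a nerve or Charney–Davis-type covering argument; but controlling the higher homotopy of the resulting patchwork — equivalently ruling out that $BA^+$ acquires homotopy invisible to $BA$ — requires genuinely new geometric input, such as a $\mathrm{CAT}(0)$ or systolic structure, or a global Morse matching on the pure-Artin cover, that is not presently available. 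Thus the framework of this paper cleanly reduces the conjecture to asphericity of $\bsal(M)$ and concentrates the entire difficulty in this infinite-type gluing step, which remains the main obstacle.
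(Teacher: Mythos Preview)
The statement you were given is labelled a \emph{Conjecture} in the paper, and indeed the paper does not prove it: the $K(\pi,1)$ conjecture is presented as an open problem, and the paper's contribution is Theorem~\ref{thm:equivalence-salvetti-morse} (the homotopy equivalence $BA^+\simeq\bsal(M)$), from which Dobrinskaya's reformulation follows. There is therefore no ``paper's own proof'' to compare your proposal against.

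You correctly recognised this and, rather than fabricating a proof, gave an honest account of the strategy and of where it breaks down. Your summary is accurate and fits the paper's framework well: the reduction to asphericity of $\bsal(M)$, the role of the fundamental element $\Delta$ in the finite-type case (Deligne's theorem, Theorem~\ref{thm:conjecture-finite-type}), and the failure of the Garside machinery when $S\notin\sf$ are all to the point. One small caveat: your phrasing ``the main result of this work, $BA^+\simeq\bsal(M)$'' slightly overstates what the paper proves directly --- the paper constructs an acyclic matching on $BA^+$ whose Morse complex $Y$ is then shown to be homotopy equivalent to $\bsal(M)$; the equivalence $BA^+\simeq\bsal(M)$ is the composite. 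But this is a matter of emphasis, not an error.
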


We will not discuss further the definition of such space $N(M)$. We simply need to know that it is homotopy equivalent to the Salvetti complex $\bsal(M)$.
Therefore, the $K(\pi,1)$ conjecture is equivalent to the following.

\begin{conjecture}
  The CW complex $\bsal(M)$ is a classifying space for the corresponding Artin group $A$.
\end{conjecture}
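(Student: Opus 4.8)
The plan is to recognize this statement as a purely homotopy-theoretic reformulation of the $K(\pi,1)$ conjecture, resting on the two ingredients already in place: the homotopy equivalence $N(M)\simeq \bsal(M)$ mentioned just above, and the natural isomorphism $\pi_1(\bsal(M))\cong A$ coming from the cell structure of $\bsal(M)$ that was computed in this subsection. Recall that, for a discrete group $G$, a connected CW complex $X$ is a classifying space for $G$ precisely when $\pi_1(X)\cong G$ and $\pi_n(X)=0$ for all $n\geq 2$ (equivalently, the universal cover of $X$ is contractible). In particular, among connected CW complexes, ``being a classifying space for $G$'' is invariant under homotopy equivalence.

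First I would record that $\bsal(M)$ is a connected CW complex with $\pi_1(\bsal(M))\cong A$, exactly as established when describing its $2$-skeleton and the relations read off from its $2$-cells $\bar C(\{s,t\})$. Next I would invoke the homotopy equivalence $N(M)\simeq \bsal(M)$: a homotopy equivalence induces isomorphisms on all homotopy groups, so $\pi_n(N(M))\cong \pi_n(\bsal(M))$ for every $n\geq 0$. Hence $N(M)$ is connected with $\pi_1(N(M))\cong A$, and $\pi_n(N(M))=0$ for all $n\geq 2$ if and only if $\pi_n(\bsal(M))=0$ for all $n\geq 2$. Combining these, $N(M)$ is a classifying space for $A$ if and only if $\bsal(M)$ is. Since the $K(\pi,1)$ conjecture asserts precisely that $N(M)$ is a classifying space for $A$, the two formulations are equivalent, which is what the word ``Therefore'' in the preceding paragraph is recording.

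The point I would stress is that this argument establishes only the equivalence of the two statements, not their truth: the substantive content — that the universal cover of $\bsal(M)$, equivalently of $N(M)$, is contractible — is exactly the open $K(\pi,1)$ conjecture, currently known only for certain families of Coxeter matrices. So the ``hard part'' is not something one proves here at all; it is the conjecture itself. What the remainder of the paper can contribute is a finer understanding of $\bsal(M)$ through its relationship with the classifying space $BA^+$ of the Artin monoid and with discrete Morse theory, which is where the new input (and the alternative route to Dobrinskaya's theorem) comes from, rather than any resolution of asphericity in general.
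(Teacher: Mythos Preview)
Your proposal is correct and matches the paper's treatment exactly: this statement is presented as a \emph{conjecture}, not a theorem, and the only thing the paper asserts at this point is the equivalence with the original $K(\pi,1)$ conjecture via the homotopy equivalence $N(M)\simeq\bsal(M)$, which is precisely the argument you spell out. Your additional emphasis that this establishes only the equivalence of the two formulations, not their truth, is exactly the right reading.
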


The $K(\pi,1)$ conjecture would have important consequences in the theory of Artin groups. For instance, the existence of a finite CW model for the classifying space of $A$ implies that the homology of $A$ is finite-dimensional, and that $A$ is torsion-free (both these properties are not known in general).
The conjecture was proved only for some families of Artin groups; the most important result in this regard is probably the following.

\begin{theorem}[\cite{deligne1972immeubles}]
  The $K(\pi,1)$ conjecture holds for Artin groups of finite type.
  \label{thm:conjecture-finite-type}
\end{theorem}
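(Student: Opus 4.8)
The strategy I would follow is the geometric one of Deligne \cite{deligne1972immeubles}: prove that the universal cover of $\bsal(M)$ is contractible. Since $N(M)\simeq\bsal(M)$ and $\pi_1(\bsal(M))\cong A$ acts freely on the universal cover $X$ with quotient $\bsal(M)$, the complex $\bsal(M)$ is a $K(A,1)$ as soon as $X$ is contractible; and because $\bsal(M)$ is a CW complex, contractibility of $X$ is equivalent to $\pi_n(\bsal(M))=0$ for $n\ge 2$, so nothing beyond this is needed.

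First I would make $X$ explicit. Lifting the cellular structure of $\bsal(M)$ along $X\to\bsal(M)$ yields one cell $D(g,T)$ of dimension $|T|$ for each pair $(g,T)\in A\times\sf$, with $A$ acting by left translation on the first coordinate and $X/A=\bsal(M)$. (When $W$ is finite one can alternatively identify $X$ with the complexified complement of the reflection arrangement via Salvetti's deformation retraction \cite{salvetti1987topology}, but I would keep the argument combinatorial.) Next I would cover $X$ by contractible subcomplexes indexed by the Artin monoid: using Theorem \ref{thm:properties-fundamental-element}(vii), write each $g\in A$ as $g=\alpha\Delta^{-k}$ with $\alpha\in A^+$, and for $\alpha\in A^+$ let $X_\alpha\subseteq X$ be the subcomplex spanned by the cells $D(g,T)$ lying ``below $\alpha$'' once the ambiguity in $k$ is normalized. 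One checks that $X=\bigcup_\alpha X_\alpha$ and that each $X_\alpha$ is contractible (a union of cells hanging below a single vertex). The key point, special to the finite type case, is that $A^+$ is then a \emph{lattice} under $\leql$: greatest common divisors of arbitrary nonempty subsets exist, and since $\Sigma$ admits a least common multiple $\Delta$ (Theorem \ref{thm:existence-fundamental-element}) one has $\alpha\leql\Delta^{\ell(\alpha)}$, so every finite subset of $A^+$ has an upper bound and hence a least common multiple by Proposition \ref{prop:existence-lcm}. This lattice structure is what forces the nonempty intersections among the $X_\alpha$ — which are governed by common multiples and common divisors — and thus the nerve of the cover to be contractible, giving $X\simeq\mathrm{nerve}\simeq\ast$.

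The hard part is the bookkeeping that makes this precise: showing that the cover $\{X_\alpha\}$ is \emph{good}, i.e.\ that each $X_\alpha$ and every finite intersection $X_{\alpha_1}\cap\cdots\cap X_{\alpha_r}$ is empty or contractible. Unwinding the construction, membership $D(g,T)\in X_\alpha$ is a condition on reduced words for elements of $W$ and the braid moves relating them, so the combinatorics one must control is exactly Tits' solution of the word problem for $W$, together with the normal form for elements of $A^+$ of Theorem \ref{thm:normal-form-artin-monoid}. This is the technical core of Deligne's proof; once the shape of these intersections is identified, their contractibility and that of the nerve follow from the lattice property of $A^+$, and the theorem follows. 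An alternative for the final step is to equip $X$ (or the associated Deligne complex) with a $\mathrm{CAT}(0)$ metric built from the divisors of $\Delta$ and deduce contractibility from nonpositive curvature, but verifying the metric inequalities carries essentially the same combinatorial burden.
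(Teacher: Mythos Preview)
The paper does not prove this theorem at all: it is quoted from \cite{deligne1972immeubles} as a known result and then used as a black box in the proof of Theorem~\ref{thm:equivalence-salvetti-morse} (to conclude that $\bsal(M_1)$ is a $K(A_{M_1},1)$ for each finite-type parabolic $M_1$). So there is no ``paper's own proof'' to compare against.

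As for your outline itself, it is a faithful sketch of the Deligne strategy in its modern, combinatorial incarnation (cover the universal cover of $\bsal(M)$ by contractible pieces indexed by $A^+$, exploit the lattice structure of $(A^+,\leql)$ available only in finite type, and apply a nerve argument), and you correctly identify where the real work lies. But it remains a strategy rather than a proof: the definition of $X_\alpha$ is left imprecise (``lying below $\alpha$ once the ambiguity in $k$ is normalized''), and the assertions that each $X_\alpha$ and each finite intersection $X_{\alpha_1}\cap\cdots\cap X_{\alpha_r}$ is contractible are exactly the substantial statements, not consequences one can wave through. In Deligne's original argument this is the content of his analysis of galleries and of the simple connectedness of the ``building-like'' pieces; in later treatments it is packaged differently (Garside structures, Bestvina's normal forms, Charney--Davis $\mathrm{CAT}(0)$ metrics), but in every version the verification is nontrivial and case-independent only after serious preliminary work. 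Your final paragraph acknowledges this, so the honest summary is: correct plan, but the proof is precisely the part you defer.
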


\subsection{Discrete Morse theory}

Discrete Morse theory is a powerful tool for simplifying CW complexes while preserving their homotopy type.
It was first developed by Forman \cite{forman1998morse}, who presented it as a combinatorial analogue of Morse theory. Forman's version of discrete Morse theory, based on the concept of discrete Morse function, was later reformulated by Chari and Batzies in terms of acyclic matchings \cite{chari2000discrete,batzies2002discrete}.
Here we will briefly present the latter formulation, which we will use later.

Let $X$ be a CW complex.
Recall that each cell of $X$ has a characteristic map $\Phi\colon D^n\to X$ and an attaching map $\varphi\colon S^{n-1}\to X$, where $\varphi = \Phi|_{\partial D^n}$.

\begin{definition}
  The \emph{face poset} of $X$ is the set $\xstar$ of its open cells together with the partial order defined by $\sigma \leq \tau$ if $\bar\sigma \subseteq \bar\tau$.
\end{definition}

\begin{definition}[\cite{forman1998morse}]
  Let $\sigma,\tau\in \xstar$.
  If $\dim\tau= \dim\sigma+1$ and $\sigma \leq \tau$ we say that $\sigma$ is a \emph{face} of $\tau$.
  We say that $\sigma$ is a \emph{regular face} of $\tau$ if, in addition, the two following conditions hold (set $n=\dim\sigma$ and let $\Phi$ be the characteristic map of $\tau$):
  \begin{enumerate}[(i)]
    \item $\Phi|_{\Phi^{-1}(\sigma)}\colon \Phi^{-1}(\sigma) \to \sigma$ is a homeomorphism;
    \item $\smash{\overline{\Phi^{-1}(\sigma)}}$ is a closed metric $n$-ball in $D^{n+1}$.
  \end{enumerate}
\end{definition}

\begin{definition}
  $X$ is a \emph{regular CW complex} if all its faces are regular.
  \label{def:regular-cw-complex}
\end{definition}


\begin{definition}
  The \emph{cell graph} $G_X$ of $X$ is the Hasse diagram of $\xstar$, i.e.\ a directed graph with $\xstar$ as set of vertices and an edge from $\tau$ to $\sigma$ (written $\tau\to\sigma$) if $\sigma$ is a face of $\tau$.
  Denote the set of edges of $G_X$ by $E_X$.
  \label{def:complex-graph}
\end{definition}

\begin{definition}
  A \emph{matching} on $X$ is a subset $\M\subseteq E_X$ such that
  \begin{enumerate}[(i)]
    \item if $(\tau\to\sigma)\in \M$, then $\sigma$ is a regular face of $\tau$;
    \item any cell of $X$ occurs in at most one edge of $\M$.
  \end{enumerate}
\end{definition}

Given a matching $\M$ on $X$, define a graph $G_X^\M$ obtained from $G_X$ by inverting all the edges in $\M$.

\begin{definition}
  A matching $\M$ on $X$ is \emph{acyclic} if the corresponding graph $G_X^\M$ is acyclic.
\end{definition}

The aim of discrete Morse theory is to construct, from a CW complex $X$ with an acyclic matching $\M$, a simpler CW complex $X_\M$ (called \emph{Morse complex}) homotopy equivalent to $X$ but with fewer cells.

\begin{definition}
  Let $\M$ be an acyclic matching on $X$.
  A cell of $X$ is \emph{$\M$-essential} if it doesn't occur in any edge of $\M$.
\end{definition}

\begin{definition}
  Let $(P,\leq)$ be a poset. A \emph{$P$-grading} on $X$ is a poset map $\eta\colon \xstar\to P$.
  Given a $P$-grading on $X$, for any $p\in P$ denote by $X_{\leq p}$ the subcomplex of $X$ consisting of all the cells $\sigma$ such that $\eta(\sigma)\leq p$.
\end{definition}

\begin{definition}
  A $P$-grading on $X$ is \emph{compact} if $X_{\leq p}$ is compact for all $p\in P$.
\end{definition}

\begin{definition}
  Let $\M$ be an acyclic matching on $X$ and $\eta$ a $P$-grading on $X$.
  We say that $\M$ and $\eta$ are \emph{compatible} if $\eta(\sigma)=\eta(\tau)$ for all $(\tau\to\sigma)\in \M$.
  In other words, the matching $\M$ can be written as union of matchings $\M_p$ for $p\in P$, where each $\M_p$ is a matching on the fiber $\eta^{-1}(p)$.
\end{definition}

\begin{theorem}[\cite{batzies2002discrete}]
  Let $X$ be a CW complex with an acyclic matching $\M$ and a compact $P$-grading $\eta$, such that $\M$ and $\eta$ are compatible.
  \begin{enumerate}[(i)]
    \item There exist a CW complex $X_\M$ and a homotopy equivalence $f_\M \colon X \to X_\M$.
    \item The $n$-cells of $X_\M$ are in one-to-one correspondence with the $\M$-essential $n$-cells of $X$.
  \end{enumerate}
  In addition, the construction of the couple $(X_\M, f_\M)$ is natural with respect to inclusion: let $Y$ be a subcomplex of $X$ such that, if $(\tau\to\sigma)\in \M$ and $\sigma\in \ystar$, then $\tau\in \ystar$; then $Y_{\M'}$ is naturally included in $X_\M$ and the diagram
  \begin{center}
  \begin{tikzcd}
    Y \arrow{r}{f_{\M'}} \arrow[hookrightarrow]{d}{}
    & Y_{\M'} \arrow[hookrightarrow]{d}{} \\
    X \arrow{r}{f_{\M}}
    & X_{\M}
  \end{tikzcd}
  \end{center}
  is commutative, where $\M'$ is the restriction of $\M$ to $G_Y$.
  The CW complex $X_\M$ is called \emph{Morse complex} of $X$ with respect to the acyclic matching $\M$.
  \label{thm:discrete-morse-theory}
\end{theorem}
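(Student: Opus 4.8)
The plan is to derive the compact case from Forman's fundamental theorem of discrete Morse theory \cite{forman1998morse} --- phrased, following Chari \cite{chari2000discrete}, in terms of acyclic matchings --- strengthened so as to include the stated naturality, and then to bootstrap to an arbitrary $X$ by means of the compact grading $\eta$.

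\emph{Compact case with naturality.} Assume $X$ is finite. Since $\M$ is acyclic it is the matching of some discrete Morse function $f\colon\xstar\to\R$, which we may take injective. Forman's argument sweeps through the associated filtration of $X$ by level subcomplexes: as the regulating parameter increases, $X$ is assembled one event at a time, an event being either the crossing of a critical value --- which attaches a single cell, one per $\M$-essential cell of the corresponding dimension --- or the crossing of a regular value --- which is an elementary collapse removing a matched pair $(\tau\to\sigma)\in\M$, legitimate because $\sigma$ is a regular face of $\tau$ and, at that stage of the filtration, a free one. I would build $X_\M$ and $f_\M$ by induction over these events: at a collapse step, leave $X_\M$ unchanged and postcompose the map constructed so far with the collapse retraction; at an attaching step, use cellular approximation to replace the attaching map by a cellular map homotopic to its composite with the map constructed so far, attach a cell along it, and extend the homotopy equivalence using the standard fact that attaching cells along homotopic maps yields homotopy equivalent spaces. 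This produces a CW complex $X_\M$ with one $k$-cell per $\M$-essential $k$-cell of $X$, together with a homotopy equivalence $f_\M\colon X\to X_\M$. For the naturality clause, observe that if $Y\subseteq X$ satisfies the hypothesis in the statement, then $f|_Y$ is a discrete Morse function on $Y$ whose level filtration is a subfiltration of that of $X$; hence the whole inductive construction restricts to $Y$, yielding $Y_{\M'}\subseteq X_\M$ and the commuting square.

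\emph{General case.} Compatibility of $\M$ with $\eta$ forces each $X_{\leq p}$ to be closed under $\M$: if $\sigma\in X_{\leq p}$ is matched with $\tau$, then $\eta(\tau)=\eta(\sigma)\leq p$ and so $\tau\in X_{\leq p}$. Hence $\M$ restricts to an acyclic matching $\M_{\leq p}$ on the complex $X_{\leq p}$, which is compact by hypothesis, and the $\M_{\leq p}$-essential cells of $X_{\leq p}$ are exactly the $\M$-essential cells lying in $X_{\leq p}$; moreover $X=\bigcup_{p\in P}X_{\leq p}$. Applying the compact case (with its naturality) to each $X_{\leq p}$, the complexes $(X_{\leq p})_{\M_{\leq p}}$ and the maps $f_{\M_{\leq p}}$ assemble into a compatible system over $P$, which one glues to obtain $X_\M$ and $f_\M$ for $X$; the count of cells and the homotopy equivalence survive this passage to the limit, and naturality for a general $\M$-closed subcomplex follows by performing the same construction over the inclusions $Y_{\leq p}\hookrightarrow X_{\leq p}$.

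I expect the crux to lie in the compact case, and within it in the verification that every regular value is crossed by an \emph{honest} elementary collapse --- that is, that the matched face $\sigma$ is free in the level subcomplex in which it first appears. This is exactly the point at which acyclicity of $\M$ is indispensable: it is what allows the events to be totally ordered with the monotonicity required of a discrete Morse function. The remaining tasks --- choosing the cell-by-cell homotopy equivalences coherently so that $X_\M$ is genuinely a CW complex, and the bookkeeping needed for naturality and for the gluing in the non-compact case --- are then routine.
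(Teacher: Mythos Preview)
The paper does not prove this theorem: it is quoted from \cite{batzies2002discrete} and stated without proof, so there is no argument in the paper to compare your proposal against. Your sketch is a reasonable outline of the Forman--Chari--Batzies approach (discrete Morse function $\Rightarrow$ elementary collapses and cell attachments in the finite case, then a colimit over the compact grading), and it correctly identifies acyclicity as the ingredient that lets one linearly order the events so that every matched pair is a genuine free-face collapse at the moment it appears.

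If you want to tighten the sketch, the two places that most need care are: (i) in the compact case, ensuring the inductive choices of attaching maps are made \emph{once and for all} so that the resulting $X_\M$ is a fixed CW complex and the naturality square literally commutes (rather than commuting only up to homotopy); and (ii) in the passage to general $X$, the system $\{(X_{\leq p})_{\M_{\leq p}}\}_{p\in P}$ is indexed by a poset that need not be directed, so ``assemble into a compatible system'' hides the verification that the construction on $X_{\leq p}$ agrees with that on $X_{\leq q}$ over $X_{\leq p}\cap X_{\leq q}$ --- this is exactly what the naturality clause from the compact case is for, but it should be invoked explicitly.
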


Let us finally present a lemma which is useful for constructing acyclic matchings.

\begin{lemma}
  Let $\M$ be a matching on $X$ and let $\eta$ be a $P$-grading on $X$ compatible with $\M$. Let $\M_p$ be the restriction of $\M$ to the fiber $\eta^{-1}(p)$, for all $p\in P$. If $\M_p$ is acyclic for all $p\in P$, then $\M$ is also acyclic.
  \label{lemma:acyclic-matching}
\end{lemma}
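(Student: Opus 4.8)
The plan is to argue by contradiction: assuming that $G_X^\M$ contains a directed cycle, I will show that such a cycle must in fact be confined to a single fiber $\eta^{-1}(p)$, which contradicts the hypothesis that $\M_p$ is acyclic.

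The first step is to record a monotonicity property of $\eta$ along the edges of $G_X^\M$. Take any directed edge $a\to b$ of $G_X^\M$. By the definition of $G_X^\M$, either $a\to b$ is an edge of $G_X$ that does not belong to $\M$, or it is the reversal of an edge $b\to a$ of $G_X$ that does belong to $\M$. In the first case $b$ is a face of $a$, hence $b\leq a$ in $\xstar$, and since $\eta$ is a poset map we get $\eta(b)\leq\eta(a)$. In the second case $(b\to a)\in\M$, so compatibility of $\M$ and $\eta$ gives $\eta(b)=\eta(a)$. In either case $\eta(b)\leq\eta(a)$, so $\eta$ is non-increasing along every directed edge of $G_X^\M$.

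Now suppose $c_0\to c_1\to\cdots\to c_n=c_0$ is a directed cycle in $G_X^\M$. By the previous step $\eta(c_0)\geq\eta(c_1)\geq\cdots\geq\eta(c_n)=\eta(c_0)$, so all these inequalities are equalities and $\eta(c_i)=p$ for all $i$ and for a single element $p\in P$. Thus every cell of the cycle lies in the fiber $\eta^{-1}(p)$. Moreover every edge of the cycle is an edge of $G_X^\M$ joining two cells of $\eta^{-1}(p)$: if it is inherited from $G_X$ it is an edge of the cell graph of the fiber, and if it is the reversal of some $(c_{i+1}\to c_i)\in\M$, then, $c_i$ and $c_{i+1}$ both lying in $\eta^{-1}(p)$, that matched edge belongs to $\M_p$ (indeed, compatibility already forces $\M=\bigsqcup_{q\in P}\M_q$). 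Hence the cycle is a directed cycle in the graph obtained from the cell graph of $\eta^{-1}(p)$ by reversing the edges of $\M_p$, i.e.\ $\M_p$ is not acyclic, contradicting the hypothesis. Therefore $G_X^\M$ has no directed cycle, and $\M$ is acyclic.

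I do not expect a real obstacle here; the only point requiring care is the bookkeeping of edge orientations in $G_X^\M$ (matched edges, once reversed, raise dimension, while all other edges lower it) and checking that a cycle lying inside one fiber genuinely is a cycle of the restricted graph that witnesses acyclicity of $\M_p$.
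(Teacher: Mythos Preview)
Your argument is correct and follows essentially the same route as the paper: assume a cycle in $G_X^\M$, use that $\eta$ is non-increasing along every edge (by compatibility on matched edges and by the poset-map property on the others), conclude that $\eta$ is constant along the cycle, and derive a contradiction with the acyclicity of $\M_p$. The paper presents the cycle in its explicit alternating form $\tau_1\to\sigma_1\to\tau_2\to\cdots$ before running the inequality chain, but the logic is the same.
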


\begin{proof}
  Suppose by contradiction that the graph $G_X^\M$ contains a cycle. Since the edges in $\M$ increase the dimension by $1$ whereas all the others lower the dimension by $1$, a cycle must be of the form
  \[ \tau_1 \xrightarrow{\phantom{\M}} \sigma_1 \xrightarrow{\M} \tau_2 \xrightarrow{\phantom{\M}} \sigma_2 \xrightarrow{\M}
  \dots
  \xrightarrow{\phantom{\M}} \sigma_{k-1} \xrightarrow{\M} \tau_k \xrightarrow{\phantom{\M}} \sigma_k \xrightarrow{\M} \tau_1, \]
  where the edges labeled with $\M$ are exactly those belonging to $\M$.
  Since $\tau_i \geq \sigma_i$ in $\xstar$ we have that $\eta(\tau_i) \geq \eta(\sigma_i)$ in $P$, for all $i=1,\dots,k$.
  Moreover $\eta(\sigma_i) = \eta(\tau_{i+1})$ since $(\tau_{i+1}\to\sigma_i)\in \M$, for all $i=1,\dots,k$ (the indices are taken modulo $k$).
  Therefore
  \[ \eta(\tau_1) \geq \eta(\sigma_1) = \eta(\tau_2) \geq \eta(\sigma_2) = \dots = \eta(\tau_k) \geq \eta(\sigma_k) = \eta(\tau_1). \]
  The first and the last term of this chain of inequalities are equal, so all the terms are equal to the same element $p\in P$.
  Then this cycle is contained in the graph $G_X^{\M_p}$ and therefore $\M_p$ is not acyclic, which is a contradiction.
\end{proof}

In view of Lemma \ref{lemma:acyclic-matching}, it possible to weaken the hypothesis of Theorem \ref{thm:discrete-morse-theory} by removing the requirement of $\M$ being acyclic and asking instead that $\M_p$ is acyclic for all $p\in P$ (where $\M_p$ is the restriction of $\M$ to the fiber $\eta^{-1}(p)$).
In this way the $P$-grading $\eta$ is used to obtain both compactness and acyclicity.

\section{The classifying space of monoids}

We are now going to introduce the notion of classifying space of a monoid, as a particular case of the classifying space of a small category (viewing a monoid as a category with one object) \cite{segal1973configuration}.

\begin{definition}
  The \emph{classifying space} $BM$ of a monoid $M$ is the geometric realization of the following simplicial set. The $n$-simplices are given by the sequences $(x_1,\dots,x_n)$ of $n$ elements of $M$, denoted by the symbol $[x_1|\dots|x_n]$.
  The face maps send an $n$-simplex $[x_1|\dots|x_n]$ to the simplices $[x_2|\dots|x_n]$, $[x_1|\dots|x_ix_{i+1}|\dots|x_n]$ for $i=1,\dots,n-1$, and $[x_1|\dots|x_{n-1}]$.
  The degeneracy maps send $[x_1|\dots|x_n]$ to $[x_1|\dots|x_i|1|x_{i+1}|\dots |x_n]$ for $i=0,\dots,n$.
  \label{def:classifying-space}
\end{definition}

As shown in \cite{milnor1957geometric}, the geometric realization of a simplicial set is a CW complex having a $n$-cell for each non-degenerate $n$-simplex.
Therefore the classifying space of a monoid is a CW complex having as $n$-cells the simplices $[x_1|\dots|x_n]$ with $x_i\neq 1$ for all $i$.
Notice also that $BM$ has only one $0$-cell denoted by $[\,]$.

\begin{definition}
  The \emph{group of fractions} of a monoid $M$ is a group $G$ together with a homomorphism $M\to G$ satisfying the following universal property: for any group $H$ and homomorphism $M\to H$, there exists a unique homomorphism $G\to H$ which makes the following diagram commutative.
  \begin{center}
  \begin{tikzcd}
  M \arrow{r} \arrow{d} & G \arrow[dashed]{dl}\\
  H 
  \end{tikzcd}
  \end{center}
\end{definition}

\begin{remark}
  If any presentation of $M$ is given, then the group of fractions $G$ of $M$ is the group with the same presentation.
  \label{rmk:presentation-groupification}
\end{remark}

\begin{remark}
  The fundamental group of the classifying space $BM$ of a monoid $M$ is given by the group of fractions $G$ of $M$.
  This can be easily seen using the well-known presentation of the fundamental group of a CW complex with one $0$-cell: generators are given by the $1$-cells, and relations are given by the attaching maps of the $2$-cells.
  In our case the generator set is $\{[x]\mid x\in M, \, x\neq 1\}$ and the relation corresponding to the $2$-cell $[x|y]$ is given by $[x][y][xy]^{-1}$ if $xy\neq 1$ and $[x][y]$ if $xy=1$.
  This is indeed a presentation for the group of fractions $G$ of $M$, by Remark \ref{rmk:presentation-groupification}.
  \label{rmk:fundamental-group-BM}
\end{remark}

Before focusing on the case of Artin monoids, we are going to give an explicit construction for the universal cover of $BM$ for any monoid $M$ that injects in its group of fractions $G$ (i.e.\ when the natural map $M\to G$ is injective).
This construction generalizes the one of Example 1B.7 in \cite{hatcher}.
Let $EM$ be the geometric realization of the simplicial set whose $n$-simplices are given by the $(n+1)$-tuples $[g|x_1|\dots|x_n]$, where $g\in G$ and $x_i\in M$.
The $i$-th face map sends $[g|x_1|\dots|x_n]$ to
\[
  \begin{cases}
    [gx_1|x_2|\dots|x_n] & \text{for $i=0$}; \\
    [g|x_1|\dots|x_ix_{i+1}|\dots|x_n] & \text{for $1\leq i\leq n-1$}; \\
    [g|x_1|\dots|x_{n-1}] & \text{for $i=n$}.
  \end{cases}
\]
The $i$-th degeneracy map sends $[g|x_1|\dots|x_n]$ to $[g|x_1|\dots|x_i|1|x_{i+1}|\dots|x_n]$ for all $i=0,\dots,n$.
Notice that the vertices of $EM$ are in bijection with the group $G$, and that the vertices of an $n$-simplex $[g|x_1|\dots|x_n]$ are of the form $[gx_1\cdots x_i]$ for $i=0,\dots,n$.
The group $G$ acts freely and simplicially on $EM$ by left multiplication: an element $h\in G$ sends the simplex $[g|x_1|\dots|x_n]$ to the simplex $[hg|x_1\dots|x_n]$. Thus the quotient map $EM\to EM/G$ is a covering map.

\begin{lemma}
  $EM/G$ is naturally homeomorphic to $BM$.
  \label{lemma:group-action-on-EM}
\end{lemma}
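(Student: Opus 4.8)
The plan is to carry out the identification at the level of simplicial sets and only then pass to geometric realizations. Write $E$ for the simplicial set with $E_n = G\times M^n$ whose realization is $EM$, and $B$ for the simplicial set with $B_n = M^n$ whose realization is $BM$. First I would check that left multiplication of $G$ on the first coordinate is an action by simplicial automorphisms of $E$: the face map $d_0[g|x_1|\dots|x_n] = [gx_1|x_2|\dots|x_n]$ is $G$-equivariant because $h\cdot(gx_1)=(hg)x_1$, while every other face map and every degeneracy map leaves the $G$-coordinate untouched and is thus trivially equivariant. Consequently the degreewise quotient $E/G$ is again a simplicial set and $E\to E/G$ is a simplicial map.

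Next I would identify $E/G$ with $B$. The forgetful map $p\colon E\to B$ sending $[g|x_1|\dots|x_n]$ to $[x_1|\dots|x_n]$ is a simplicial map — compatibility with $d_0$ holds because $p[gx_1|x_2|\dots|x_n]=[x_1|x_2|\dots|x_n]$ is the $d_0$-face of $[x_1|\dots|x_n]$ in $B$, and the remaining faces and the degeneracies are immediate — and it is visibly $G$-invariant. In each degree $n$ the map $p_n\colon G\times M^n\to M^n$ is the projection onto the second factor, hence surjective, and its fibres are exactly the $G$-orbits: the orbit of $[g|x_1|\dots|x_n]$ is the whole fibre $\{[g'|x_1|\dots|x_n]\mid g'\in G\}$, on which $G$ acts freely. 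So $p$ descends to a simplicial map $\bar p\colon E/G\to B$ that is bijective in every degree, and a degreewise bijective simplicial map is automatically an isomorphism of simplicial sets.

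Finally I would transfer this to topological spaces. The topological quotient $EM/G=|E|/G$ is the coequalizer of the action and projection maps $G\times|E|\rightrightarrows|E|$; since $G$ is discrete, $G\times E$ is the coproduct $\coprod_{g\in G}E$ of copies of $E$, and geometric realization, being a left adjoint, preserves this coproduct and the coequalizer, so $|E|/G$ is the realization of the corresponding coequalizer of simplicial sets, i.e. $|E|/G\cong|E/G|$. Combining with the previous step, $EM/G=|E|/G\cong|E/G|\cong|B|=BM$, and the homeomorphism is the one induced by $p$, hence canonical (in particular natural in $M$). Alternatively one can avoid this categorical input and argue directly: the $G$-action on the CW complex $EM$ is free, cellular, and permutes the open cells while preserving barycentric coordinates, so $EM/G$ inherits a CW structure whose cells are the $G$-orbits of cells of $EM$, that is the non-degenerate simplices $[x_1|\dots|x_n]$ of $BM$, with the same attaching data. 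The one point deserving a little care is precisely the commutation $|E|/G\cong|E/G|$ of realization with the quotient by $G$ — this is where discreteness of $G$ (and the already-recorded fact that $EM\to EM/G$ is a covering) enters — and everything else is routine verification.
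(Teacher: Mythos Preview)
Your proof is correct and follows essentially the same route as the paper: identify the simplicial sets $E/G$ and $B$ via the map that forgets the $G$-coordinate (the paper phrases this inversely, sending $[x_1|\dots|x_n]$ to the class of $[1|x_1|\dots|x_n]$), and deduce the homeomorphism on realizations. You are more careful than the paper in isolating and justifying the step $|E|/G\cong|E/G|$, which the paper leaves implicit.
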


\begin{proof}
  We will make use of the fact that taking the quotient by the action of $G$ commutes with geometric realization.
  A simplex $[x_1|\dots|x_n]$ of $BM$ can be identified with the equivalence class of the simplex $[1|x_1|\dots|x_n]$ of $EM/G$. This identification is bijective and respects face maps and degeneracy maps, so it is an isomorphism of simplicial sets.
  Therefore it yields a homeomorphism at the level of geometric realizations.
\end{proof}

\begin{proposition}
  The space $EM$ is the universal cover of $BM$, with the natural covering map $p\colon EM\to BM$ obtained composing the quotient map $EM\to EM/G$ and the homeomorphism $EM/G\to BM$ of Lemma \ref{lemma:group-action-on-EM}.
  \label{prop:EM-universal-cover-of-BM}
\end{proposition}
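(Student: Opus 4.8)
The plan is to verify the two things a covering space needs in order to be \emph{the} universal cover: connectedness and simple connectedness. The map $EM\to EM/G$ is already known to be a covering, and $EM/G$ is naturally homeomorphic to $BM$ by Lemma \ref{lemma:group-action-on-EM}, so $p\colon EM\to BM$ is a covering map; it remains to show that $EM$ is connected and has trivial fundamental group, after which $p$ is forced to be the universal cover, which is exactly the content of Proposition \ref{prop:EM-universal-cover-of-BM}.

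For connectedness, recall that the $0$-cells of $EM$ are the vertices $[g]$, $g\in G$, and that for every $g\in G$ and $x\in M$ there is an edge $[g|x]$ from $[g]$ to $[gx]$. By the universal property of the groupification, $G$ is generated by the image of $M$: the subgroup $H\le G$ generated by that image receives a homomorphism from $M$, hence (by the universal property) a retraction $G\to H$, which by uniqueness must be the identity, so $H=G$. Thus any $g\in G$ is a product of images of elements of $M$ and their inverses, and concatenating the corresponding edges $[h|x]$ and their reverses gives a path in the $1$-skeleton of $EM$ from $[1]$ to $[g]$. Hence $EM$ is path-connected.

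For simple connectedness it suffices, by the lifting criterion, to show $p_*\pi_1(EM,[1])=1$, i.e.\ that a loop in $BM$ at $[\,]$ whose lift starting at $[1]$ is again a loop must be nullhomotopic. By Remark \ref{rmk:fundamental-group-BM} (together with Remark \ref{rmk:presentation-groupification}), $\pi_1(BM,[\,])$ is generated by the loops $[x]$, $x\in M\setminus\{1\}$, and the isomorphism $\phi\colon\pi_1(BM,[\,])\xrightarrow{\;\sim\;}G$ sends $[x]$ to the image of $x$ in $G$. Under $p$ the $1$-cell $[1|x]$ of $EM$ maps onto the loop $[x]$, so the lift of $[x]$ at $[1]$ is the edge $[1|x]$, ending at the vertex $[x]\in G$; similarly the lift of $\overline{[x]}$ at $[1]$ is the reverse of the edge $[x^{-1}|x]$ and ends at $[x^{-1}]$. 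Concatenating these, the lift starting at $[1]$ of a loop representing $\gamma\in\pi_1(BM,[\,])$ ends at the vertex $[\phi(\gamma)]$; hence it is a loop exactly when $\phi(\gamma)=1$, i.e.\ when $\gamma=1$. Therefore $p_*\pi_1(EM,[1])$ is trivial, and since $p_*$ is injective, $\pi_1(EM)=1$. Combined with the previous step, $EM$ is a connected, simply connected covering of $BM$, hence the universal cover.

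The one genuinely delicate point is the third step: one must check that the \emph{combinatorial} isomorphism $\pi_1(BM)\cong G$ extracted from the CW presentation is compatible with \emph{path-lifting} in $p$ — equivalently, that it sends a loop class $\gamma$ to the unique $g\in G$ for which the lift of $\gamma$ at $[1]$ terminates at the vertex $[g]$ (phrased differently, that the monodromy homomorphism of this normal covering agrees with $\phi$). Once the bookkeeping of where lifted edges and reversed edges terminate is written out, this is routine; everything else — connectedness, injectivity of $p_*$, and the conclusion that a simply connected cover is the universal cover — is formal.
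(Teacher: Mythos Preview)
Your proof is correct and follows essentially the same route as the paper: lift a loop in $BM$ edge by edge, observe that the terminal vertex of the lift starting at $[1]$ is $[\phi(\gamma)]$ under the isomorphism $\phi\colon\pi_1(BM,[\,])\to G$ of Remark~\ref{rmk:fundamental-group-BM}, and conclude via injectivity of $p_*$ that $\pi_1(EM,[1])=1$. The only difference is that you also spell out the connectedness of $EM$, which the paper leaves implicit; this is a welcome addition, since ``universal cover'' requires it.
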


\begin{proof}
  We have already seen that $p$ is indeed a covering map.
  Therefore it is enough to show that $EM$ is simply connected.
  Choose $[\,]$ and $[1]$ as basepoints for $BM$ and $EM$, respectively, so that $p\colon (EM,*)\to (BM,*)$ becomes a basepoint-preserving covering map.
  An element $c$ of $\pi_1(BM,*)$ can be represented as a signed sequence $(\epsilon_1[x_1], \dots, \epsilon_k[x_k])$ of $1$-cells, where the sign $\epsilon_i = \pm 1$ indicates whether the arc $[x_i]$ is traveled in positive or negative direction.
  If we lift such path to the covering space $EM$, we obtain a path passing through the vertices $[1], [x_1^{\epsilon_1}], [x_1^{\epsilon_1} x_2^{\epsilon_2}], \dots, [x_1^{\epsilon_1}x_2^{\epsilon_2}\cdots x_k^{\epsilon_k}]$.
  Notice that under the isomorphism $\pi_1(BM,*) \cong G$ of Remark \ref{rmk:fundamental-group-BM} the path $c$ corresponds precisely to $x_1^{\epsilon_1}x_2^{\epsilon_2}\cdots x_k^{\epsilon_k}$.
  This means that if $c$ is non-trivial in $\pi_1(BM,*)$ then it lifts to a non-closed path in $EM$.
  Since $p_*\colon \pi_1(EM,*)\to \pi_1(BM,*)$ is injective, we can conclude that $\pi_1(EM,*)$ is trivial.
\end{proof}

The space $EM$ has a particular subcomplex $E^+M$ consisting of all the cells $[g|x_1|\dots|x_n]$ such that $g\in M$.
In analogy to the case when $M$ is a group (for which $EM=E^+M$), we prove that $E^+M$ is contractible.

\begin{proposition}
  The space $E^+M$ is contractible.
  \label{prop:positive-EM-contractible}
\end{proposition}

\begin{proof}
  Let $\mathcal{C}$ be the category with objects given by the elements of $M$ and morphisms $x\to y$ whenever $x \leql y$.
  Then $E^+M$ can be regarded as the geometric realization of the nerve of $\mathcal{C}$.
  Since $\mathcal{C}$ has an initial object, its nerve is contractible.
\end{proof}

\section{The Salvetti complex and Artin monoids}

In \cite{dobrinskaya2006configuration}, Dobrinskaya proved that the $K(\pi,1)$ conjecture can be reformulated as follows.

\begin{theorem}[\cite{dobrinskaya2006configuration}]
  The $K(\pi,1)$ conjecture holds for an Artin group $A$ if and only if the natural map $BA^+\to BA$ is a homotopy equivalence.
  \label{thm:dobrinskaya}
\end{theorem}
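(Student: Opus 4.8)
The plan is to reduce the statement to a comparison of two spaces and then to produce an explicit homotopy equivalence $BA^+\simeq\bsal(M)$ by discrete Morse theory.

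First I would dispose of the two ``formal'' equivalences. Since $A^+$ and $A$ have the same presentation, Remarks \ref{rmk:presentation-groupification} and \ref{rmk:fundamental-group-BM} show that the natural map $BA^+\to BA$ induces an isomorphism $\pi_1(BA^+)\to\pi_1(BA)=A$; moreover $BA$ is a $K(A,1)$, because Proposition \ref{prop:positive-EM-contractible} applied to the group $M=A$ (for which $EA=E^+A$) shows that the universal cover $EA$ of $BA$ is contractible. Since a $\pi_1$-isomorphism between aspherical CW complexes is a homotopy equivalence, $BA^+\to BA$ is a homotopy equivalence if and only if $BA^+$ is aspherical. On the other side, $\pi_1(\bsal(M))=A$ and $N(M)\simeq\bsal(M)$, so the $K(\pi,1)$ conjecture holds if and only if $\bsal(M)$ is a $K(A,1)$, i.e.\ if and only if $\bsal(M)$ is aspherical. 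Hence it suffices to prove that $BA^+$ and $\bsal(M)$ are homotopy equivalent: since both have fundamental group $A$, one will then be aspherical exactly when the other is.

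To build the homotopy equivalence $BA^+\simeq\bsal(M)$ I would apply discrete Morse theory (Theorem \ref{thm:discrete-morse-theory}) to the standard CW model of $BA^+$, whose $n$-cells are the symbols $[x_1|\dots|x_n]$ with $x_i\in A^+\setminus\{1\}$. I would grade this complex by total length, $\eta([x_1|\dots|x_n])=\ell(x_1)+\dots+\ell(x_n)\in\N$; this is a compact grading and each fiber $\eta^{-1}(k)$ is finite, so by Lemma \ref{lemma:acyclic-matching} the acyclicity of a compatible matching can be checked one fiber at a time. The matching itself I would define using left divisibility and the normal form of Theorem \ref{thm:normal-form-artin-monoid}, splitting off from each cell the ``new'' part of its first block and pairing $[x_1|\dots|x_n]$ with a cell of one higher or one lower dimension accordingly; the $\M$-essential cells should then be exactly those already in reduced form, and these should be in bijection with (chains in) $\sf$, the essential cell associated with $T\in\sf$ having dimension $|T|$. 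Theorem \ref{thm:discrete-morse-theory} would then give a Morse complex, naturally homotopy equivalent to $BA^+$, with one cell for each element of $\sf$; comparing attaching maps --- e.g.\ in dimension $2$ the boundary of the cell of $\{s,t\}$ must spell out $\pr{\sigma_s}{\sigma_t}{m_{s,t}}=\pr{\sigma_t}{\sigma_s}{m_{s,t}}$ --- one identifies it with $\bsal(M)$. A more conceptual variant: cover the universal cover $EA^+$ of $BA^+$ (Proposition \ref{prop:EM-universal-cover-of-BM}) by the $A$-translates of the contractible subcomplex $E^+A^+$ (Proposition \ref{prop:positive-EM-contractible}), note that their nonempty finite intersections are again translates of $E^+A^+$ indexed by least common multiples (Proposition \ref{prop:existence-lcm}), and apply the nerve lemma to identify $EA^+$ with the universal cover of $\bsal(M)$ described in Lemma \ref{lemma:cells-salvetti-complex}.

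The hard part is exactly this middle step: writing down a \emph{specific} acyclic matching on $BA^+$, verifying its compatibility with the length grading and its fiberwise acyclicity, and --- most of all --- checking that the resulting Morse complex is genuinely $\bsal(M)$, with the correct attaching maps, rather than merely a complex with the right number of cells in each dimension. This demands careful bookkeeping with left and right cancellativity in $A^+$, with the existence and uniqueness of greatest common divisors and least common multiples, and with the normal form of Theorem \ref{thm:normal-form-artin-monoid}; it is precisely the work that lets one pass from the infinite-dimensional model $BA^+$ to the finite model $\bsal(M)$, after which Dobrinskaya's theorem follows from the reductions of the first paragraph.
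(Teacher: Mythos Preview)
Your reduction in the first paragraph matches the paper exactly, and your plan to collapse $BA^+$ via discrete Morse theory to a complex $Y$ with one cell per element of $\sf$ is precisely what the paper does (using Ozornova's matching $\M=\M_1\cup\M_2$, graded by length together with a $\mu_1$-essential flag). Where your proposal has a genuine gap is the final identification of the Morse complex $Y$ with $\bsal(M)$.

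You write that one ``compares attaching maps'' and give the dimension-$2$ case as an example. The paper does carry out exactly that computation in dimension $2$ (Lemma~\ref{lemma:boundary-2-cell}), but for $n\geq 3$ it does \emph{not} attempt to match attaching maps directly --- and there is no reason to expect that the attaching map of the essential $n$-cell in $Y$ literally agrees with that of $\bar C(T)$ in $\bsal(M)$. Instead the paper builds the homotopy equivalence $\psi\colon Y\to\bsal(M)$ (together with its inverse and the two homotopies) inductively over a filtration by subsets $\F\subseteq\sf$ closed under inclusion, and for each new cell $e_T$ with $|T|=n\geq 3$ it uses that both $Y_{T^f}$ and $\bsal_{T^f}(M)$ are already known to be $K(A_T,1)$'s: the first by Corollary~\ref{cor:finite-type-classifying-space} (which exhausts $EA_T^+$ by translates of the contractible $E^+A_T^+$ using the fundamental element $\Delta_T$), the second by Deligne's theorem (Theorem~\ref{thm:conjecture-finite-type}). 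The vanishing of $\pi_{n-1}$ and $\pi_n$ of these subcomplexes is what allows $\psi$, $\psi'$ and the homotopies to be extended over the new cell. This obstruction-theoretic step, and in particular the appeal to the finite-type $K(\pi,1)$ theorem on \emph{both} sides, is the missing ingredient in your outline; ``careful bookkeeping with the normal form'' will not by itself pin down the higher attaching maps.

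Your alternative nerve-lemma sketch is suggestive but, as written, not correct: the intersection $gE^+A^+\cap hE^+A^+$ is controlled by whether $g^{-1}h\in (A^+)^{-1}A^+$, which is the Ore condition and fails outside finite type, so nonempty finite intersections are not simply indexed by least common multiples in $A^+$, and the nerve of this cover is not visibly $\sal(M)$.
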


Dobrinskaya's theorem is particularly interesting since it relates the $K(\pi,1)$ conjecture to the problem of determining when the natural map $M\to G$ between a monoid and its group of fractions induces a homotopy equivalence $BM\to BG$ between the corresponding classifying spaces. Such phenomenon is known to happen in some cases (see \cite{mcduff1976homology}), but the general problem is still open.

To prove Theorem \ref{thm:dobrinskaya}, Dobrinskaya also proved the following result.

\begin{theorem}[\cite{dobrinskaya2006configuration}]
  The space $N(M)$ is homotopy equivalent to the classifying space $BA^+$ of the Artin monoid $A^+$ corresponding to $M$.
  \label{thm:dobrinskaya2}
\end{theorem}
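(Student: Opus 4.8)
The plan is to reduce the statement to a comparison between $BA^+$ and the Salvetti complex, and then to produce that comparison by discrete Morse theory. Since the preliminaries record that $N(M)$ is homotopy equivalent to $\bsal(M)$, it is enough to exhibit a homotopy equivalence $BA^+\simeq\bsal(M)$. I would obtain it by equipping the standard simplicial model of $BA^+$ from Definition \ref{def:classifying-space} — whose non-degenerate $n$-cells are the tuples $[\alpha_1|\dots|\alpha_n]$ with $\alpha_i\in A^+\setminus\{1\}$ — with an acyclic matching $\M$ whose Morse complex, produced by Theorem \ref{thm:discrete-morse-theory}, carries a natural homotopy equivalence to $\bsal(M)$. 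Chaining the three equivalences then gives $N(M)\simeq\bsal(M)\simeq(BA^+)_\M\simeq BA^+$.

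The matching $\M$ is where the divisibility theory of $A^+$ enters. Fix once and for all a total order on $S$. Given a non-degenerate simplex $[\alpha_1|\dots|\alpha_n]$, one locates, reading from the left, the first entry $\alpha_i$ that is not ``reduced'' (a condition phrased in terms of divisibility and the normal form of Theorem \ref{thm:normal-form-artin-monoid}) and then, according to the local situation, either \emph{splits} it, replacing $\alpha_i$ by the consecutive pair $\sigma_s,\,\sigma_s^{-1}\alpha_i$ where $\sigma_s$ is the smallest generator dividing $\alpha_i$ (this raises the dimension by one, and is allowed precisely when $\alpha_i\neq\sigma_s$), or \emph{merges} it forward, replacing $\alpha_i,\alpha_{i+1}$ by the single entry $\alpha_i\alpha_{i+1}$ (this lowers the dimension by one). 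The two operations are arranged to be mutually inverse, so that each cell lies in at most one edge of $\M$; the $\M$-essential cells are then exactly the tuples all of whose entries are reduced, and one checks that they are organized into a complex which is $\bsal(M)$ — or at worst an intermediate ``positive Salvetti complex'' mapping to $\bsal(M)$ by a natural homotopy equivalence.

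To invoke the discrete Morse machinery I would verify the hypotheses of Theorem \ref{thm:discrete-morse-theory} in the form weakened by Lemma \ref{lemma:acyclic-matching}. The compact $P$-grading is $\eta([\alpha_1|\dots|\alpha_n])=\ell(\alpha_1\cdots\alpha_n)\in\N$: it is a poset map because every face of a simplex has product of no greater length, it is compatible with $\M$ because splitting and merging leave the total product unchanged, and each $\eta^{-1}(k)$ is finite. Acyclicity of $\M$ then reduces to acyclicity of each restriction $\M_k$, which I would prove directly: inside a fixed fiber every edge of $G_X^\M$ preserves the actual product element, and along any would-be closed path a secondary statistic (for instance the length of the first entry, or the number of entries that are single generators) strictly changes across the non-matched edges while being monotone across the matched ones, which is impossible. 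One also has to check that the faces used by $\M$ are regular: $BA^+$ is not a regular CW complex — for instance the $2$-cell $[\alpha|\alpha]$ attaches two of its edges to the same $1$-cell $[\alpha]$ — so this is a genuine if routine point, settled by noting that the relevant face maps are injective on the relevant faces, again using the length function and cancellativity in $A^+$.

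The last step, and the main obstacle, is to pin down the Morse complex $(BA^+)_\M$ and match it with $\bsal(M)$: one must compute the attaching maps of the essential cells produced by the collapse and check that they agree, up to homotopy, with the $2m_{s,t}$-gons of $\bsal(M)$. In dimensions $\le 2$ this is precisely the computation that turns the presentation of $\pi_1(BA^+)$ from Remark \ref{rmk:fundamental-group-BM} into the Artin presentation of $A$; in general one has to track how an iterated collapse through tuples of positive words reproduces the combinatorics of the Coxeter group that is encoded in the Salvetti cells, and this is where Theorem \ref{thm:normal-form-artin-monoid} and the fine structure of the elements $\Delta_T$ — Lemma \ref{lemma:fundamental-element-as-common-multiple} and Theorem \ref{thm:properties-fundamental-element} — are used in an essential way. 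Once $(BA^+)_\M$ is identified with a complex carrying a natural homotopy equivalence to $\bsal(M)$, composing with Theorem \ref{thm:discrete-morse-theory} and with the equivalence $N(M)\simeq\bsal(M)$ recalled above yields $N(M)\simeq BA^+$.
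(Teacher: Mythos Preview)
Your broad strategy---collapse $BA^+$ via an acyclic matching graded by total length, then compare the resulting Morse complex with $\bsal(M)$---is the same as the paper's. But two points deserve comment.

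First, the matching the paper actually uses is not the ``split/merge at the first non-reduced entry by peeling off the smallest left-dividing generator'' rule you describe. It is Ozornova's two-stage matching $\M=\M_1\cup\M_2$: the first stage is governed by \emph{suffix} products $x_k\cdots x_n$ landing in $\D=\{\Delta_T:T\in S^f\setminus\{\varnothing\}\}$, and the second stage reorders the resulting chains $I_1\supset\cdots\supset I_n$ so that each $I_k\setminus I_{k+1}$ is the singleton $\{\max I_k\}$. The grading needed to separate the two stages is $\N\times\{0,1\}$ (length, together with the flag ``$\mu_1$-essential or not''), not $\N$ alone. Your rule may well produce an acyclic matching with essential cells indexed by $S^f$, but it is not the one in the paper, and you have not defined ``reduced'' precisely enough to check it.

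Second, and more seriously, your plan for the hardest step---identifying the Morse complex $Y$ with $\bsal(M)$---is to ``track how an iterated collapse \dots\ reproduces the combinatorics of the Coxeter group'' via the normal form and the fine structure of the $\Delta_T$. The paper does \emph{not} do this in dimensions $\geq 3$, and I do not see how you would either: the attaching maps of the higher cells of $Y$ are produced by the Morse collapse and are only well-defined up to homotopy, so a direct combinatorial match with the Salvetti attaching maps is not available. The paper's key move is to sidestep this entirely. For each $T\in S^f$ with $|T|\geq 3$, it shows that both $Y_{T^f}$ and $\bsal_{T^f}(M)$ are Eilenberg--MacLane spaces $K(A_T,1)$: the former because $Y_{T^f}\simeq BA_T^+$ and $EA_T^+$ is contractible when $A_T$ is of finite type (this is where the fundamental element is used, via Theorem~\ref{thm:EM-contractible-finite-type} and Corollary~\ref{cor:finite-type-classifying-space}), and the latter by Deligne's theorem (Theorem~\ref{thm:conjecture-finite-type}). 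Once both sides are aspherical, the homotopy equivalence already built on the $(n-1)$-skeleta extends over the new $n$-cells and the corresponding homotopies extend as well, because the obstruction groups $\pi_{n-1}$ and $\pi_n$ vanish. Only in dimensions $1$ and $2$ is an explicit boundary computation performed (Lemma~\ref{lemma:boundary-2-cell}). Your proposal is missing both ingredients of this argument---the contractibility of $EA_T^+$ for finite-type $T$ and the appeal to Deligne---and without them the identification $Y\simeq\bsal(M)$ has a genuine gap.
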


It is quite easy to deduce Theorem \ref{thm:dobrinskaya} from Theorem \ref{thm:dobrinskaya2}.
Indeed, if the natural map $BA^+\to BA$ is a homotopy equivalence then
\[ N(M) \simeq BA^+ \simeq BA, \]
so the $K(\pi,1)$ conjecture holds for the Artin group $A$.
On the other hand, if the $K(\pi,1)$ conjecture holds for $A$ then both spaces $BA$ and $BA^+$ are classifying spaces for $A$, so the natural map $BA^+ \to BA$ must be a homotopy equivalence since it induces an isomorphism at the level of fundamental groups.

In the rest of this section we will present a new proof of Theorem \ref{thm:dobrinskaya2} based on discrete Morse theory.
Some ideas are taken from a recent work of Ozornova \cite{ozornova2017discrete}, but we will prove the stronger statement that the space $BA^+$ can be collapsed (in the sense of discrete Morse theory) to obtain a CW complex which is naturally homotopy equivalent to the Salvetti complex $\bsal(M)$.
This in particular answers some of the questions left open in \cite{ozornova2017discrete}, Section 7.

From now on, let $M$ be a Coxeter matrix and $A$ the corresponding Artin group.
When $A$ is of finite type we are able to show that $EA^+$ is contractible.
Before doing that, we recall the following classical result which be deduced from \cite{hatcher}, Corollary 4G.3.

\begin{lemma}
  Let $X$ be a CW complex, and let $\{Y_i \mid i\in\N \}$ be a family of contractible subcomplexes of $X$ such that $Y_i\subseteq Y_{i+1}$ for all $i\in \N$ and
  \[ \bigcup_{i\in\N} Y_i = X. \]
  Then $X$ is also contractible.
  \label{lemma:contractible-chain}
\end{lemma}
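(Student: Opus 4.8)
The plan is to show that $X$ is \emph{weakly} contractible and then to invoke Whitehead's theorem, using in an essential way the standard fact that a compact subset of a CW complex is contained in a finite subcomplex. This is exactly the route through which the statement is deduced from Corollary 4G.3 of \cite{hatcher}, which asserts that for a nested sequence of subcomplexes with union $X$ the natural maps $\varinjlim_i \pi_n(Y_i)\to \pi_n(X)$ are isomorphisms for all $n$.

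In detail, I would first fix a $0$-cell $x_0$ of $Y_0$ (all the $Y_i$ are nonempty, being contractible, and they are nested), to serve as basepoint for $X$ and for every $Y_i$. The key step is the following compact-supports argument, carried out for each $n\geq 0$: given a continuous map $f\colon S^n\to X$, its image is compact, hence meets only finitely many open cells and so lies in a finite subcomplex $K\subseteq X$; each of the finitely many open cells of $K$ lies in some $Y_i$ (since $X=\bigcup_i Y_i$), and by nestedness a single index $i$ works for all of them, whence $K\subseteq Y_i$ because $Y_i$ is a subcomplex. For $n=0$ this shows $X$ is path-connected. For $n\geq 1$, the based map $f$ factors through $Y_i$, and $\pi_n(Y_i,x_0)=0$ because $Y_i$ is contractible, so a based nullhomotopy exists inside $Y_i\subseteq X$ and $[f]=0$ in $\pi_n(X,x_0)$. (The same compactness argument applied to nullhomotopies $D^{n+1}\to X$ is what makes $\pi_n(X)=\varinjlim_i\pi_n(Y_i)$, the content of Corollary 4G.3.) Hence $\pi_n(X,x_0)=0$ for all $n\geq 0$, i.e. $X$ is weakly contractible. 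Since $X$ is a CW complex, the constant map $X\to\{x_0\}$ is a weak homotopy equivalence between CW complexes, hence a homotopy equivalence by Whitehead's theorem; equivalently, a weakly contractible CW complex is contractible, so $X$ is contractible.

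The only genuinely nontrivial ingredient here is the compact-supports step, which forces every sphere and every nullhomotopy to be swallowed by a finite, and therefore (after using the nesting) by a single $Y_i$; after that the argument is purely formal. I do not expect any real obstacle beyond correctly invoking this finiteness property and Whitehead's theorem.
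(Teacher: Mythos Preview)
Your proposal is correct and is exactly the intended argument: the paper does not give its own proof but simply cites \cite{hatcher}, Corollary 4G.3, and you have faithfully unpacked that citation (compactness forces any $S^n\to X$ into some $Y_i$, so $\pi_n(X)=\varinjlim_i\pi_n(Y_i)=0$, and Whitehead finishes). There is nothing to add.
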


\begin{theorem}
  If $A$ is an Artin group of finite type, then the space $EA^+$ is contractible.
  \label{thm:EM-contractible-finite-type}
\end{theorem}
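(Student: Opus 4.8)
The plan is to exhaust $EA^+$ by an increasing sequence of contractible subcomplexes and invoke Lemma~\ref{lemma:contractible-chain}. The natural candidates for the building blocks are translates of the contractible subcomplex $E^+A^+$ from Proposition~\ref{prop:positive-EM-contractible}. Recall that $EA^+$ has a $G$-action by left multiplication on the leading coordinate (here $G=A$), and $E^+A^+$ is the subcomplex of cells $[g|x_1|\dots|x_n]$ with $g\in A^+$. Translating by $\alpha\in A$ gives the subcomplex of cells with $g\in \alpha A^+$, which is again contractible (the translation is a cellular homeomorphism). The key point, available precisely in the finite type case, is Theorem~\ref{thm:properties-fundamental-element}(vii): every element of $A$ can be written as $\Delta^{-k}\beta$ with $\beta\in A^+$, i.e.\ $A=\bigcup_{k\in\N}\Delta^{-k}A^+$, and the sets $\Delta^{-k}A^+$ are nested since $\Delta^{-k}A^+\subseteq \Delta^{-(k+1)}A^+$ (because $\Delta\in A^+$, so $A^+\subseteq \Delta^{-1}A^+$ after multiplying by $\Delta^{-1}$ on the left; more precisely $\beta = \Delta^{-1}(\Delta\beta)$ with $\Delta\beta\in A^+$).

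Concretely, I would set $Y_k = \Delta^{-k}\cdot E^+A^+$, the subcomplex of $EA^+$ consisting of all cells $[g|x_1|\dots|x_n]$ with $g\in\Delta^{-k}A^+$. First I would check that each $Y_k$ is a genuine subcomplex: the face maps either leave $g$ alone or replace it by $gx_1$, and since $x_1\in A^+$ we have $gx_1\in\Delta^{-k}A^+$ whenever $g\in\Delta^{-k}A^+$; degeneracies clearly preserve the condition. Next, $Y_k$ is contractible because left multiplication by $\Delta^{-k}$ is a simplicial automorphism of $EA^+$ carrying $E^+A^+=Y_0$ onto $Y_k$, and $Y_0$ is contractible by Proposition~\ref{prop:positive-EM-contractible}. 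Then I would verify the nesting $Y_k\subseteq Y_{k+1}$, which amounts to $\Delta^{-k}A^+\subseteq \Delta^{-(k+1)}A^+$, equivalently $\Delta A^+\subseteq A^+$ (multiply on the left by $\Delta^{k+1}$), which holds since $\Delta\in A^+$. Finally, $\bigcup_k Y_k = EA^+$ because every cell $[g|x_1|\dots|x_n]$ has $g\in A$, and by Theorem~\ref{thm:properties-fundamental-element}(vii) there is some $k$ with $g\in\Delta^{-k}A^+$. Applying Lemma~\ref{lemma:contractible-chain} then gives that $EA^+$ is contractible.

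I do not expect a serious obstacle here — the argument is a direct generalization of the classical fact that $EG$ is contractible for a group $G$, with the fundamental element $\Delta$ supplying the "denominators" that are missing when one only has the monoid. The one point that deserves a careful line or two is the verification that the $Y_k$ are subcomplexes (checking closure under all face and degeneracy maps), and the observation that $\Delta^{-1}A^+\supseteq A^+$, for which one should note that finite type is exactly what guarantees the existence of $\Delta$ (Theorem~\ref{thm:existence-fundamental-element}) and the normal-form statement (vii); without $\Delta$ the union of the $Y_k$ would only recover $E^+A^+$, not all of $EA^+$.
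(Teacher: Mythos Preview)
Your proposal is correct and follows essentially the same approach as the paper: exhaust $EA^+$ by the nested contractible subcomplexes $Y_k=\Delta^{-k}E^+A^+$, using Proposition~\ref{prop:positive-EM-contractible} for contractibility of each piece, Theorem~\ref{thm:properties-fundamental-element}(vii) for exhaustion, and Lemma~\ref{lemma:contractible-chain} to conclude. Your write-up is in fact slightly more detailed than the paper's in verifying that each $Y_k$ is a subcomplex and that the nesting holds.
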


\begin{proof}
  To make the notation more readable denote the space $EA^+$ by $X$ and its subcomplex $E^+A^+$ by $X^+$.
  For any $h\in A$ the subcomplex $h X^+$ is homeomorphic to $X^+$, and therefore it is contractible by Proposition \ref{prop:positive-EM-contractible}.
  Notice also that $hX^+$ consists of all the simplices $[g|x_1|\dots|x_n]$ of $X$ such that $h\leql g$ (here we extended the partial order $\leql$ to $A$, so that $h \leql g$ means $h^{-1}g \in A^+$).
  Then by Theorem \ref{thm:properties-fundamental-element}, if $\Delta$ is the fundamental element of $A^+$, $X$ is the union of the subcomplexes $Y_i = \Delta^{-i}X^+$ for $i\in\N$. Since $Y_i\subseteq Y_{i+1}$ for all $i$, we can apply Lemma \ref{lemma:contractible-chain} to conclude that $X$ is contractible.
\end{proof}

\begin{corollary}
  If $A$ is an Artin group of finite type, then the classifying space $BA^+$ is a classifying space for $A$.
  \label{cor:finite-type-classifying-space}
\end{corollary}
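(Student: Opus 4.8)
The plan is to identify $BA^+$ as an aspherical space with the correct fundamental group, using the universal cover constructed in Section~3. First I would observe that the groupification of $A^+$ is exactly $A$: by Remark~\ref{rmk:presentation-groupification} the groupification has the same presentation as $A^+$, which is the defining presentation of $A$. Combined with Theorem~\ref{thm:artin-monoid-injects-in-group}, the natural map $A^+\to A$ is an injective monoid homomorphism from $A^+$ into its groupification, so Proposition~\ref{prop:EM-universal-cover-of-BM} applies: the space $EA^+$ is the universal cover of $BA^+$, and Remark~\ref{rmk:fundamental-group-BM} gives $\pi_1(BA^+)\cong A$.

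Next I would invoke the hypothesis that $A$ is of finite type in order to apply Theorem~\ref{thm:EM-contractible-finite-type}, which says that $EA^+$ is contractible. Since $EA^+$ is the universal cover of $BA^+$, it follows that $\pi_n(BA^+)\cong\pi_n(EA^+)=0$ for all $n\geq 2$. Thus $BA^+$ is a connected CW complex (being the geometric realization of a simplicial set, by \cite{milnor1957geometric}) which is aspherical and has fundamental group isomorphic to $A$; hence it is a $K(A,1)$, i.e.\ a classifying space for $A$.

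There is no real obstacle here beyond assembling the pieces already in place: the only points requiring a word of care are the verification that Proposition~\ref{prop:EM-universal-cover-of-BM} is applicable (which needs precisely Theorem~\ref{thm:artin-monoid-injects-in-group} together with Remark~\ref{rmk:presentation-groupification}) and the standard fact that a CW complex with vanishing higher homotopy groups and fundamental group $A$ is a classifying space for $A$. Everything else is immediate from Theorem~\ref{thm:EM-contractible-finite-type} and Remark~\ref{rmk:fundamental-group-BM}.
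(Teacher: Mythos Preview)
Your proposal is correct and follows exactly the approach of the paper, which simply cites Remark~\ref{rmk:fundamental-group-BM}, Proposition~\ref{prop:EM-universal-cover-of-BM}, and Theorem~\ref{thm:EM-contractible-finite-type}. You have merely spelled out the details a bit more carefully, in particular making explicit (via Theorem~\ref{thm:artin-monoid-injects-in-group} and Remark~\ref{rmk:presentation-groupification}) why the hypothesis of Proposition~\ref{prop:EM-universal-cover-of-BM} is satisfied.
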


\begin{proof}
  This result follows immediately by Remark \ref{rmk:fundamental-group-BM}, Proposition \ref{prop:EM-universal-cover-of-BM} and Theorem \ref{thm:EM-contractible-finite-type}.
\end{proof}

We are going to construct an acyclic matching $\M$ on $BA^+$ which is essentially a combination of the two matchings used in \cite{ozornova2017discrete}, with the difference that ours will be entirely on the topological level.
Set $Z = BA^+$ and
\[ \D = \{ \Delta_T \mid T\in \sf\setminus \{\varnothing\} \}, \]
where $\Delta_T$ is the fundamental element of $A_T^+\subseteq A^+$.
First we are going to describe some definitions and results of \cite{ozornova2017discrete}, which will lead to the construction of two matchings $\M_1$ and $\M_2$.
We start with a regularity criterion for faces in $Z$.

\begin{lemma}[\cite{ozornova2017discrete}]
  Call $\delta_i$ the $i$-th face map of $Z$.
  Let $c_1$ be a non-degenerate $n$-simplex of $Z$ and let $c_2=\delta_i(c_1)$ be a face of $c_1$.
  If $\delta_j(c_1) \neq c_2$ for all $j\neq i$, then $c_2$ is a regular face of $c_1$.
  \label{lemma:regularity-criterion}
\end{lemma}

To construct the first matching $\M_1$, we give the following definitions.
\begin{itemize}
  \item A cell $c = [x_1|\dots|x_n] \in \zstar$ is \emph{$\mu_1$-essential} if the products $x_k\cdots x_n$ lie in $\D$ for $1\leq k\leq n$.
  
  \item The \emph{$\mu_1$-depth} of a cell $c = [x_1|\dots|x_n]$ is given by
  \[ d_1(c) = \min \{ j \mid [x_j|\dots |x_n] \text{ is $\mu_1$-essential} \},  \]
  with the convention that $d_1(c)=n+1$ if no such $j$ exists.
  Notice that $c$ is $\mu_1$-essential if and only if $d_1(c)=1$.
  
  \item For a cell $c = [x_1|\dots|x_n]$ of $\mu_1$-depth $d$, and for $d\leq k\leq n+1$, define $I_k\subseteq S$ to be the unique subset of $S$ with the property that $x_k\cdots x_n = \Delta_{I_k}$.
  Notice that $I_{n+1} = \varnothing$.
  
  \item A cell $c = [x_1|\dots|x_n]$ of $\mu_1$-depth $d>1$ is \emph{$\mu_1$-collapsible} if
  \[ I(x_{d-1}x_d\cdots x_n) = I_d. \]
\end{itemize}

\begin{lemma}[\cite{ozornova2017discrete}]
  Define
  \[ \M_1 = \left\{\, (c_1 \to c_2) \;\left\lvert\;
  \begin{array}{l}
    c_1 = [x_1|\dots|x_n] \in \zstar \text{ is $\mu_1$-collapsible, and} \\
    c_2 = [x_1|\dots|x_{d-1}x_d|\dots|x_n] \text{ where } d = d_1(c_1)
  \end{array}
  \right. \right\}. \]
  Then $\M_1$ is an acyclic matching on $Z$ with essential cells given by the $\mu_1$-essential cells defined above.
\end{lemma}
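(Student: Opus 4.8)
The plan is to verify the two defining properties of a matching and then establish acyclicity, following the general strategy but keeping everything at the level of the face poset of the CW complex $Z = BA^+$.

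First I would check that $\M_1$ is a well-defined matching. Given a $\mu_1$-collapsible cell $c_1 = [x_1|\dots|x_n]$ of $\mu_1$-depth $d$, the partner $c_2 = [x_1|\dots|x_{d-1}x_d|\dots|x_n]$ is obtained by applying the face map that multiplies the $(d-1)$-st and $d$-th entries; I must verify that $c_2$ is genuinely a cell (i.e.\ that $x_{d-1}x_d \neq 1$, which holds since $A^+$ injects into $A$ and all entries are non-trivial positive elements) and that $\dim c_2 = \dim c_1 - 1$, so $c_2$ is a face of $c_1$. Regularity of this face follows from the fact that $BA^+$, being the realization of a simplicial set, has injective-enough attaching maps on the relevant faces — more precisely, the simplex $c_1$ together with the particular face $c_2$ form a regular pair because the $(d-1,d)$-multiplication face map is injective on the corresponding sub-simplex. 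Then I would show that no cell of $Z$ occurs in two edges of $\M_1$. For this, the key point is that $d_1$ and the collapsibility condition are determined by the \emph{tail} $[x_j|\dots|x_n]$ of a cell, so I can read off from $c_2$ both whether it is itself collapsible and, if it is a target, what its unique source must be: if $c_2$ is a target of $(c_1 \to c_2) \in \M_1$ then $c_2 = [y_1|\dots|y_{n-1}]$ with $y_{d-1} = x_{d-1}x_d$, and one recovers $c_1$ uniquely because $I(y_{d-1}\cdots y_{n-1}) = I_d$ forces the factorization $y_{d-1} = x_{d-1}x_d$ with $x_d\cdots x_n = \Delta_{I_d}$ (using that $\Delta_{I_d}$ is the unique right divisor of $y_{d-1}\cdots y_{n-1}$ of its form, by the uniqueness in Theorem~\ref{thm:normal-form-artin-monoid}); one also checks that a collapsible cell's depth strictly exceeds that of its target, so source and target roles cannot overlap.

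For acyclicity I would use Lemma~\ref{lemma:acyclic-matching}: introduce the $P$-grading $\eta$ that sends $c = [x_1|\dots|x_n]$ to the product $x_1\cdots x_n \in A^+$, ordered by $\leqr$ (or simply graded by length), and observe that both the face map defining $\M_1$ and its inverse preserve this product — multiplying adjacent entries does not change $x_1\cdots x_n$ — so $\M_1$ is compatible with $\eta$. It then suffices to show each restriction $\M_{1,\alpha}$ to the fiber $\eta^{-1}(\alpha)$ is acyclic. Within a fiber, a putative directed cycle in $G_Z^{\M_1}$ alternates matched (dimension-raising, i.e.\ splitting) and unmatched (dimension-lowering, i.e.\ merging) steps; I would track a secondary statistic, the $\mu_1$-depth $d_1$, and argue that along the cycle it can only behave monotonically in a way that forbids a return — the matched step lowers the relevant index while the possible unmatched steps back out of the subcomplex cannot increase it past where it started, because $\mu_1$-essentiality of tails is preserved under the merging face maps. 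Pinning down precisely which unmatched edges can appear in a cycle within a single fiber, and showing $d_1$ (or a lexicographic refinement of it) is a Morse-type potential that strictly decreases, is the main obstacle; this is exactly the computation carried out in \cite{ozornova2013discrete}, and I would either reproduce that argument or cite it. Finally, the identification of the essential cells with the $\mu_1$-essential cells is immediate from the definitions: a cell fails to be in any edge of $\M_1$ precisely when it is neither collapsible nor a target, and one checks that the non-collapsible, non-target cells are exactly those with $d_1(c) = 1$.
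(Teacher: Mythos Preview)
The paper does not give its own proof of this lemma: it is stated with the attribution \cite{ozornova2013discrete} and used as a black box. So there is no argument in the paper to compare your sketch against; the paper's ``proof'' is simply the citation.

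Your outline is a reasonable reconstruction of how such a proof goes, and you correctly identify the three ingredients (well-definedness of the pairing, the matching property, acyclicity) and the right grading (by the product $x_1\cdots x_n$, which is preserved by the inner face maps). Two small remarks. First, your recovery of $c_1$ from $c_2$ is slightly misstated: with $d=d_1(c_1)$ one finds $d_1(c_2)=d$ as well (in the reindexed cell), and the unique splitting $y_{d-1}=\beta\gamma$ with $\gamma y_d\cdots y_{n-1}=\Delta_T$ for $T=I(y_{d-1}\cdots y_{n-1})$ is what reconstructs $c_1$; the appeal to Theorem~\ref{thm:normal-form-artin-monoid} is not really the mechanism here, rather right-cancellativity together with Lemma~\ref{lemma:fundamental-element-as-common-multiple}. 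Second, for the identification of essential cells you should also check the converse direction explicitly: if $d_1(c)>1$ and $c$ is not collapsible then $I(x_{d-1}\cdots x_n)\supsetneq I_d$, and the factorization $x_{d-1}=\beta\gamma$ with $\gamma x_d\cdots x_n=\Delta_{I(x_{d-1}\cdots x_n)}$ (and $\beta\neq 1$ since $x_{d-1}\cdots x_n\notin\D$) exhibits $c$ as a target. With these tweaks your sketch is correct, and since you yourself defer the acyclicity-within-fibers computation to \cite{ozornova2013discrete}, your treatment is in practice the same as the paper's: cite Ozornova.
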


To construct the second matching $\M_2$, assume from now on that the set $S$ carries a total order $\leq$.
Notice that a $\mu_1$-essential cell $c=[x_1|\dots|x_n]$ is completely characterized by the sequence of subsets $I_1 \supset I_2\supset \dots \supset I_n \supset I_{n+1} = \varnothing$ defined above.

\begin{itemize}
  \item A $\mu_1$-essential cell $c=[x_1|\dots|x_n]$ is \emph{$\mu_2$-essential} if, for any $1\leq k\leq n$, $I_k\setminus I_{k+1} = \{s_k\}$ and $s_k = \max I_k$.
  
  \item The \emph{$\mu_2$-depth} of a $\mu_1$-essential cell $c=[x_1|\dots|x_n]$ is given by
  \[ d_2(c) = \min \{ j \mid [x_j|\dots |x_n] \text{ is $\mu_2$-essential} \},  \]
  
  \item A $\mu_1$-essential cell $c = [x_1|\dots|x_n]$ of $\mu_2$-depth $d>1$ is \emph{$\mu_2$-collapsible} if
  \[ \max I_{d-1} = \max I_d. \]
\end{itemize}

\begin{lemma}[\cite{ozornova2017discrete}]
  Define
  \[ \M_2 = \left\{\, (c_1 \to c_2) \;\left\lvert\;
  \begin{array}{l}
    c_1 = [x_1|\dots|x_n] \in \zstar \text{ is $\mu_2$-collapsible, and} \\
    c_2 = [x_1|\dots|x_{d-1}x_d|\dots|x_n] \text{ where } d = d_2(c_1)
  \end{array}
  \right. \right\}. \]
  Then $\M_2$ is an acyclic matching on $Z$ with essential cells given by the non-$\mu_1$-essential cells and the $\mu_2$-essential cells.
\end{lemma}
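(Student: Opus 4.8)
The plan is to verify, in order, that $\M_2$ is a well-defined matching, that it is acyclic, and that its essential cells are the ones claimed. All three points rest on the same bookkeeping: a $\mu_1$-essential cell $c = [x_1|\dots|x_n]$ is the same datum as the strictly decreasing chain $I_1 \supsetneq I_2 \supsetneq \dots \supsetneq I_n$ of nonempty members of $\sf$ (with the convention $I_{n+1} = \varnothing$), via $x_k = \Delta_{I_k}\Delta_{I_{k+1}}^{-1}$; the cell $c_2 = [x_1|\dots|x_{d-1}x_d|\dots|x_n]$ occurring in the definition of $\M_2$ (with $d = d_2(c_1)$) is again $\mu_1$-essential, with chain obtained by deleting $I_d$; and conversely, splitting the $p$-th entry of a $\mu_1$-essential cell yields a $\mu_1$-essential cell for each choice of a set lying strictly between the $p$-th and $(p{+}1)$-st members of its chain. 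Two direct computations drive everything: (1) if $c_1$ is $\mu_2$-collapsible with $d = d_2(c_1)$, then $d_2(c_2) = d_2(c_1)$ and the $\mu_2$-essential tail of $c_2$ starts at the same position, with top set $I_{d+1} = I_d \setminus \{\max I_d\}$; (2) writing $J_\bullet$ for the chain of $c_2$, the set $I_d$ deleted from $c_1$'s chain equals $J_d \cup \{\max J_{d-1}\}$.

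\emph{Matching and regular faces.} By (1)--(2), if $c_2$ is the target of an edge of $\M_2$ then its source $c_1$ is uniquely recovered: split the $(d{-}1)$-st entry of $c_2$, reinserting the set $J_d \cup \{\max J_{d-1}\}$ at position $d = d_2(c_2)$. Hence no cell is the target of two edges, and clearly none is the source of two; and no cell is simultaneously a source and a target, because a source satisfies $\max I_{d-1} = \max I_d$ while for a target the maximum strictly decreases from position $d{-}1$ to position $d$. Since $c_1, c_2$ are both $\mu_1$-essential and such cells are determined by their chains, a short inspection of the face maps of $BA^+$ (using $x_i \neq 1$ in $A^+$) shows $c_2$ occurs exactly once among the faces $d_0 c_1, \dots, d_n c_1$; thus the attaching map $\Phi$ of $c_1$ carries $\overline{\Phi^{-1}(c_2)}$ homeomorphically onto the corresponding facet and restricts to a homeomorphism onto $c_2$, so $c_2$ is a regular face of $c_1$ and $\M_2$ is a matching.

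\emph{Acyclicity.} By Lemma \ref{lemma:acyclic-matching} it suffices to produce a $P$-grading on $Z$ compatible with $\M_2$ whose fibers carry acyclic restrictions of $\M_2$. Take $P = A^+$ ordered by $\alpha \preceq \beta \iff \beta = \gamma\alpha\delta$ for some $\gamma,\delta \in A^+$ (antisymmetry uses the length function), and $\eta([x_1|\dots|x_n]) = x_1\cdots x_n$: the face maps of $BA^+$ send a cell's product to a factor of it, so $\eta$ is a poset map; it is compatible with $\M_2$ since merging leaves products unchanged; and it is compact (a fixed $\beta$ has finitely many factors, each with finitely many factorizations). If $\beta \notin \D$ the fiber $\eta^{-1}(\beta)$ contains no $\mu_1$-essential cell, so the restriction of $\M_2$ there is empty; if $\beta = \Delta_T$, then $\M_2$ restricts to a matching on the set of chains $T = I_1 \supsetneq \dots \supsetneq I_n$, and one must show this flag-matching is acyclic. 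A cycle in $G_Z^{\M_2}$ is then an alternating sequence of chains of lengths $n$ and $n{+}1$, all with top $T$; to exclude it one first identifies exactly which codimension-one faces of a $\mu_1$-essential chain are again $\mu_1$-essential (equivalently, for which $j$ one has $\Delta_{I_{j-1}}\Delta_{I_{j+1}}^{-1} \in \D$), and then tracks a monovariant, e.g.\ the top set $J$ of the $\mu_2$-essential tail ($\varnothing$ if none): the reversed matching edges strictly enlarge $J$ by adjoining their new maximum, while every other boundary edge occurring inside the fiber either fixes $J$ or replaces it by a proper subset. I expect this last point — sharpening the monovariant so that no alternating cycle can survive — to be the technical heart of the argument; it is exactly here that one may instead invoke \cite{ozornova2013discrete} directly.

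\emph{Essential cells.} Non-$\mu_1$-essential cells never appear in $\M_2$, hence are $\M_2$-essential. A $\mu_1$-essential cell $c$ with $d_2(c) = 1$ is $\mu_2$-essential, and it is neither a source (which requires $d_2 \geq 2$) nor a target (by the maximum comparison above), so it is $\M_2$-essential. If $d_2(c) \geq 2$, then either $\max I_{d_2-1} = \max I_{d_2}$, in which case $c$ is $\mu_2$-collapsible and a source; or $\max I_{d_2-1} > \max I_{d_2}$ (the only remaining possibility when $d_2(c) \leq n$), or $d_2(c) = n+1$, in which cases $c$ is the target of the $\mu_2$-collapsible cell obtained by inserting $I_{d_2} \cup \{\max I_{d_2-1}\}$ at position $d_2$ (respectively, appending $\{\max I_n\}$ at the end), as one checks using (1)--(2). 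Therefore the $\M_2$-essential cells are precisely the non-$\mu_1$-essential cells together with the $\mu_2$-essential cells.
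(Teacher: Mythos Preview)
The paper does not prove this lemma at all: it is stated with the citation \cite{ozornova2013discrete} and used as a black box. So there is no ``paper's own proof'' to compare against; you are supplying what the paper deliberately imports.

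Your treatment of the matching and of the essential cells is correct. The dictionary between $\mu_1$-essential cells and strictly decreasing chains in $\sf$ is exactly the right bookkeeping, and your computations (1)--(2) do recover the source from the target, show that sources and targets are disjoint (equal versus strictly decreasing maxima at position $d_2$), and identify the essential cells. The regular-face check is also fine: for $1\le i\le n-1$ the face $d_i c_1$ deletes $I_{i+1}$ from the chain, and distinct $I_j$'s give distinct chains, while $d_0 c_1$ and $d_n c_1$ have a different total product, so $c_2$ occurs exactly once among the codimension-one faces.

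The genuine gap is the one you already flag: acyclicity. Your claim that a non-matching forward edge inside a fiber either fixes $J$ or replaces it by a proper subset is in fact correct (for $j+1>d$ one lands on $I_{j+2}\subsetneq I_d$; for $j+1\le d-2$ the resulting cell is again $\mu_2$-collapsible and hence cannot occur in a cycle; and for $j+1=d-1$ the gap $I_{d-2}\setminus I_d$ has size at least $2$, so $J$ stays equal to $I_d$). But ``fixes or shrinks'' on forward edges together with ``strictly grows'' on reverse edges does not by itself exclude a zigzag cycle, and you stop short of the refinement needed to finish (for instance, tracking $\max J$ along the steps, or passing to a lexicographic pair). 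Since you explicitly defer to \cite{ozornova2013discrete} at precisely this point, your write-up is in the end no more and no less than what the paper does: it quotes the result. If you want a self-contained proof, this is the one paragraph that still needs to be written.
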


\begin{proof}
  In \cite{ozornova2017discrete} this matching is constructed at the level of the algebraic complex.
  Therefore, to adapt the proof, we only need to check that $c_2$ is a regular face of $c_1$ whenever $(c_1 \to c_2)$ is in the matching.
  This follows directly from Lemma \ref{lemma:regularity-criterion}, since in $c_1$ we have that $x_i \neq 1$ for all $i$.
  Notice that the face regularity is the topological analogue of the $\Z$-compatibility proved in \cite{ozornova2017discrete}.
\end{proof}

Consider now the matching $\M=\M_1\cup \M_2$ on $Z$.
With a slight abuse of notation, define the length of a cell as
\[ \ell([x_1|\dots|x_n]) = \ell(x_1\cdots x_n) \]
for any cell $[x_1|\dots|x_n]\in \zstar$.
Define also a function $\eta\colon \zstar \to \N\times\{0,1\}$ as follows:
\[ \eta(c) = \begin{cases}
                             (\ell(c), 0) & \text{if $c$ is $\mu_1$-essential}; \\
                             (\ell(c), 1) & \text{if $c$ is not $\mu_1$-essential}.
                           \end{cases} \]

\begin{lemma}
  The function $\eta\colon \zstar \to \N\times\{0,1\}$ is a compact grading on $Z$, if $\N\times \{0,1\}$ is equipped with the lexicographic order.
\end{lemma}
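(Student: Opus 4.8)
The plan is to verify the two requirements hidden in the phrase ``compact grading'': that $\eta$ is a poset map $\zstar\to\N\times\{0,1\}$, and that the subcomplex $Z_{\leq p}$ is compact for every $p\in\N\times\{0,1\}$.

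\emph{Step 1: $\eta$ is order\nobreakdash-preserving.} It suffices to check $\eta(\sigma)\leq\eta(\tau)$ whenever $\sigma$ is a codimension\nobreakdash-one face of $\tau$, because these relations generate the order on $\zstar$: for a cell $\tau=[x_1|\dots|x_n]$ of $Z=BA^+$, the boundary of its characteristic $n$\nobreakdash-simplex covers the closures of the $n+1$ faces coming from the face maps, and all of these faces are non\nobreakdash-degenerate since $x_ix_{i+1}\neq 1$ whenever $x_i,x_{i+1}\neq 1$ (lengths are additive in $A^+$). A codimension\nobreakdash-one face of $\tau$ has one of the forms $[x_2|\dots|x_n]$, $[x_1|\dots|x_{n-1}]$, or $[x_1|\dots|x_ix_{i+1}|\dots|x_n]$. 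In the first two cases the product of the entries loses a factor of positive length, so $\ell(\sigma)<\ell(\tau)$ and hence $\eta(\sigma)<\eta(\tau)$ in the lexicographic order, irrespective of the second coordinates. In the merging case $\ell(\sigma)=\ell(\tau)$, so one must compare second coordinates; the point is that a tail product of $\sigma$ starting at its $l$\nobreakdash-th entry equals $x_l\cdots x_n$ if $l\leq i$ and $x_{l+1}\cdots x_n$ if $l>i$, so the tail products of $\sigma$ form a subset $\{x_k\cdots x_n\mid 1\leq k\leq n,\ k\neq i+1\}$ of the tail products of $\tau$. Consequently, if $\tau$ is $\mu_1$\nobreakdash-essential (all its tail products lie in $\D$) then so is $\sigma$, i.e.\ the second coordinate of $\eta(\sigma)$ is at most that of $\eta(\tau)$, and again $\eta(\sigma)\leq\eta(\tau)$.

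\emph{Step 2: compactness.} Since $\eta$ is a poset map, $Z_{\leq p}$ is a genuine subcomplex, hence compact precisely when it has finitely many cells. Fix $p=(\ell_0,\varepsilon)$. Any cell $c=[x_1|\dots|x_n]$ with $\eta(c)\leq p$ satisfies $\ell(x_1\cdots x_n)\leq\ell_0$; since each $x_i$ has length at least $1$ and lengths add, this forces $n\leq\ell_0$ and each $x_i$ to lie in the set of elements of $A^+$ of length at most $\ell_0$. That set is finite because $A^+$ is generated by the finite set $\Sigma$ (there are at most $|\Sigma|^{k}$ elements of length $k$). So only finitely many tuples $(x_1,\dots,x_n)$ occur, $Z_{\leq p}$ has finitely many cells, and it is compact.

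I do not expect a genuine obstacle here. The only step that needs care is the merging subcase of Step 1: one has to observe that collapsing two consecutive entries into their product deletes exactly one tail product and creates no new one, which is precisely what makes the $\mu_1$\nobreakdash-essential flag (the second coordinate of $\eta$) behave monotonically; everything else is routine bookkeeping with the length function and with the finiteness of $\Sigma$.
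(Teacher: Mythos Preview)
Your argument is correct and follows essentially the same route as the paper's proof: both reduce the poset-map condition to codimension-one faces, dispose of the ``drop an endpoint'' faces via the strict length decrease, and handle the ``merge two entries'' face by observing that $\mu_1$-essentiality of $[x_1|\dots|x_n]$ implies $\mu_1$-essentiality of $[x_1|\dots|x_ix_{i+1}|\dots|x_n]$; compactness is then the same finiteness count on cells of bounded length. Your version is a bit more explicit (noting that all codimension-one faces are non-degenerate and spelling out the tail-product inclusion), but the logic is identical.
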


\begin{proof}
  First we have to prove that $\eta$ is a poset map.
  For this it is enough to prove that, for any cell $c_1=[x_1|\dots|x_n] \in \zstar$ and for any cell $c_2$ which is a face of $c_1$, $\eta(c_1)\geq \eta(c_2)$.
  Suppose by contradiction that $\eta(c_1) < \eta(c_2)$ for some cells $c_1$ and $c_2$ as above.
  Since $\ell(c_1)\geq \ell(c_2)$ the only possibility is that $\eta(c_1)=(k,0)$ and $\eta(c_2)=(k,1)$ where $k=\ell(c_1)=\ell(c_2)$.
  This means in particular that $c_1$ is $\mu_1$-essential whereas $c_2$ is not.
  Since $\ell(c_1)=\ell(c_2)$, the cell $c_2$ must be of the form
  \[ c_2 = [x_1|\dots|x_ix_{i+1}|\dots|x_n] \]
  for some $i\in\{1,\dots,n-1\}$.
  Clearly if $c_1$ is $\mu_1$-essential then also $c_2$ is, so we obtain a contradiction.
  
  It only remains to prove that $Z_{(n,q)}$ is compact for all $(n,q)\in\N\times\{0,1\}$.
  This is immediate since $Z_{(n,q)}$ contains only cells of length $\leq n$ and there is only a finite number of them.
\end{proof}

\begin{proposition}
  The matching $\M$ on $Z$ is acyclic and compatible with the compact grading $\eta$.
\end{proposition}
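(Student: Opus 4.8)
The plan is to read off both assertions from Ozornova's two lemmas quoted above, combined with Lemma~\ref{lemma:acyclic-matching}. The point that makes everything go through is that being $\mu_1$-essential is precisely what separates the cells touched by $\M_1$ from those touched by $\M_2$: by the first Ozornova lemma the cells lying on an edge of $\M_1$ are exactly the non-$\mu_1$-essential cells, while by the second lemma the cells lying on an edge of $\M_2$ are among the $\mu_1$-essential cells (they are the $\mu_1$-essential cells that are not $\mu_2$-essential). This dichotomy is exactly what is recorded by the second coordinate of $\eta$.

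First I would check that $\M=\M_1\cup\M_2$ is a matching compatible with $\eta$. The condition that $c_2$ is a regular face of $c_1$ for each matched pair $(c_1\to c_2)$ is inherited from $\M_1$ and $\M_2$; and no cell of $Z$ lies on two edges of $\M$, since by the observation above a cell lying on an edge of $\M_1$ is not $\mu_1$-essential whereas a cell lying on an edge of $\M_2$ is $\mu_1$-essential, while $\M_1$ and $\M_2$ are themselves matchings. For compatibility, every edge of $\M$ has the form $(c_1\to c_2)$ with $c_1=[x_1|\dots|x_n]$ and $c_2=[x_1|\dots|x_{d-1}x_d|\dots|x_n]$; merging two consecutive entries does not change the product, so $\ell(c_1)=\ell(c_2)$, and the second coordinate of $\eta$ agrees on $c_1$ and $c_2$ because both are non-$\mu_1$-essential when the edge is in $\M_1$ and both are $\mu_1$-essential when the edge is in $\M_2$. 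In particular $\M_1$ and $\M_2$ are each compatible with $\eta$.

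For acyclicity I would invoke Lemma~\ref{lemma:acyclic-matching}, which reduces the task to showing that the restriction $\M_p$ of $\M$ to each fiber $\eta^{-1}(p)$ is acyclic. The fiber $\eta^{-1}(n,0)$ consists only of $\mu_1$-essential cells, hence meets no edge of $\M_1$, so $\M_{(n,0)}$ equals the restriction of $\M_2$ to that fiber; symmetrically, $\M_{(n,1)}$ equals the restriction of $\M_1$ to $\eta^{-1}(n,1)$. Because each $\M_i$ is compatible with $\eta$, every edge of $\M_i$ incident to a cell of a fiber lies entirely in that fiber, so the graph obtained from the fiber by reversing the edges of $\M_i$ it contains is simply the subgraph of $G_Z^{\M_i}$ induced on that fiber; a directed cycle in it would be a directed cycle in $G_Z^{\M_i}$, contradicting the acyclicity of $\M_i$. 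Hence every $\M_p$ is acyclic, and Lemma~\ref{lemma:acyclic-matching} gives that $\M$ is acyclic.

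I do not expect a genuine obstacle here: the proof is essentially bookkeeping organized around the $\mu_1$-essential versus non-$\mu_1$-essential split. The step that deserves the most care is the last one, namely that restricting each $\M_i$ to a fiber of $\eta$ preserves acyclicity; this is exactly why one wants $\M_1$ and $\M_2$ to be individually compatible with $\eta$, not merely their union $\M$.
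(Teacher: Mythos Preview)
Your proposal is correct and follows essentially the same route as the paper: show that every edge of $\M$ joins two cells with the same value of $\eta$ (using that merging adjacent entries preserves the product and that the $\mu_1$-essential/non-$\mu_1$-essential dichotomy separates $\M_2$-edges from $\M_1$-edges), observe that each fiber $\eta^{-1}(n,q)$ therefore contains edges from only one of $\M_1,\M_2$, and conclude via Lemma~\ref{lemma:acyclic-matching}. You add the check that $\M$ is actually a matching and spell out why restriction to a fiber preserves acyclicity; the paper leaves these implicit, but the argument is the same.
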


\begin{proof}
  First let us prove that $\M$ and $\eta$ are compatible.
  If $(c_1\to c_2) \in \M_1$ then, by definition of $\M_1$, we have that $\ell(c_1)=\ell(c_2)$ and that both $c_1$ and $c_2$ are not $\mu_1$-essential.
  On the other hand, if $(c_1\to c_2)\in \M_2$, then $\ell(c_1)=\ell(c_2)$ and both $c_1$ and $c_2$ are $\mu_1$-essential.
  In any case we have $\eta(c_1)=\eta(c_2)$, which means that $\M$ and $\eta$ are compatible.
  
  Consider a fiber $\eta^{-1}(n,q)$, for some $(n,q)\in \N\times\{0,1\}$. It cannot simultaneously contain edges in $\M_1$ and edges in $\M_2$, because the value of $q$ determines whether the cells in $\eta^{-1}(n,q)$ must be $\mu_1$-essential or not.
  Since $\M_1$ and $\M_2$ are acyclic, the restriction of $\M$ to $\eta^{-1}(n,q)$ is also acyclic.
  This is true for all fibers $\eta^{-1}(n,q)$, therefore by Lemma \ref{lemma:acyclic-matching} we can conclude that $\M$ is also acyclic.
\end{proof}

The previous proposition allows to apply Theorem \ref{thm:discrete-morse-theory} to $Z$, obtaining a smaller CW complex which we will call $Y$.
The essential cells of the matching $\M$ are precisely the $\mu_2$-essential cells.
Notice that a $\mu_2$-essential cell $c = [x_1|\dots|x_n]\in\zstar$ is uniquely identified by the set $I(x_1\cdots x_n)\in \sf$.
This means that the cells of $Y$ are in one-to-one correspondence with $S^f$.

Call $e_T$ the cell of $Y$ corresponding to the set $T\in S^f$. Then $\dim e_T = |T|$.
By construction, every oriented path in the graph $G_Z^\M$ starting from a cell $[x_1|\dots|x_n]$ with all $x_i\in A_T^+$ ends in a cell $[x_1'|\dots|x_m']$ also satisfying $x_j'\in A_T^+$ for all $j$.
Therefore the attaching map of $e_T$ has image contained in the union of the cells $e_R$ with $R\subsetneq T$.
Thus any subset $\F\subseteq S^f$ which is closed under inclusion (i.e.\ $T\in \F$ and $R\subseteq T$ imply $R\in\F$) corresponds to a subcomplex $Y_\F$ of $Y$.
In particular this holds when $\F = T^f$ for any $T\subseteq S$.

In a similar way we have subcomplexes $\bsal_\F(M)$ of $\bsal(M)$ for all subsets $\F$ of $S^f$ closed under inclusion.

\begin{remark}
  The reduced complex $Y$ is natural with respect to inclusion of the set $S$.
  Indeed, consider Coxeter matrices $M$, $M'$ on the generating sets $S$, $S'$ such that $S\subseteq S'$ (with a fixed total order on $S'$, which induces a total order on $S$) and $M'|_{S\times S} = M$. Then we obtain reduced complexes $Y$ and $Y'$ such that $Y$ can be naturally identified with the subcomplex $\smash{Y'_{S^f}}$ of $Y'$.
  This is true because for any non-degenerate simplex $e$ of the subcomplex $BA_{M}^+\subseteq BA_{M'}^+$ all the faces of $e$, as well as the cell possibly matched with $e$, also belong to the subcomplex $BA_{M}^+$.
  Here we indicated with $A_M$ and $A_{M'}$ the Artin groups corresponding to $M$ and $M'$, respectively.
  \label{rmk:naturality-reduced-complex}
\end{remark}

Let us recall a few more results of homotopy theory which will be used later.
%
%
\begin{lemma}[\cite{hatcher}, Proposition 0.18]
  If $(X_1,A)$ is a CW pair and we have attaching maps $f,g\colon A\to X_0$ that are homotopic, then $X_0 \sqcup_f X_1 \simeq X_0 \sqcup_g X_1$ rel $X_0$.
\end{lemma}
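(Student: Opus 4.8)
The plan is to realize both spaces $X_0\sqcup_f X_1$ and $X_0\sqcup_g X_1$ as deformation retracts, \emph{rel $X_0$}, of one larger space built out of a homotopy between the two attaching maps.

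First I would fix a homotopy $F\colon A\times I\to X_0$ with $F_0=f$ and $F_1=g$, and form the adjunction space $Z = X_0\sqcup_F (X_1\times I)$, obtained by gluing $X_1\times I$ to $X_0$ along the map $F$ defined on the subspace $A\times I\subseteq X_1\times I$. Inside $Z$, the subspace $X_0\cup (X_1\times\{0\})$ is a homeomorphic copy of $X_0\sqcup_f X_1$, and $X_0\cup(X_1\times\{1\})$ is a homeomorphic copy of $X_0\sqcup_g X_1$; moreover each contains $X_0$.

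Next I would invoke the standard fact that, since $(X_1,A)$ is a CW pair, the subspace $X_1\times\{0\}\cup A\times I$ is a deformation retract of $X_1\times I$, and that the usual construction of this deformation retraction (built cell by cell from the deformation retraction of $D^n\times I$ onto $D^n\times\{0\}\cup\partial D^n\times I$, as in the discussion of the homotopy extension property) is stationary on $A\times I$ and on $X_1\times\{0\}$. Because this deformation retraction is the identity on $A\times I$ and the identity on $X_0$, it descends to the quotient $Z$, giving a deformation retraction of $Z$ onto $X_0\cup(X_1\times\{0\})=X_0\sqcup_f X_1$ which fixes $X_0$ throughout. Symmetrically, the deformation retraction of $X_1\times I$ onto $X_1\times\{1\}\cup A\times I$ yields a deformation retraction of $Z$ onto $X_0\sqcup_g X_1$, rel $X_0$. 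Finally, a deformation retraction rel $X_0$ of $Z$ onto a subspace $W\supseteq X_0$ exhibits the inclusion $W\hookrightarrow Z$ as a homotopy equivalence rel $X_0$, with homotopy inverse the retraction $Z\to W$; composing the homotopy equivalence $X_0\sqcup_f X_1\hookrightarrow Z$ with a homotopy inverse rel $X_0$ of $X_0\sqcup_g X_1\hookrightarrow Z$ produces a homotopy equivalence $X_0\sqcup_f X_1\simeq X_0\sqcup_g X_1$ rel $X_0$, as desired.

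The step requiring the most care is the deformation-retraction statement for the pair $(X_1\times I,\ X_1\times\{0\}\cup A\times I)$ together with the observation that the retraction can be taken to be stationary on $A\times I$: this is precisely what allows it to pass to the pushout $Z$, and it is also the only place where the CW-pair hypothesis on $(X_1,A)$ is genuinely used (through the homotopy extension property). The rest is formal manipulation of deformation retractions and compositions of homotopy equivalences rel $X_0$.
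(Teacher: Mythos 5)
Your proposal is correct and is essentially the same argument the paper relies on: the lemma is not reproved in the paper but cited as Proposition 0.18 of \cite{hatcher}, whose proof is precisely your construction of $Z = X_0 \sqcup_F (X_1\times I)$ containing both $X_0\sqcup_f X_1$ and $X_0\sqcup_g X_1$ as deformation retracts rel $X_0$, using the deformation retraction of $X_1\times I$ onto $X_1\times\{0\}\cup A\times I$ furnished by the CW-pair hypothesis. Nothing further is needed.
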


\begin{corollary}
  If $X$ is a CW complex and $f,g\colon S^{n-1}\to X$ are two attaching maps of an $n$-cell $e^n$ that are homotopic, then $X\sqcup_f e^n \simeq X \sqcup_g e^n$ rel $X$.
  \label{cor:homotopic-attaching-maps}
\end{corollary}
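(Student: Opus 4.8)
The statement to prove is Corollary~\ref{cor:homotopic-attaching-maps}: if $f,g\colon S^{n-1}\to X$ are homotopic attaching maps of an $n$-cell $e^n$, then $X\sqcup_f e^n \simeq X\sqcup_g e^n$ rel $X$. The plan is to deduce this directly from the preceding Lemma (Hatcher, Proposition~0.18) by taking $(X_1,A) = (D^n, S^{n-1})$ and $X_0 = X$. First I would observe that attaching an $n$-cell $e^n$ to $X$ along a map $\varphi\colon S^{n-1}\to X$ is by definition the pushout $X\sqcup_\varphi D^n$, i.e.\ $X$ with a copy of $D^n$ glued on via $\varphi$ on its boundary sphere $S^{n-1}=\partial D^n$. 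So $X\sqcup_f e^n$ and $X\sqcup_g e^n$ are exactly $X_0\sqcup_f X_1$ and $X_0\sqcup_g X_1$ in the notation of the Lemma, once we note that $(D^n,S^{n-1})$ is a CW pair.

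The second step is simply to check the hypotheses of the Lemma: $(D^n, S^{n-1})$ is a CW pair (give $D^n$ its standard CW structure with one $0$-cell, one $(n-1)$-cell, and one $n$-cell when $n\geq 1$, with $S^{n-1}$ the evident subcomplex; for $n=0,1$ handle the degenerate cases directly, or just note $S^{-1}=\varnothing$ and $S^0$ is two points, both trivially subcomplexes), and $f,g\colon S^{n-1}\to X$ are homotopic by assumption. Applying the Lemma then yields $X\sqcup_f D^n \simeq X\sqcup_g D^n$ rel $X$, which is precisely the assertion $X\sqcup_f e^n \simeq X\sqcup_g e^n$ rel $X$.

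This corollary is essentially a restatement of the Lemma in the special case that the CW pair being attached is a single cell, so there is no real obstacle; the only thing to be careful about is the identification of notation — making clear that "attaching an $n$-cell along $f$" means forming the pushout $X\sqcup_f D^n$ with $A=S^{n-1}=\partial D^n$ — and perhaps a remark on the trivial low-dimensional cases. I would write the proof in two or three sentences along these lines.

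\begin{proof}
  Attaching an $n$-cell $e^n$ to $X$ along a map $\varphi\colon S^{n-1}\to X$ means forming the adjunction space $X\sqcup_\varphi D^n$, where $S^{n-1}$ is identified with the boundary $\partial D^n$. Since $(D^n,S^{n-1})$ is a CW pair, we may apply the previous lemma with $(X_1,A)=(D^n,S^{n-1})$ and $X_0 = X$: because $f$ and $g$ are homotopic, we obtain $X\sqcup_f D^n \simeq X\sqcup_g D^n$ rel $X$, i.e.\ $X\sqcup_f e^n \simeq X\sqcup_g e^n$ rel $X$.
\end{proof}
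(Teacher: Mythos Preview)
Your proof is correct and follows exactly the same approach as the paper: apply the preceding lemma with $(X_1,A)=(D^n,S^{n-1})$ and $X_0=X$. The paper's proof is the one-line version of what you wrote.
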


\begin{proof}
  It follows from the previous lemma with $(X_1,A) = (D^n, S^{n-1})$ and $X_0=X$.
\end{proof}
%
%

We are finally ready to prove that the CW complexes $Y$ and $\bsal(M)$ are homotopy equivalent.
As a preliminary step, we give an explicit description of the attaching maps of the $2$-cells in $Y$.

\begin{lemma}
  Up to orientation, the boundary curve of a $2$-cell $e_{\{s,t\}}$ of $Y$ is given by
  \[
    \begin{cases}
      \pr{e_{\{s\}}}{e_{\{t\}}}{m_{s,t}} \;\pr{e_{\{s\}}^{-1}}{e_{\{t\}}^{-1}}{m_{s,t}} & \text{if $m_{s,t}$ is even}; \\
      \pr{e_{\{s\}}}{e_{\{t\}}}{m_{s,t}} \;\pr{e_{\{t\}}^{-1}}{e_{\{s\}}^{-1}}{m_{s,t}} & \text{if $m_{s,t}$ is odd}.
    \end{cases}
  \]
  \label{lemma:boundary-2-cell}
\end{lemma}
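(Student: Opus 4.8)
The plan is to reduce at once to the dihedral case and then follow the Morse collapse explicitly on the $1$- and $2$-cells. Since the attaching map of $e_{\{s,t\}}$ has image in the union of the cells $e_R$ with $R\subseteq\{s,t\}$, the naturality of the reduced complex (Remark~\ref{rmk:naturality-reduced-complex}) lets me replace $M$ by its restriction to $\{s,t\}\times\{s,t\}$; so from now on $S=\{s,t\}$, the group $A$ is of dihedral type with $m=m_{s,t}<\infty$, and, writing $\Delta=\Delta_{\{s,t\}}$, we have $\D=\{\sigma_s,\sigma_t,\Delta\}$. Fix the order $s<t$. Unwinding the definition of $\mu_2$-essential cell, the essential cells of $\M=\M_1\cup\M_2$ are exactly $[\,]$, $[\sigma_s]$, $[\sigma_t]$ and $[\Delta\sigma_s^{-1}|\sigma_s]$, representing $e_\varnothing$, $e_{\{s\}}$, $e_{\{t\}}$ and $e_{\{s,t\}}$ respectively.

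The second step is to describe the matching on the cells that occur in the attaching map of $e_{\{s,t\}}$. Checking the definitions of $\M_1$ and $\M_2$, one finds that for $\beta\in A^+\setminus\{1\}$ with $\beta\notin\D$ the $1$-cell $[\beta]$ is matched in $\M_1$ with the $2$-cell $[\beta\Delta_{I(\beta)}^{-1}|\Delta_{I(\beta)}]$ (here $\Delta_{I(\beta)}\leqr\beta$ by Lemma~\ref{lemma:fundamental-element-as-common-multiple}, and $\beta\Delta_{I(\beta)}^{-1}\neq1$ since $\beta\notin\D$), and that the $1$-cell $[\Delta]$ is matched in $\M_2$ with the $2$-cell $[\Delta\sigma_t^{-1}|\sigma_t]$. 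From the construction underlying Theorem~\ref{thm:discrete-morse-theory} — and using that the grading $\eta$ is by length, so only finitely many collapses enter in each skeleton — the attaching map of $e_{\{s,t\}}$ in $Y$ is homotopic to the image under $f_\M$ of the boundary loop $[\Delta\sigma_s^{-1}]\cdot[\sigma_s]\cdot[\Delta]^{-1}$ of the $2$-simplex $[\Delta\sigma_s^{-1}|\sigma_s]$ of $Z=BA^+$. Since the boundary of a $2$-simplex $[y_1|y_2]$ is the loop $[y_1]\cdot[y_2]\cdot[y_1y_2]^{-1}$, collapsing the matched pair $[y_1|y_2]\to[y_1y_2]$ replaces the edge $[y_1y_2]$ by the path $[y_1]\cdot[y_2]$; hence $f_\M$ sends $[\beta]\mapsto[\beta\Delta_{I(\beta)}^{-1}]\cdot[\Delta_{I(\beta)}]$ for $\beta\notin\D$, and $[\Delta]\mapsto[\Delta\sigma_t^{-1}]\cdot[\sigma_t]$.

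The combinatorial heart is the following claim, proved by induction on $k$: for $\{u,v\}=\{s,t\}$ and $1\le k\le m-1$, the $1$-cell $[\pr{\sigma_u}{\sigma_v}{k}]$ is carried by $f_\M$ to the edge path $\pr{e_{\{u\}}}{e_{\{v\}}}{k}$. The case $k=1$ is the essential cell $[\sigma_u]\mapsto e_{\{u\}}$. For $2\le k\le m-1$ the element $\pr{\sigma_u}{\sigma_v}{k}$ has length $k\notin\{1,m\}$, so it is not in $\D$; and $I(\pr{\sigma_u}{\sigma_v}{k})$ is the singleton $\{w\}$ with $\sigma_w$ its last letter, because the last letter is a generator right divisor while a second one would make $\pr{\sigma_u}{\sigma_v}{k}$ a right common multiple of $\{\sigma_s,\sigma_t\}$, forcing, by Lemma~\ref{lemma:fundamental-element-as-common-multiple}, length $\ge\ell(\Delta)=m$. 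Hence $f_\M$ sends $[\pr{\sigma_u}{\sigma_v}{k}]$ to $[\pr{\sigma_u}{\sigma_v}{k-1}]\cdot[\sigma_w]$, which by the inductive hypothesis equals $\pr{e_{\{u\}}}{e_{\{v\}}}{k-1}\cdot e_{\{w\}}=\pr{e_{\{u\}}}{e_{\{v\}}}{k}$. Now $\Delta\sigma_s^{-1}$ and $\Delta\sigma_t^{-1}$ are themselves alternating words of length $m-1$ — one beginning with $\sigma_s$ and the other with $\sigma_t$, which is which depending on the parity of $m$ — so the claim, applied to all three edges of the triangle $[\Delta\sigma_s^{-1}|\sigma_s]$, computes $f_\M$ on its boundary loop.

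Finally one assembles the pieces. For $m$ even the attaching map of $e_{\{s,t\}}$ comes out as $\pr{e_{\{t\}}}{e_{\{s\}}}{m-1}\cdot e_{\{s\}}\cdot(\pr{e_{\{s\}}}{e_{\{t\}}}{m})^{-1}=\pr{e_{\{t\}}}{e_{\{s\}}}{m}\,\pr{e_{\{t\}}^{-1}}{e_{\{s\}}^{-1}}{m}$, which after reversing the orientation is $\pr{e_{\{s\}}}{e_{\{t\}}}{m}\,\pr{e_{\{s\}}^{-1}}{e_{\{t\}}^{-1}}{m}$; for $m$ odd it comes out as $\pr{e_{\{s\}}}{e_{\{t\}}}{m-1}\cdot e_{\{s\}}\cdot(\pr{e_{\{t\}}}{e_{\{s\}}}{m})^{-1}=\pr{e_{\{s\}}}{e_{\{t\}}}{m}\,\pr{e_{\{t\}}^{-1}}{e_{\{s\}}^{-1}}{m}$, already in the stated form — so in both cases we recover the asserted boundary curve. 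I expect the main obstacle to be the second step: making precise, from the construction behind Theorem~\ref{thm:discrete-morse-theory}, that the attaching map of a Morse cell is the boundary of the corresponding essential cell pushed through the matched collapses; the remaining work — identifying the matched pairs, the induction, and the parity/orientation bookkeeping — is routine.
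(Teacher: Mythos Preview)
Your proposal is correct and follows essentially the same approach as the paper: reduce to the dihedral case via naturality, identify the matched pairs explicitly, and run an induction on the length $k$ of the alternating words to track how the Morse collapse rewrites the boundary edges. The only cosmetic differences are that the paper fixes the opposite order $s>t$ (so the essential $2$-cell is $[\product_s^{m-1}\,|\,\sigma_t]$ rather than your $[\Delta\sigma_s^{-1}\,|\,\sigma_s]$) and organizes the induction as two parallel statements about the boundaries of the $2$-cells $[\product_t^k\,|\,\sigma_s]$ and $[\product_s^k\,|\,\sigma_t]$ in partially collapsed complexes $Y^{<c}$, rather than your single statement about where $f_\M$ sends the $1$-cells $[\pr{\sigma_u}{\sigma_v}{k}]$; the concern you flag about making the collapse-rewriting precise is handled in the paper exactly by this $Y^{<c}$ device, but the underlying computation is identical to yours.
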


\begin{proof}
  By Remark \ref{rmk:naturality-reduced-complex} it is sufficient to treat the case $S=\{s,t\}$, so that $Y$ consists only of one $0$-cell $\smash{e_\varnothing}$, two $1$-cells $\smash{e_{\{s\}},e_{\{t\}}}$ and one $2$-cell $\smash{e_{\{s,t\}}}$.
  Suppose that $s>t$ (in the other case the result is the same but with reversed orientation).
  Set
  \[  \prp{a}{b}{k} = \begin{cases}
                   \pr{a}{b}{k} & \text{if $k$ is even}, \\
                   \pr{b}{a}{k} & \text{if $k$ is odd}. \\
                 \end{cases} \\ \]
  Moreover, set
  \begin{IEEEeqnarray*}{rCl}
    \product_s^k &=& \prp{\sigma_t}{\sigma_s}{k}, \\
    \product_t^k &=& \prp{\sigma_s}{\sigma_t}{k}. \\
  \end{IEEEeqnarray*}
  Essentially $\product_s^k$ is a product of $k$ alternating elements ($\sigma_s$ or $\sigma_t$) ending with $\sigma_s$, and $\product_t^k$ is the same but ending with $\sigma_t$. For example, $\product_s^4 = \sigma_t\sigma_s\sigma_t\sigma_s$. Set also $m=m_{s,t}$. The cells $\smash{e_\varnothing}$, $\smash{e_{\{s\}}}$, $\smash{e_{\{t\}}}$ and $\smash{e_{\{s,t\}}}$ correspond to the $\M$-essential cells of $Z$, which are the following:
  \begin{IEEEeqnarray*}{lCl}
    c_\varnothing &=& [\,], \\
    c_{\{s\}} &=& [\sigma_s], \\
    c_{\{t\}} &=& [\sigma_t], \\
    c_{\{s,t\}} &=& [\product_s^{m-1}|\sigma_t].
  \end{IEEEeqnarray*}
  
  If $c$ is a cell of $Z$, call $\M^{<c}$ the matching on $Z$ given by
  \[ \M^{<c} = \{ (c_1 \to c_2) \in \M \mid c_1 < c \text{ and } c_2 < c \}, \]
  where the inequalities on the right hand side are meant in the partial order induced by the acyclic graph $G_Z^\M$.
  Since $\M^{<c}$ is also an acyclic matching on $Z$, compatible with the compact grading $\eta$, we can consider the complex $Y^{<c}$ obtained collapsing $Z$ along the matching $\M^{<c}$.
  For simplicity, we will call a cell of some $Y^{<c}$ with the same name as the corresponding $\M^{<c}$-essential cell in $Z$.
  
  We want to prove by induction on $k$ the following two assertions:
  \begin{enumerate}[(i)]
    \item the boundary curve of the 2-cell $c=[\product_t^k|\sigma_s]$ in $Y^{<c}$ is
    \[ \prp{[\sigma_t]}{[\sigma_s]}{k+1}\, [\product_s^{k+1}]^{-1} \]
    for $1\leq k \leq m-1$;
    \item the boundary curve of the 2-cell $c=[\product_s^k|\sigma_t]$ in $Y^{<c}$ is
    \[ \prp{[\sigma_s]}{[\sigma_t]}{k+1}\, [\product_t^{k+1}]^{-1} \]
    for $1\leq k \leq m-2$.
  \end{enumerate}
  
  The base steps are for $k=1$. Start from case (i). We have $c=[\sigma_t|\sigma_s]$, whose boundary in $Z$ is given by $[\sigma_t][\sigma_s][\sigma_t\sigma_s]^{-1}$. The 1-cells $[\sigma_t]$ and $[\sigma_s]$ are $\M$-essential, whereas the 1-cell $[\sigma_t\sigma_s]$ is matched with $c$. Therefore all these 1-cells are $\M^{<c}$-essential.
  This means that the boundary of $c$ in $Y^{<c}$ is also given by $[\sigma_t][\sigma_s][\sigma_t\sigma_s]^{-1}$.
  Case (ii) is similar: we have $c=[\sigma_s|\sigma_t]$, and its boundary in $Z$ is $[\sigma_s][\sigma_t][\sigma_s\sigma_t]^{-1}$. This is also the boundary in $Y^{<c}$, because $[\sigma_s\sigma_t]$ is matched with $c$ (this wouldn't be true for $m=2$, but such case is excluded by the condition $k\leq m-2$).
  
  We want now to prove case (i) for $2\leq k \leq m-1$.
  We have $c=[\product_t^k|\sigma_s]$, whose boundary in $Z$ is given by $[\product_t^k][\sigma_s][\product_s^{k+1}]^{-1}$.
  The 1-cell $[\sigma_s]$ is $\M$-essential, so it is in particular $\M^{<c}$-essential.
  The 1-cell $[\product_s^{k+1}]$ is matched with $c$ in $\M$, so it is $\M^{<c}$-essential.
  Finally the 1-cell $[\product_t^k]$ is matched with $c'=[\product_s^{k-1}|\sigma_t]$, whose boundary in $Y^{<c'}$ is $\prp{[\sigma_s]}{[\sigma_t]}{k}\, [\product_t^{k}]^{-1}$ by induction.
  Since $c' < c$ in the partial order induced by $G_Z^\M$, the boundary of $c$ in $Y^{<c}$ is given by
  \[ \prp{[\sigma_s]}{[\sigma_t]}{k} \, [\sigma_s] \, [\product_s^{k+1}]^{-1} = \prp{[\sigma_t]}{[\sigma_s]}{k+1} \, [\product_s^{k+1}]^{-1}. \]
  See Figure \ref{fig:lemma-general-collapse} for a picture of general cell attachments after a Morse collapse on the $2$-skeleton.
  See the left part of Figure \ref{fig:lemma} for a picture of case (i).
  
  \begin{figure}
    \begin{center}
      \includegraphics{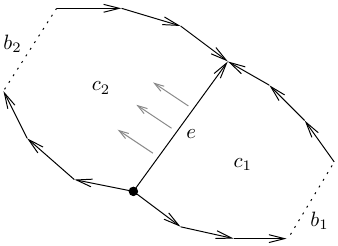}
    \end{center}
    \caption{Morse collapse on the $2$-skeleton.
    If two $2$-cells $c_1$ and $c_2$ have boundary curves $b_1e^{-1}$ and $b_2e^{-1}$ (where $e$ is a common $1$-cell, whereas $b_1$ and $b_2$ are sequences of $1$-cells), then collapsing $e$ with $c_2$ leaves a bigger cell $c_1$ with boundary curve $b_1b_2^{-1}$.}
    \label{fig:lemma-general-collapse}
  \end{figure}
  
  \begin{figure}
    \begin{center}
      \includegraphics{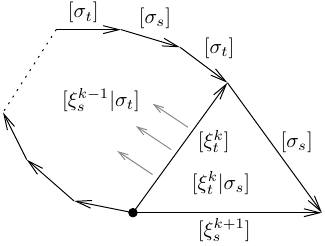} \qquad
      \includegraphics{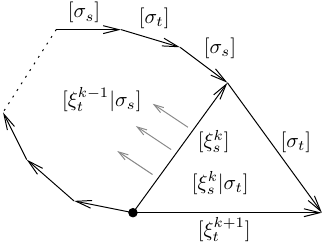}
    \end{center}
    \caption{On the left: the induction step, case (i); on the right: the induction step, case (ii). The boundary curve for all the 2-cells is denoted clockwise, starting from the black vertex.
    The light arrows indicate the Morse collapse.}
    \label{fig:lemma}
  \end{figure}

  We finally want to prove case (ii) for $2\leq k \leq m-2$.
  We have $c=[\product_s^k|\sigma_t]$, whose boundary in $Z$ is given by $[\product_s^k][\sigma_t][\product_t^{k+1}]^{-1}$.
  The 1-cell $[\sigma_t]$ is $\M$-essential, so it is $\M^{<c}$-essential.
  The 1-cell $[\product_t^{k+1}]$ is matched with $c$ in $\M$ (notice that this is true only for $k<m-1$, because $\product_t^m = \product_s^m$), thus it is also $\M^{<c}$-essential.
  The only 1-cell left to analyze is $[\product_s^k]$, which is matched with $c'=[\product_t^{k-1}|\sigma_s]$, whose boundary in $Y^{<c'}$ is $\prp{[\sigma_t]}{[\sigma_s]}{k}\, [\product_s^{k}]^{-1}$ by induction. 
  Then the boundary of $c$ in $Y^{<c}$ is given by
  \[ \prp{[\sigma_t]}{[\sigma_s]}{k} \, [\sigma_t] \, [\product_t^{k+1}]^{-1} = \prp{[\sigma_s]}{[\sigma_t]}{k+1} \, [\product_t^{k+1}]^{-1}. \]
  See the right part of Figure \ref{fig:lemma} for a picture of case (ii).
  
  The induction argument is complete. To end the proof, consider now the $2$-cell $c_{\{s,t\}} = [\product_s^{m-1}|\sigma_t]$ of $Z$. Its boundary is $[\product_s^{m-1}][\sigma_t][\product_t^m]^{-1}$.
  The $1$-cell $[\sigma_t]$ is $\M$-essential.
  The $1$-cell $[\product_s^{m-1}]$ is matched with $c' = [\product_t^{m-2}|\sigma_s]$, whose boundary in $Y^{<c'}$ is $\prp{[\sigma_t]}{[\sigma_s]}{m-1}\, [\product_s^{m-1}]^{-1}$.
  The $1$-cell $[\product_t^m] = [\product_s^m]$ is matched with $c'' = [\product_t^{m-1}|\sigma_s]$, whose boundary in $Y^{<c''}$ is $\prp{[\sigma_t]}{[\sigma_s]}{m}\, [\product_s^{m}]^{-1}$.
  Therefore the boundary of $c_{\{s,t\}}$ in $Y$ is
  \begin{IEEEeqnarray*}{rCl}
    && \prp{[\sigma_t]}{[\sigma_s]}{m-1}\; [\sigma_t] \; \Big( \prp{[\sigma_t]}{[\sigma_s]}{m} \Big)^{-1} \\
    &=& \prp{[\sigma_t]}{[\sigma_s]}{m-1}\; [\sigma_t] \; \pr{[\sigma_s]^{-1}}{[\sigma_t]^{-1}}{m} \\
    &=& \prp{[\sigma_s]}{[\sigma_t]}{m} \; \pr{[\sigma_s]^{-1}}{[\sigma_t]^{-1}}{m}.
  \end{IEEEeqnarray*}
  If $m$ is even we are done.
  If $m$ is odd, reversing the orientation yields
  \[ \Big( \prp{[\sigma_s]}{[\sigma_t]}{m} \; \pr{[\sigma_s]^{-1}}{[\sigma_t]^{-1}}{m} \Big)^{-1} = \,
     \pr{[\sigma_s]}{[\sigma_t]}{m} \; \pr{[\sigma_t]^{-1}}{[\sigma_s]^{-1}}{m}, \]
  so we are also done.
\end{proof}

\begin{theorem}
  For any Coxeter matrix $M$ there exists a homotopy equivalence $\psi \colon Y \to \bsal(M)$ such that, for every subset $\F$ of $S^f$ closed under inclusion, the restriction $\psi|_{Y_\F}$ has image contained in $\bsal_\F(M)$ and
  \[ \psi|_{Y_\F} \colon Y_\F \to \bsal_\F(M) \]
  is also a homotopy equivalence.
  \label{thm:equivalence-salvetti-morse}
\end{theorem}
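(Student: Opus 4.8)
The plan is to construct $\psi$ one skeleton at a time. The construction rests on two facts. First, $Y$ and $\bsal(M)$ have their cells indexed by the \emph{same} set $\sf$, with $\dim e_T = \dim \bar C(T) = |T|$ for $T \in \sf$. Second, for every $T \in \sf$ the parabolic Artin group $A_T$ is of finite type, so that both Theorem~\ref{thm:conjecture-finite-type} and Corollary~\ref{cor:finite-type-classifying-space} apply to it: on one hand $\bsal_{T^f}(M)$ is canonically the Salvetti complex $\bsal(M_{T\times T})$ (a standard naturality property of the Salvetti complex, analogous to Remark~\ref{rmk:naturality-reduced-complex}), hence a $K(A_T,1)$ by Theorem~\ref{thm:conjecture-finite-type}; on the other hand, by Remark~\ref{rmk:naturality-reduced-complex} and Theorem~\ref{thm:discrete-morse-theory}, $Y_{T^f}$ is homotopy equivalent to $BA_T^+$, which is a $K(A_T,1)$ by Corollary~\ref{cor:finite-type-classifying-space}. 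In particular $\bsal_{T^f}(M)$ is aspherical and $|T|$-dimensional.

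I would build $\psi$ by induction on dimension, always arranging that $\psi(Y_\F^{(n)}) \subseteq \bsal_\F(M)$ for every $\F \subseteq \sf$ closed under inclusion. On the $0$- and $1$-skeleta there is no freedom: both complexes have one vertex, and both $1$-skeleta are the wedge of circles indexed by $S$; set $\psi(e_{\{s\}}) = \bar C(\{s\})$. On the $2$-skeleton, Lemma~\ref{lemma:boundary-2-cell} shows that the boundary word of $e_{\{s,t\}}$, read in the generators $e_{\{s\}}, e_{\{t\}}$, coincides up to orientation and starting point with the word $\pr{\sigma_s}{\sigma_t}{m_{s,t}} \, \pr{\sigma_t}{\sigma_s}{m_{s,t}}^{-1}$ spelled by the boundary of the $2m_{s,t}$-agon $\bar C(\{s,t\})$ of $\bsal(M)$; hence $\psi$ extends to a cellular homeomorphism of $2$-skeleta with $\psi(e_{\{s,t\}}) = \bar C(\{s,t\})$. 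For the inductive step, suppose $\psi$ is defined on $Y^{(n-1)}$ with $n \geq 3$, and let $T \in \sf$ with $|T| = n$. The attaching map $\varphi_T \colon S^{n-1} \to Y^{(n-1)}$ lands in the subcomplex spanned by the $e_R$ with $R \subsetneq T$, i.e.\ in the $(n-1)$-skeleton of $Y_{T^f}$, so $\psi \circ \varphi_T$ lands in $\bsal_{T^f}(M)$. Since $\bsal_{T^f}(M)$ is aspherical and $n$-dimensional, $\pi_{n-1}(\bsal_{T^f}(M)) = 0$, so $\psi \circ \varphi_T$ is null-homotopic; choose a null-homotopy and use it to extend $\psi$ over $e_T$, with image in $\bsal_{T^f}(M)$. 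Doing this for all such $T$ extends $\psi$ to $Y^{(n)}$ while preserving the subcomplex property, and since $\dim Y \leq |S|$ the process terminates.

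It remains to check that $\psi|_{Y_\F} \colon Y_\F \to \bsal_\F(M)$ is a homotopy equivalence. Take first $\F = T^f$ with $T \in \sf$: then $\psi|_{Y_{T^f}}$ is a map between two aspherical complexes which on $2$-skeleta is the identification realizing the standard Artin presentation of $A_T$ on both sides; since cells of dimension $\geq 3$ do not affect $\pi_1$, it induces an isomorphism on fundamental groups and is therefore a homotopy equivalence. For a general $\F$ closed under inclusion, $S$ being finite gives the finite decompositions into subcomplexes $Y_\F = \bigcup_{T \in \F} Y_{T^f}$ and $\bsal_\F(M) = \bigcup_{T \in \F} \bsal_{T^f}(M)$, with $\psi$ carrying the first cover to the second; moreover each intersection $Y_{T_1^f} \cap \dots \cap Y_{T_k^f}$ equals $Y_{(T_1 \cap \dots \cap T_k)^f}$, and likewise on the Salvetti side, so $\psi$ restricts to a homotopy equivalence on every such intersection by the case already done. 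A standard inductive application of the gluing lemma for pushouts along cofibrations then shows $\psi|_{Y_\F}$ is a homotopy equivalence, and $\F = \sf$ gives the theorem.

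The crux is the inductive step in dimensions $\geq 3$: a priori one has no control over $\pi_{n-1}$ of the $(n-1)$-skeleton of $Y_{T^f}$, so it is not obvious that $\psi \circ \varphi_T$ extends over $e_T$, nor that, once extended, $\psi$ remains a homotopy equivalence on the finite-type pieces. Both issues disappear because the relevant target $\bsal_{T^f}(M)$ is aspherical --- which is precisely Deligne's theorem (Theorem~\ref{thm:conjecture-finite-type}) applied to the finite-type group $A_T$ --- and $|T|$-dimensional: asphericity annihilates the extension obstruction, and the fact that $Y_{T^f} \simeq BA_T^+$ is itself a $K(A_T,1)$ (Corollary~\ref{cor:finite-type-classifying-space}) forces any extension to be an equivalence on $Y_{T^f}$; the general $\F$ is then formal. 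A secondary point requiring care is that the identifications $\bsal_{T^f}(M) \cong \bsal(M_{T\times T})$ and $Y_{T^f} \cong (\text{reduced complex of } M_{T\times T})$ are the natural ones, which is the content of (the analogue of) Remark~\ref{rmk:naturality-reduced-complex}.
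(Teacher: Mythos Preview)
Your argument is correct and uses the same essential ingredients as the paper: Lemma~\ref{lemma:boundary-2-cell} to pin down the $2$-skeleton, and the asphericity of the finite-type pieces $Y_{T^f}$ and $\bsal_{T^f}(M)$ (via Corollary~\ref{cor:finite-type-classifying-space} and Theorem~\ref{thm:conjecture-finite-type}) to handle cells of dimension $\geq 3$. The execution, however, is genuinely different.

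The paper does not construct $\psi$ alone and verify afterwards that it is an equivalence. Instead it builds $\psi$, a homotopy inverse $\psi'$, and homotopies $F\colon \psi'\psi \simeq \id$, $F'\colon \psi\psi' \simeq \id$ \emph{simultaneously}, extending all four maps one cell at a time along a maximal chain $\F_1\subset\F_2\subset\dots\subset\F_k=\sf$. At each step the cell $e_T$ is handled inside the pair $(Y_{T^f},Y_{T^f\setminus\{T\}})$; the extension of $\psi$ uses $\pi_{n-1}=0$ exactly as you do, but the extension of the homotopies requires the stronger fact $\pi_n(Y_{T^f})=\pi_n(\bsal_{T^f}(M))=0$, since one must fill an $(n+1)$-cell $e_T\times[0,1]$. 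The naturality in $\F$ is then tracked by hand through the induction.

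Your route is more economical: you build only $\psi$, observe that by construction $\psi(e_T)\subseteq \bsal_{T^f}(M)$, and then invoke Whitehead's theorem on each $Y_{T^f}$ (both sides being $K(A_T,1)$'s with $\psi$ the identity on $\pi_1$) followed by an inductive application of the gluing lemma over the cover $\{Y_{T^f}\}_{T\in\F}$, using that $Y_{T_1^f}\cap\dots\cap Y_{T_k^f}=Y_{(T_1\cap\dots\cap T_k)^f}$. This trades the paper's explicit construction of the inverse and homotopies for an appeal to a standard (but nontrivial) model-categorical fact. The paper's approach yields slightly more: an explicit $\psi'$ and explicit homotopies, all respecting every $Y_\F$ and $\bsal_\F(M)$. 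Your approach is shorter and makes the logical dependence on Deligne's theorem more transparent.
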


\begin{proof}  
  We are going to construct simultaneously the map $\psi$ of the statement and its homotopy inverse $\psi' \colon \bsal(M) \to Y$, together with homotopies $F\colon Y\times [0,1] \to Y$ between $\psi' \circ \psi$ and $\id_Y$, and $F'\colon \bsal(M)\times [0,1] \to \bsal(M)$ between $\psi \circ \psi'$ and $\smash{\id_{\bsal(M)}}$.
  
  Consider a chain $\{\varnothing\} = \F_1\subseteq \F_2\subseteq \dots \subseteq \F_k = S^f$ of subsets of $S^f$ closed under inclusion and such that $|\F_{i+1}| = |\F_i| + 1$ for all $i$.
  We will define $\psi$, $\psi'$, $F$ and $F'$ recursively on the subcomplexes $Y_{\F_i}$ and $\bsal_{\F_i}(M)$, starting with the subcomplexes $Y_{\F_1}$ and $\bsal_{\F_1}$ consisting only of one $0$-cell, and extending them one cell at a time.
  We will construct the maps in such a way that $\psi|_{Y_{\F_i}}$ has image contained in $\smash{\bsal_{\F_i}(M)}$, $\smash{\psi'|_{\bsal_{\F_i}(M)}}$ has image contained in $Y_{\F_i}$, and the same will hold for the homotopies $F$ and $F'$ for any time $t\in [0,1]$.
  Simultaneously we will prove by induction that, for any subset $\F\subseteq\F_i$ closed under inclusion,
  \begin{itemize}
   \item the constructed maps
    \[ \psi|_{Y_{\F_i}}\colon Y_{\F_i}\to \bsal_{\F_i}(M) \quad \text{and} \quad
    \psi'|_{\bsal_{\F_i}(M)}\colon \bsal_{\F_i}(M)\to Y_{\F_i}, \]
    restricted to $Y_\F$ and $\bsal_\F(M)$, have image contained in $\bsal_\F(M)$ and $Y_\F$, respectively;
   \item the constructed homotopies
    \[ F|_{Y_{\F_i}\times [0,1]} \colon Y_{\F_i}\times [0,1] \to Y_{\F_i} \quad \text{and} \quad F'|_{\bsal_{\F_i}(M) \times [0,1]} \colon \bsal_{\F_i} \times [0,1] \to \bsal_{\F_i}, \]
    restricted to $Y_\F \times [0,1]$ and $\bsal_\F(M)\times [0,1]$, have image contained in $Y_\F$ and $\bsal_\F(M)$, respectively.
  \end{itemize}
  In particular this means that the restrictions of $\smash{\psi|_{Y_{\F_i}}}$ and $\smash{\psi'|_{\bsal_{\F_i}(M)}}$ to $Y_\F$ and $\bsal_\F(M)$ are homotopy inverses to each other through the homotopies obtained by restricting $\smash{F|_{Y_{\F_i}\times [0,1]}}$ and $\smash{F'|_{\bsal_{\F_i}(M) \times [0,1]}}$ to the subspaces $Y_\F \times [0,1]$ and $\bsal_\F(M)\times [0,1]$, respectively.
  
  Define $\psi|_{Y_{\F_1}}$ sending the $0$-cell of $Y$ to the $0$-cell of $\bsal(M)$, and $\smash{\psi'|_{\bsal_{\F_1}(M)}}$ as its inverse. Moreover define the homotopies $\smash{F|_{Y_{\F_1}\times [0,1]}}$ and $\smash{F'|_{\bsal_{\F_1}(M) \times [0,1]}}$ as the constant maps.
  
  Assume now by induction to have already defined the maps $\smash{\psi|_{Y_{\F_i}}}, \smash{\psi'|_{\bsal_{\F_i}(M)}}$, $F|_{Y_{\F_i}\times [0,1]}$ and $F'|_{\bsal_{\F_i}(M) \times [0,1]}$ for some $i$.
  To simplify the notation, set:
  \begin{IEEEeqnarray*}{CCLCCCL}
   X &=& Y_{\F_{i+1}}, & \qquad & X' &=& \bsal_{\F_{i+1}}(M), \\
   A &=& Y_{\F_i}, & \qquad & A' &=& \bsal_{\F_i}(M), \\
   \vartheta &=& \psi|_{A}, & \qquad & \vartheta' &=& \psi|_{A'}, \\
   G &=& F|_{A\times [0,1]}, & \qquad & G' &=& F'|_{A' \times [0,1]}.
  \end{IEEEeqnarray*}
  Let $T$ be the only element of $\sf$ which belongs to $\F_{i+1}$ but not to $\F_i$.
  Moreover let $e^n$ and $f^n$ be the $n$-cells corresponding to $T$ in $X$ and $X'$, respectively.
  We want to extend $\vartheta$ to $e^n$, $G$ to $e^n\times [0,1]$, $\vartheta'$ to $f^n$ and $G'$ to $f^n\times [0,1]$.
  
  If $n=1$ we simply send homeomorphically $e^1$ to $f^1$, preserving the orientation, and $f^1$ to $e^1$ with the inverse homeomorphism; the homotopies $G$ and $G'$ are then extended being constant on the new cells.
  If $n=2$ we can apply Lemma \ref{lemma:boundary-2-cell} to observe that the boundary curve of $e^2$ in $X$ is the same (via $\vartheta$) as the boundary curve of $f^2$ in $X'$; then we extend $\vartheta$ sending $e^2$ to $f^2$ homeomorphically, preserving the boundary, and similarly we extend $\vartheta'$.
  The homotopies $G$ and $G'$ can be extended to the new cells by Corollary \ref{cor:homotopic-attaching-maps}.

  Now we are going to deal with the case $n\geq 3$.
  Consider the following subsets of $S^f$, which are closed under inclusion:
  \[ \F = \{ R\in S^f \mid R\subsetneq T\}, \quad \F^* = \F \cup \{T\}. \]
  Notice that $\F\subseteq \F_i$ because $\F_{i+1}$ is closed under inclusion and $T$ is the only element in $\F_{i+1}\setminus\F_i$.
  The sets $\F$, $\F^*$, $\F_i$, $\F_{i+1}$ are related as follows.
  \begin{center}
    \begin{tikzcd}[column sep=large]
    \F_i \arrow[hookrightarrow]{r}{\text{add $T$}}
    & \F_{i+1} \\
    \F \arrow[hookrightarrow]{r}{\text{add $T$}} \arrow[hookrightarrow]{u}
    & \F^* \arrow[hookrightarrow]{u}
    \end{tikzcd}
  \end{center}
  Set $\h A = Y_{\F}$, $\h X = Y_{\F^*}$, $\h A' = \bsal_{\F}(M)$ and $\h X' = \bsal_{\F^*}(M)$.
  Then we have the following inclusions of CW complexes.
  \begin{center}
    \begin{tikzcd}[column sep=1.8cm]
    A \arrow[hookrightarrow]{r}{\text{attach $e^n$}}
    & X \\
    \widehat{A} \arrow[hookrightarrow]{r}{\text{attach $e^n$}} \arrow[hookrightarrow]{u}
    & \widehat{X} \arrow[hookrightarrow]{u}
    \end{tikzcd}\qquad
    \begin{tikzcd}[column sep=1.8cm]
    A' \arrow[hookrightarrow,outer sep=-1pt]{r}{\text{attach $f^n$}}
    & X' \\
    \widehat{A}' \arrow[hookrightarrow,outer sep=-1pt]{r}{\text{attach $f^n$}} \arrow[hookrightarrow]{u}
    & \widehat{X}' \arrow[hookrightarrow]{u}
    \end{tikzcd}
  \end{center}
  Let $\varphi\colon S^{n-1}\to A$ be the attaching map of the cell $e^n$.
  By naturality of the Morse complex (Theorem \ref{thm:discrete-morse-theory}), $\h X$ is obtained from $\h A$ by attaching the same cell $e^n$, so the image of $\varphi$ is actually contained in $\h A$. Thus we have $\varphi\colon S^{n-1}\to \h A$.
  With the same argument we can deduce that the attaching map $\varphi'$ of $f^n$ has image contained in $\h A'$.
  Setting $M_1 = M|_{T\times T}$ we have that $M_1$ is a Coxeter matrix corresponding to a finite Coxeter group, because $T\in S^f$.
  The CW complex $\h X = Y_{\F^*} \simeq BA_T^+$ is a space of type $K(A_T,1)$ by Corollary \ref{cor:finite-type-classifying-space}. Similarly, $\smash{\h X' = \bsal_{\F^*}(M) \simeq \bsal(M_1)}$ is also a space of type $K(A_T,1)$ by Theorem \ref{thm:conjecture-finite-type}.
  
  By induction we know that $\vartheta\colon A \to A'$ and $\vartheta'\colon A' \to A$ are homotopy inverses to each other through the homotopies $G$ and $G'$, and that the restrictions $\lambda = \vartheta|_{\h A} \colon \h A \to \h A'$ and $\lambda' = \vartheta'|_{\h A'} \colon \h A' \to \h A$ are homotopy inverses to each other through the restricted homotopies $H = G|_{\h A \times [0,1]}\colon \h A \times [0,1] \to \h A$ and $H' = G'|_{\h A' \times [0,1]}\colon \h A' \times [0,1] \to \h A'$.
  
  Since $\pi_{n-1}(\h X) = 0$ and $\pi_{n-1}(\h X') = 0$ (basepoints are omitted since they are irrelevant here), the maps $\lambda$ and $\lambda'$ can be extended to maps
  \[ \tilde \lambda \colon \h X \to \h X', \quad \tilde \lambda' \colon \h X' \to \h X. \]
  Extend $H$ to the map $\bar H \colon \h A \times [0,1] \cup \h X \times \{0, 1\}\to \h X$ where $\bar H|_{\h X \times \{0\}} = \tilde \lambda' \circ \tilde \lambda \colon \h X \to \h X$ and $\bar H|_{\h X \times \{1\}} = \id_{\h X}$.
  Then attaching the $(n+1)$-cell $e^n \times [0,1]$ to $\h A \times [0,1] \cup \h X \times \{0, 1\}$ yields the space $\h X \times [0,1]$, and $\bar H$ can be extended to a map $\smash{\tilde H \colon \h X \times [0,1] \to \h X}$ because $\smash{\pi_n(\h X)=0}$.
  By construction, the map $\smash{\tilde H}$ is a homotopy between $\smash{\tilde \lambda' \circ \tilde \lambda}$ and the identity map of $\smash{\h X}$.
  Extend similarly $H'$ to a homotopy $\smash{\tilde H'}$ between $\smash{\tilde \lambda \circ \tilde \lambda'}$ and the identity map of $\smash{\h X'}$.
  
  Finally extend $\vartheta$ to $e^n$ by gluing it with $\tilde \lambda$ (these two maps coincide on $A \cap \h X = \h A$), extend $G$ to $e^n\times [0,1]$ by gluing it with $\tilde H$ (these two homotopies coincide on $A\times [0,1] \cap \h X\times [0,1] = \h A \times [0,1]$), and do the same for $\vartheta'$ and $G'$.
  Call $\tilde\vartheta$, $\tilde G$, $\tilde \vartheta'$ and $\tilde G'$ the extended maps.
  By construction $\tilde G$ is a homotopy between $\tilde\vartheta' \circ \tilde\vartheta$ and $\id_X$, and $\tilde G'$ is a homotopy between $\tilde\vartheta \circ \tilde\vartheta'$ and $\id_{X'}$.
  
  To complete our induction argument we only need to prove that, for any subset $\F\subseteq \F_{i+1}$ closed under inclusion, the restrictions $\tilde \vartheta|_{Y_\F}$ and $\smash{\tilde G|_{Y_\F\times [0,1]}}$ have image contained in $\bsal_\F(M)$ and $Y_\F$, respectively (the analogous property for $\tilde \vartheta'$ and $\tilde G'$ will hold similarly).
  If $T\not\in \F$ then $\F \subseteq \F_{i}$, so $\tilde\vartheta|_{Y_\F}$ and $\tilde G|_{Y_\F \times [0,1]}$ are restrictions of $\vartheta$ and $G$, and our claim follows by induction.
  So we can assume $T\in\F$.
  If we set $\F' = \F \setminus\{T\}$ then $\F' \subseteq \F_{i}$, so the claim holds for $\F'$ by induction.
  If $e^n$ is the cell corresponding to $T$, then by construction the restrictions $\tilde \vartheta|_{e^n}$ and $\tilde G|_{e^n\times [0,1]}$ have image contained in $\bsal_{\F^*}(M)$ and $Y_{\F^*}$, respectively, where $\F^* = \{ R \in \sf \mid R\subseteq T \}$. Since $\F$ is closed under inclusion and $T\in \F$, then $\F^* \subseteq \F$. Therefore the restrictions $\tilde \vartheta|_{Y_\F}$ and $\smash{\tilde G|_{Y_\F\times [0,1]}}$ have image contained in $\bsal_{\F'}(M) \cup \bsal_{\F^*}(M) = \bsal_\F(M)$ and $Y_{\F'} \cup Y_{\F^*} = Y_\F$, respectively.
\end{proof}

In her paper \cite{ozornova2017discrete} Ozornova found an algebraic complex that computes the homology of Artin groups through the algebraic version of discrete Morse theory, and left open the question of whether this complex is related to the complex that computes the cellular homology of the Salvetti complex.
The homotopy equivalence $\psi$ of Theorem \ref{thm:equivalence-salvetti-morse} induces isomorphisms
\[ \psi_* \colon H_k(Y_k, Y_{k-1}; \Z) \to H_k(\bsal(M)_k,\, \bsal(M)_{k-1};\, \Z) \]
between the two cellular chain complexes, for all dimensions $k$ (here the notation $C_k$ indicates the $k$-skeleton of the CW complex $C$).
By the naturality property proved in Theorem \ref{thm:equivalence-salvetti-morse}, $\psi_*$ sends (up to sign) the standard generators of $\smash{H_k(Y_k, Y_{k-1}; \Z)}$, namely the $k$-cells of $Y$, to the corresponding $k$-cells of $\bsal(M)$ seen as generators of $\smash{H_k(\bsal(M)_k,\, \bsal(M)_{k-1};\, \Z)}$. This can be easily seen by induction, attaching one cell at a time, and using the fact that $\psi$ is a homotopy equivalence before and after adding each cell.
Moreover, by naturality of the cellular boundary maps, the following diagram is commutative for all $k$:
\begin{center}
  \begin{tikzcd}
    H_k(Y_k, Y_{k-1}; \Z) \arrow{r}{d_k} \arrow{d}{\psi_*}
    & H_{k-1}(Y_{k-1}, Y_{k-2};\Z) \arrow{d}{\psi_*} \\
    H_k(\bsal(M)_k,\, \bsal(M)_{k-1};\, \Z) \arrow{r}{d'_k}
    & H_{k-1}(\bsal(M)_{k-1},\, \bsal(M)_{k-2};\, \Z).
  \end{tikzcd}
\end{center}
Therefore $\psi_*$ is an isomorphism of algebraic complexes.
In other words we have proved that Ozornova's algebraic complex coincides with the algebraic complex which computes the cellular homology of the Salvetti complex, as it was natural to expect since these two complexes have the same rank in all dimensions.

\section{Acknowledgements}

This work is based on the material of my master thesis, written under the supervision of Mario Salvetti. Therefore I would like to thank him for introducing me to the topic and for giving me good advice throughout my final year of master degree at the University of Pisa.

I would also like to thank the referee, for his careful reading and useful comments.

\bibliography{bibliography}{}
\bibliographystyle{amsalpha-abbr}

\end{document}